\definecolor{wine-stain}{rgb}{0.5,0,0}
\newcommand{\vardbtilde}[1]{\tilde{\raisebox{0pt}[1\height]{$\tilde{#1}$}}}
\newtheorem{thm}{Theorem}[section]
\newtheorem{cor}[thm]{Corollary}
\newtheorem{lem}[thm]{Lemma}
\newtheorem{prop}[thm]{Proposition}
\theoremstyle{definition}
\newtheorem{exa}[thm]{Example}
\newtheorem{defn}[thm]{Definition}
\theoremstyle{remark}
\newtheorem{rem}[thm]{Remark}
\numberwithin{equation}{section}
\newcommand{\BVk}{O_{\mathrm{BV},\mathbf{k}}}
\newcommand{\hoco}{\mathrm{hocolim}}
\newcommand{\equiQ}{{\textsf{Q}}}
\newcommand{\shift}{[1]}
\begin{document}

\title[]{ Trace map on chiral Weyl algebras}%
 \author{Zhengping Gui}

  \address{
Z. Gui: International Centre for Theoretical Physics, Trieste, Italy;
}
\email{zgui@ictp.it}

\thanks{}%
\subjclass{}%
\keywords{}%

\begin{abstract}
We construct a trace map on the chiral homology of chiral Weyl algebra for any smooth Riemann surface. Our trace map can be viewed as a chiral version of the deformed HKR quasi-isomorphism. This also provides a mathematical rigorous construction of correlation function for symplectic bosons in physics. We calculate some examples of trace maps with one insertion and find they are closely related to the variation of analytic torsion for holomorphic bundles on Riemann surfaces.
\end{abstract}
\maketitle
\tableofcontents


\section{Introduction}
The Witten genus, originally introduced by Witten \cite{witten1987elliptic} during his investigation of two-dimensional nonlinear $\sigma$-models, has had a significant impact on various fields of mathematics (see for instance \cite{hirzebruch1992manifolds,segal1987elliptic}). In the context of this type of two-dimensional quantum field theory, the partition function is constructed through a path integral over the mapping space $\mathrm{Map}(E_{\tau},M)$ (maps from an elliptic curve $E_{\tau}$ to a smooth manifold $M$) and subsequently leads to the emergence of the Witten genus, employing localization techniques. It is natural to study the higher genus analog of the partition function. This is studied in  \cite{alvarez2002beyond}, where researchers have uncovered new cobordism invariants in the form of sections of certain line bundles defined over the Teichmüller space.

There exists an alternative mathematical approach to the Witten genus given by Costello \cite{costello2010geometric,costellogeometric}, which is a geometric construction based on his work on (Batalin-Vilkovisky) BV quantization and renormalization of effective field theories. The quantum field theory model he used is the so-called $\beta\gamma$-system which is a two-dimensional conformal field theory.  By applying the machinery of BV quantization to this theory, he derives the Witten genus as the resulting partition function. Since two-dimensional chiral conformal field theories can be studied mathematically using the language of chiral algebras developed by Beilinson and Drinfeld \cite{beilinson2004chiral}. It becomes natural to explore the relationship between this construction of the Witten genus and the notion of chiral homology—derived invariants arising from chiral algebras on Riemann surfaces.  This is pursued in \cite{gui2021elliptic}, combining Costello's work and the formalism of chiral algebras.

In \cite{gui2021elliptic}, a trace map is constructed within the context of chiral homology for the $\beta\gamma-bc$-system (a special case of chiral Weyl algebras) on an elliptic curve by employing the BV formalism. This construction can be regarded as a mathematically rigorous definition of the physical partition function with multiple non-local operator insertions. In this paper, we extend this construction to chiral Weyl algebras on arbitrary smooth Riemann surfaces. Since it includes the case of the higher genus $\beta\gamma$-system with a linear target, we can view this as the first step for constructing the higher-genus analog of the Witten genus through the framework of chiral algebras.

Let $X$ be a smooth Riemann surface and $E$ be a super holomorphic Hermitian vector bundle on $X$ equipped with an even symplectic pairing $\langle-,-\rangle:E\otimes E\rightarrow \omega_X$. Here $\omega_X$ is the canonical bundle of $X$. Consider the two-dimensional conformal field theory with the chiral Lagrangian $\int_X\langle \phi,\bar\partial{\phi}\rangle, \phi\in \Omega^{0,\bullet}(X,E)$. The algebraic structure of the quantum observables in this theory is captured by a chiral algebra $\mathcal{A}_E$. The chiral algebra $\mathcal{A}_E$ is called the chiral Weyl algebra which can be thought of as a generalization of $\beta\gamma-bc$-system chiral algebra. The algebra $O_{\mathrm{BV}}$ of zero modes is the polynomial functions on the harmonic fields $\mathbb{H}(X,E)\subset \Omega^{0,\bullet}(X,E)$ which is a BV algebra with a BV operator $\Delta_{\mathrm{BV}}$. Its BV structure comes from the (-1)-symplectic pairing on $\mathbb{H}(X,E)$ given by $\int_X\langle-,-\rangle$. Our main result is the following theorem.

\begin{thm}[=Theorem \ref{MainTheorem510} ]\label{MainThmIntro}
    There is a quasi-isomorphism
    $$
    \mathrm{Tr}_{\mathcal{A}_E}:(\tilde{C}^{\mathrm{ch}}(X,\mathcal{A}_E)_{\mathcal{Q}},d^{\mathrm{ch}}_{\mathcal{A}_E})\rightarrow (O_{\mathrm{BV}},-\Delta_{\mathrm{BV}}).
    $$
    Here $(\tilde{C}^{\mathrm{ch}}(X,\mathcal{A}_E)_{\mathcal{Q}},d^{\mathrm{ch}}_{\mathcal{A}_E})$ is a chiral chain complex constructed by Beilinson and Drinfeld \cite{beilinson2004chiral}. The chiral homology of $\mathcal{A}_E$ is defined to the homology groups of this complex.
\end{thm}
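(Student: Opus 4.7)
The plan is to extend the construction of \cite{gui2021elliptic} from elliptic curves to arbitrary smooth Riemann surfaces, realizing the trace as the result of integrating out the non-harmonic modes of the free BV theory with action $\int_X\langle\phi,\bar\partial\phi\rangle$. Choose Hermitian metrics on $X$ and on $E$ compatible with $\langle-,-\rangle$; this produces a Hodge decomposition $\Omega^{0,\bullet}(X,E) = \mathbb{H}(X,E) \oplus \bar\partial\Omega^{0,\bullet-1}(X,E) \oplus \bar\partial^{\ast}\Omega^{0,\bullet+1}(X,E)$. Let $P$ be the $\bar\partial$-propagator, i.e.~the Schwartz kernel of the partial inverse of $\bar\partial$ vanishing on harmonics, and let $\Pi_{\mathbb{H}}$ be the kernel of the harmonic projection. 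Both are sections of a suitable bundle on $X\times X$ with a controlled short-diagonal singularity.

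For each finite set $S$, I would build a kernel $\mathcal{K}_S$ on $X^S\setminus\Delta_S$ as a finite sum over Wick-type graphs whose internal edges carry $P$ and whose external legs carry $\Pi_{\mathbb{H}}$, and define $\mathrm{Tr}_{\mathcal{A}_E}$ on a chiral chain over $S$ by pairing with $\mathcal{K}_S$ and integrating over $X^S$. The output lives in $O_{\mathrm{BV}}$, viewed as polynomial functions on $\mathbb{H}(X,E)$. Convergence of these singular integrals is controlled, as in \cite{gui2021elliptic}, by the compatibility between the pole structure of chiral chains along the diagonals and the short-distance asymptotics of $P$; the absence of a closed-form propagator in higher genus is handled by decomposing $P$ into a local parametrix for $\bar\partial$ plus a smooth Green kernel, which affects neither the singular analysis nor the diagonal residues.

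The chain-map identity $\mathrm{Tr}_{\mathcal{A}_E}\circ d^{\mathrm{ch}}_{\mathcal{A}_E} = -\Delta_{\mathrm{BV}}\circ \mathrm{Tr}_{\mathcal{A}_E}$ is the quantum master equation for $\mathcal{K}_S$ and will be the principal technical step. The Dolbeault part of $d^{\mathrm{ch}}_{\mathcal{A}_E}$, after integration by parts on $X^S$, produces distributional terms supported on the small diagonals; the propagator identity $\bar\partial P = \mathrm{id}-\Pi_{\mathbb{H}}$ converts these into two classes of residues. The non-harmonic class is matched term-by-term against the chiral operator product expansion of $\mathcal{A}_E$, while the harmonic class is identified with $-\Delta_{\mathrm{BV}}$ using that the $(-1)$-shifted symplectic pairing on $\mathbb{H}(X,E)$ is precisely the restriction of $\int_X\langle-,-\rangle$. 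I expect this to be the hardest part: the book-keeping of boundary and anomaly contributions is delicate on a general Riemann surface, since there is no translation symmetry and one must systematically use that the ambiguity in $P$ is smooth and hence does not contribute new residues.

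For the quasi-isomorphism, I would filter both sides by the number of propagator edges, equivalently by loop order in the Feynman expansion of $\mathcal{K}_S$. On the $E_0$-page the trace map reduces to a classical chiral HKR-type map from the chiral chains of the commutative chiral algebra generated by the jets of $E$ into polynomial functions on $\mathbb{H}(X,E)$, equipped with the differential induced by $\bar\partial$ on $\Omega^{0,\bullet}(X,E)$. An explicit contracting homotopy built from $\bar\partial^{\ast}$, together with the Hodge identification $H^{\bullet}(\Omega^{0,\bullet}(X,E))\cong \mathbb{H}(X,E)$, shows this associated-graded map is a quasi-isomorphism; since the filtration is bounded in each chain degree, a standard convergent spectral-sequence argument then upgrades the statement to the full complex.
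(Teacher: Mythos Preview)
Your outline follows the genus-one construction of \cite{gui2021elliptic} closely, but it misses the one point that the paper identifies as the \emph{main difficulty} in passing to higher genus: the chiral Weyl algebra $\mathcal{A}_E$ is a nontrivial bundle with a complicated coordinate-change law, so there is no global identification $\mathcal{A}_E\simeq\mathrm{Sym}\,E_{\mathcal{D}}$ and hence no global way to speak of ``Wick-type graphs whose internal edges carry $P$'' acting on sections of $\mathcal{A}_E^{\boxtimes S}$. Your kernel $\mathcal{K}_S$ is defined in terms of contractions of linear fields, but decomposing an element of $\mathcal{A}_E$ into a normal-ordered product of linear fields is coordinate-dependent (the normal ordering itself changes under coordinate change). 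Your proposal does not explain how the pairing of $\mathcal{K}_S$ with a global chiral chain is well defined; the remark that the propagator splits as a local parametrix plus a smooth remainder addresses the wrong issue --- the problem is not the propagator but the bundle $\mathcal{A}_E$.

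The paper's fix is to bypass this entirely by using the presentation of $\mathcal{A}_E$ as a twisted chiral envelope $\mathscr{U}(\mathcal{L})^{\flat}$ with $\mathcal{L}=E_{\mathcal{D}}$. Any section of $\mathcal{A}_E$ is first written (using a partition of unity) as an iterated chiral product $\vec{\mu}_{\mathcal{A}}(\tilde v)$ of sections $\tilde v$ of $\mathcal{L}^{\flat\boxtimes n}(*\Delta)$; on linear fields the Feynman-diagram map $e^{\partial_P}$ is manifestly global because $\partial_P$ is built from the symplectic pairing $E\otimes E\to\omega_X$. One then sets $\mathcal{W}^{\mathbf v}(v)=\vec{\mu}_{\mathrm{Sym}}(e^{\partial_P}\tilde v)$ and proves, via a chiral-algebra reformulation of the Wick theorem, that this is independent of the chosen presentation $\tilde v$ and agrees locally with $e^{\mathfrak{Q}_{\mathrm{reg}}}\tau^z_U$. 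The chain-map identity then reduces to the trivial Lie$^*$ case, and the renormalization of singular integrals is absorbed into the Beilinson--Drinfeld trace $\mathrm{tr}_\omega$ on the unit chiral algebra rather than being done by hand. Your spectral-sequence argument for the quasi-isomorphism is essentially the one the paper invokes (it cites \cite{gui2021elliptic}, Theorem~3.18), so that part is fine.
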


When $X$ is an elliptic curve and E is a trivial vector bundle, the theorem mentioned above has been established in \cite{gui2021elliptic}. The main difficulty in carrying out this construction to general Riemann surfaces is that the chiral algebra becomes a nontrivial vector bundle with a complicated coordinate change formula. The Feynman diagram needs to be constructed with more care. Our construction is based on the relation between Lie* algebra and its chiral envelope. This construction has the potential to be extended to higher-dimensional chiral algebras introduced in \cite{francis2012chiral}.

As suggested in \cite{gui2021elliptic}, at least from the algebraic index theory point of view, the chiral chain complex of a chiral algebra plays a role similar to the Hochschild chain complex of an associative algebra. Parallel to the fact that the usual Weyl algebra arises as a deformed commutative polynomial algebra, the chiral Weyl algebra $\mathcal{A}_E$ is a deformation of the commutative chiral algebra $\mathrm{Sym}\  E_{\mathcal{D}}$ where $ E_{\mathcal{D}}=E\otimes_{\mathcal{O}_X}\mathcal{D}_X$. Our trace map in Theorem \ref{MainThmIntro} can be viewed as a deformed HKR quasi-isomorphism. Let us explain this in a bit more detail. In \cite[pp342, Section 4.6.2]{beilinson2004chiral}, Beilinson and Drinfeld prove that there is an isomorphism in a certain derived category that closely resembles the HKR quasi-isomorphism
$$
\mathrm{HKR}_{\mathrm{ch}}:\mathrm{Sym}\left(R\Gamma_{\mathrm{DR}}(X,E_{\mathcal{D}})[1]\right)\xrightarrow{\sim} C^{\mathrm{ch}}\left(X,\mathrm{Sym}\ (E_{\mathcal{D}})\right).
$$
Here $R\Gamma_{\mathrm{DR}}(X,-)$ is a complex computing the de Rham cohomology of a right $\mathcal{D}_X$-module and $C^{\mathrm{ch}}(X,-)$ is a functorial chain complex that computes the chiral homology (for example, we can take $C^{\mathrm{ch}}(X,\mathcal{A}_E)$ to be $(\tilde{C}^{\mathrm{ch}}(X,\mathcal{A}_E)_{\mathcal{Q}},d^{\mathrm{ch}}_{\mathcal{A}_E})$). While the right-hand side can be viewed as a chiral version of the Hochschild complex, the left-hand side looks like the space differential forms on $(H^1(X,E))^{\vee}$ as $R^0\Gamma_{\mathrm{DR}}(X,E_{\mathcal{D}})=H^0(X,E)=\left(H^1(X,E)\right)^{\vee}=\left(R^1\Gamma_{\mathrm{DR}}(X,E_{\mathcal{D}})\right)^{\vee}$.  Beilinson and Drinfeld introduce a chiral PBW filtration on the chiral Weyl algebra $\mathcal{A}_E$ and show that $\mathrm{gr}.\mathcal{A}_E\simeq \mathrm{Sym}(E_{\mathcal{D}})$. The above quasi-isomorphism gets deformed to
$$
\mathrm{HKR}_{\mathrm{ch}}^{\mathbf{deformed}}:(\mathrm{Sym}\left(R\Gamma_{\mathrm{DR}}(X,E_{\mathcal{D}})[1]\right),-\Delta_{\mathrm{BV}})\xrightarrow{\sim} C^{\mathrm{ch}}\left(X,\mathcal{A}_E\right).
$$
The BV structure on the left-hand side comes from the (-1)-shifted symplectic pairing on $R\Gamma_{\mathrm{DR}}(X,E_{\mathcal{D}})$ defined by $\int_X\langle-,-\rangle:R\Gamma_{\mathrm{DR}}(X,E_{\mathcal{D}})\times R\Gamma_{\mathrm{DR}}(X,E_{\mathcal{D}})\rightarrow R\Gamma(X,\omega)\rightarrow \mathbb{C}$. Our trace map in Theorem \ref{MainThmIntro} can be seen as an explicit chain-level realization of this quasi-isomorphism. We leave the investigation of the general chiral HKR theorem for a future study.

Chiral Weyl algebra is also interesting on its own. In cases where it is purely even, it is known as symplectic bosons in physics literature ($\beta\gamma$-system is a special case of symplectic bosons). Since the symplectic chiral boson algebra is constructed from a holomorphic vector bundle. It is interesting to study the variation of the partition function (chiral homology) along the moduli space of bundles. It is suggested in the physics literature \cite{gaiotto2019twisted} that the chiral homology, in this context, will manifest as a quantization of Gaiotto's Lagrangian within the Hitchin moduli space (introduced in \cite{gaiotto2019twisted}, see \cite{ginzburg2018gaiotto,hitchin2017spinors} for more details). This quantization is effectively realized through the (twisted) $\mathcal{D}$-module structure inherent to the chiral homology. The subsequent result suggests that our trace map serves as a useful tool for a more explicit understanding of this quantization.

Using the explicit formula of the trace map in Theorem \ref{MainThmIntro}, we express the variation of the analytic torsion as a trace map with one operator insertion.

\begin{thm}[=Theorem \ref{InsertionCurrent}]
Let $\{E_s\}_{s\in \mathcal{M}}$ be a holomorphic family of vector bundles parameterized by a complex manifold $\mathcal{M}$ with $H^0(X,E_s)=H^1(X,E_s)=0,\forall s\in \mathcal{M}$ and $\dim \mathcal{M}=\dim H^1(X,\mathrm{End}(E))$. Choose a local coordinate $\{s_{\mathbf{e}_i}\}_{i=1,\dots,\dim H^1(X,\mathrm{End}(E))}$ of $\mathcal{M}$ label by a basis $\{\mathbf{e}_i\}_{i=1,\dots,\dim H^1(X,\mathrm{End}(E))}$ of $H^1(X,\mathrm{End}(E))$. Then the variation of the analytic torsion of $E_s$ is equal to the trace map of a current $J_{\nu}$, that is, we have
$$
\frac{d T(E_s)}{d s_{[\nu]}}=\mathrm{Tr}_{\mathcal{A}_{E_s}}(J_{\nu}).
$$
The notation $T(E_s)$ stands for analytic torsion of $E_s$.
\end{thm}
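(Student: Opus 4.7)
The plan is to compute both sides of the asserted identity and match them. Because $H^0(X,E_s)=H^1(X,E_s)=0$, the harmonic space $\mathbb{H}(X,E_s)$ vanishes, so $O_{\mathrm{BV}}=\mathbb{C}$ with trivial BV operator; hence $\mathrm{Tr}_{\mathcal{A}_{E_s}}$ is a scalar-valued functional and the asserted equality is an equation of complex numbers.

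First I would identify the current $J_\nu$ explicitly. A class $[\nu]\in H^1(X,\mathrm{End}(E))$ is Kodaira--Spencer dual to the vector field $\partial/\partial s_{[\nu]}$ on $\mathcal{M}$ and corresponds to the infinitesimal bilinear deformation $\int_X \langle\phi,\nu(\phi)\rangle$ of the chiral Lagrangian. Representing $\nu$ by a Dolbeault $(0,1)$-form with values in $\mathrm{End}(E)$ and translating bilinear expressions in $\phi$ into composite fields of the chiral Weyl algebra, I would realize $J_\nu$ as a cycle with a single insertion in $\tilde{C}^{\mathrm{ch}}(X,\mathcal{A}_{E_s})_{\mathcal{Q}}$, and verify that its class is independent of the choice of representative.

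Next I would evaluate $\mathrm{Tr}_{\mathcal{A}_{E_s}}(J_\nu)$ by plugging this cycle into the explicit Feynman-diagram formula for the trace map supplied by Theorem \ref{MainThmIntro}. With only one vertex and no zero-mode legs available, the graph sum collapses to the one-loop diagram in which the two legs at $\nu$ are contracted by the analytic propagator $P_{E_s}$ of $\bar\partial$ on $E_s$; all other diagrams either require external harmonic modes (which do not exist) or carry a vanishing graph factor. The outcome is a regularized trace of the form $\mathrm{tr}(\nu\cdot G_{E_s})$, where $G_{E_s}$ denotes the Green's operator of the $\bar\partial$-Laplacian $\Box_{E_s}$.

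Finally, the left-hand side is computed by a standard Kodaira--Spencer calculation: $\delta\bar\partial_{E_s}/\delta s_{[\nu]}=\nu$ with the chosen representative, and the elementary identity $\delta\log\det{}'\Box_{E_s}=\mathrm{tr}(\delta\Box_{E_s}\cdot G_{E_s})$ together with $\Box_{E_s}=[\bar\partial_{E_s},\bar\partial_{E_s}^{*}]$ reproduces precisely $\mathrm{tr}(\nu\cdot G_{E_s})$, up to the overall signs built into $T(E_s)=\prod_q (\det{}'\Box_{E_s,q})^{(-1)^q q/2}$. The main obstacle I anticipate is reconciling the two regularization schemes: the chiral trace uses a point-splitting and counter-term prescription intrinsic to the BV formalism, whereas $T(E_s)$ is defined spectrally via zeta-regularization. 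The natural way to bridge them is to pass through a common heat-kernel intermediate and show that the difference of counter-terms is $s$-independent, so that it drops out upon differentiation in $s_{[\nu]}$.
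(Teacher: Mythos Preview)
Your outline is close in spirit to the paper's, but there is one genuine missing idea and one mis-identification that would make the argument fail as written.

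The missing idea is the global construction of $J_\nu$. The naive bilinear $\nu^i_{j,\bar z}\,d\bar z\cdot:\!\beta_{i,z}\gamma^j\!:\,dz$ is \emph{not} a well-defined section of $\Omega^{0,1}(X,\mathcal{A})$: under a change of local coordinate or holomorphic frame the normal-ordered product picks up an anomalous term (a first-order differential of the transition function), so ``translating bilinear expressions in $\phi$ into composite fields'' does not yield a chiral chain. The paper fixes this by adding a metric-dependent local counter-term $-\mathrm{tr}\big(\nu\cdot\rho h^{-1}\partial_z(h\rho^{-1})\,dz\big)$, and proves that the corrected $J_\nu$ is coordinate- and frame-independent. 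Without this step you do not have a cycle to feed into the trace map, and the later ``reconciliation of regularizations'' you worry about is in fact already built into the definition of $J_\nu$.

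The mis-identification is in the value of the one-loop diagram. The propagator in the trace map is the Szeg\"o kernel $P$ for $\bar\partial$, and the self-loop contribution comes from the \emph{regular part} of $P$ at the diagonal via the normal-ordering map $\mathcal{W}^{\mathbf{v}}=e^{\mathfrak{Q}_{\mathbf{reg}}}\circ\tau^z_U$; concretely it is the coefficient $a_0(z;F)$ in the expansion $P(z_1,z_2)=\frac{\mathrm{Id}\,dz}{z_1-z_2}+a_0(z_1)+\cdots$, not the Green's operator of $\Box$. The paper's computation therefore yields the explicit local formula
\[
\mathrm{Tr}(J_\nu)=\frac{1}{\pi}\int_X\mathrm{tr}\big[\nu\cdot\big(a_0(z;F)-\rho h^{-1}\partial_z(h\rho^{-1})\big)\big]\,dz,
\]
and the identification with $\frac{d}{ds_{[\nu]}}T(E_s)$ is then made by invoking Fay's variation formula for analytic torsion rather than rederiving it via $\delta\log\det{}'\Box$ and heat kernels. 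So your anticipated obstacle is circumvented by citation, and the substantive work is the counter-term construction plus reading off $a_0$ from the self-loop.
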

Here the current $J_{\nu}\in \Omega^{0,1}(X,\mathcal{A})$ is the coupling of the affine Kac-Moody current with a specific form $\nu\in \Omega^{0,1}(X,\mathrm{End}(E))$ that describing the deformation of the bundle along the direction $\frac{\partial}{\partial s_{[\nu]}}$. Furthermore, the trace map is now valued in complex numbers. This is because we assume that $H^1(X,E)=H^0(X,E)=0$ which implies that $O_{\mathrm{BV}}\simeq \mathbb{C}$.

We discuss some related works and future directions. This work is motivated and influenced by \cite{cattaneo2000path,costello2010geometric,costellogeometric,kontsevich2003deformation,li2023vertex}. Although we study the different quantum field theory model from \cite{cattaneo2000path,kontsevich2003deformation}, we follow closely their method and philosophy. Namely, one can study quantum field theory mathematically using the BV formalism, and various intricate algebraic structures are encoded in quantum consistency conditions in this formalism, that is, they are governed by BV \textit{quantum master equation} (QME). Our primary motivation is to understand Costello's construction \cite{costello2010geometric,costellogeometric} better and explore the meaning of the higher genus Witten genera. Another motivation is to have a more conceptual understanding of the work \cite{li2023vertex} which connects the BV quantum master equation and BRST reduction of vertex algebras (on the complex plane or an elliptic curve). This is partly achieved in \cite{gui2021elliptic}, where the formulation of the above connection as well as its proof are compactly reformulated in terms of chiral algebras. It would be interesting to extend this story to higher genus Riemann surfaces using the results in the present paper.     Both works \cite{costellogeometric,li2023vertex} have very detailed discussions on the renormalization of Feynman integrals. As discovered in \cite{gui2021elliptic}, the renormalization procedure of two-dimensional chiral quantum field theories can be greatly simplified using the trace map on the unit chiral chain complex. We use the same treatment in the present paper. It is interesting to use our results and techniques in \cite{li2023vertex} to study the holomorphic bosonic string theory proposed by \cite{gwilliam2018holomorphic}, as this theory can be described as BRST reduction of a chiral Weyl algebra on a higher genus curve.

\section{Conventions}\label{Conventions}

\begin{itemize}
\item In this paper, we denote by $X$  a smooth complex algebraic curve and $\omega_X$ the canonical sheaf. And $X^I$  stands for the Cartesian product
    $$
    X^I=\underbrace{X\times\cdots\times X}_{I}
    $$ of $X$ , where $I$ is a finite index set $I$. We also denote by $\Delta_{I}$ the big diagonal in $X^I$
    $$
    \Delta_I=\bigcup_{i,j\in I}\Delta_{ij}, \quad \text{where}\quad \Delta_{ij}=\{(\dots,z_{k},\dots)\in X^I| k\in I, z_i=z_j\}.
    $$

  \item   We will work in the analytic category in this paper. For $\mathcal{D}_X$-modules (resp. $\mathcal{O}_X$-modules) on $X$, we mean analytic $\mathcal{D}_{X^{\mathrm{an}}}$-modules (resp. $\mathcal{O}_{X^{\mathrm{an}}}$-modules) on the analytification $X^{\mathrm{an}}$ and we omit the superscript $"\mathrm{an}"$ in this paper.

  \item Following the convention in \cite[pp.280]{beilinson2004chiral}, for any quasi-coherent sheave $F$ on a smooth complex variety $X$, we denote $F_{\mathcal{Q}}$ to be $F\otimes_{\mathcal{O}_X}\mathcal{Q}_X$. Here $\mathcal{Q}_X$ is the Dolbeault complex
$$
\mathcal{Q}_{X^I}(U):\Omega^{0,0}(U)\xrightarrow{\bar{\partial}}\Omega^{0,1}(U)\xrightarrow{\bar{\partial}}\cdots.
$$
The Dolbeault complex $\mathcal{Q}_X$ can be also viewed as a complex of non-quasicoherent left $\mathcal{D}_X$-modules, since the holomorphic differential $\partial$ is an integrable connection and commutes with the Dolbeault differential $\bar{\partial}$.

\item Following the convention in \cite{beilinson2004chiral}, for $f: X\rightarrow Y$, we use $f_*$ (resp. $f^*$) for the push forward (resp. pull back) functor of $\mathcal{D}$-modules and $f_{\bullet}$ (resp. $f^{\bullet}$) for the sheaf theoretical push forward (resp. pull back). In this notation, we define the transfer bimodule by $\mathcal{D}_{X\rightarrow Y}:=\mathcal{O}_X\otimes_{f^{\bullet}\mathcal{O}_Y}f^{\bullet}\mathcal{D}_Y$

\item We denote the de Rham complex (usually called the Spencer complex in the context of $\mathcal{D}$-module, here we follow the convention in \cite{beilinson2004chiral}) of a $\mathcal{D}_X$-module by $\mathrm{DR}(M)$,  here
$$
\mathrm{DR}^i(M):=M\otimes_{\mathcal{O}_X}\wedge^{-i}\Theta_X,
$$
where $\Theta_X$ is the tangent sheaf. The differential $d_{\mathrm{DR}}$ is given by
\begin{align*}
  m\otimes\xi_1\wedge\cdots\wedge \xi_k &\xrightarrow{d_{\mathrm{DR}}}\sum_{i=1}^{k}(-1)^{i-1}m\xi_i\otimes \xi_1\wedge \cdots \wedge \hat{\xi}_i\wedge\cdots \wedge_k  \\
   & +\sum_{i<j}(-1)^{i+j}m\otimes [\xi_i,\xi_j]\wedge\xi_1\wedge\cdots \wedge\hat{\xi}_i\wedge\cdots \wedge\hat{\xi}_j\wedge \xi_k.
\end{align*}
\item For a right $\mathcal{D}_X$-module, set $h(M):=M\otimes_{\mathcal{D}_X}\mathcal{O}_X=M/M\Theta_X$.
\item  For abbreviation, we denote the tensor product $E\otimes_{\mathcal{O}_X} \omega_X^{\alpha}$ of a holomorphic vector bundle $X$ and some power of the canonical line bundle $\omega_X$ by $E_{\omega^{\alpha}}$. Here we allow $\alpha$ to be a rational number and we choose a $r$-spin structure if $\alpha\in \frac{1}{r}\mathbb{Z}$. We also write $E_{\mathcal{D}}$ for $E\otimes_{\mathcal{O}_X} \mathcal{D}_X$.

\item  We use $M^l=M_{\omega^{-1}}$ (resp. $N^r=N_{\omega}$) be the left-$\mathcal{D}_X$-module corresponding to the right-$\mathcal{D}_X$-module $M$ (resp. left $\mathcal{D}_X$-module $N$).

\item Then tensor product $M_1\otimes^{r} M_2$ of two right $\mathcal{D}_X$-modules $M_1$ and $M_2$ is given by $(M_1^l\otimes_{\mathcal{O}_X} M_2^l)^r$. For simplicity of notation, we write $\mathrm{Sym}\ M$ (resp.$\mathrm{Sym}\ N$) for the symmetric tensor power $\mathrm{Sym}^{\otimes^r} M$ (resp.$\mathrm{Sym}^{\otimes_{\mathcal{O}_X}} N$) for right $\mathcal{D}_X$-module $M$ (resp. left $\mathcal{D}_X$-module $N$).
\end{itemize}

\noindent\textbf{Acknowledgment.}
The author would like to thank Kevin Costello, Davide Gaiotto, Si Li and Kai Xu for helpful communications. Part of this work was done while the author was visiting the Perimeter Institute for Theoretical Physics in March 2023. The author thanks the institute for its hospitality and provision of the excellent working environment.
\section{Chiral Weyl algebras}
In this section, we review the notion of the chiral algebra introduced by Beilinson and Drinfeld \cite{beilinson2004chiral}. We first recall the definitions of chiral algebras and Lie* algebras. Given a Lie* algebra $\mathcal{L}$, one can construct a chiral algebra called the chiral enveloping algebra $U(\mathcal{L})$ satisfying a universal property. The chiral Weyl algebra can be defined as a twisted
version of the chiral enveloping algebra for a specific Lie* algebra. Given a vertex algebra equipped with extra structures, Frenkel and Ben-Zvi \cite{frenkel2004vertex} construct a chiral algebra on an arbitrary smooth Riemann surface. We explain the connection between these two constructions. Indeed, they are isomorphic in most cases that we are interested in.

\subsection{Chiral algebras}

The standard reference is \cite{beilinson2004chiral,gaitsgory1998notes}, here we follow the presentation of \cite{gaitsgory1998notes}. Let $X$ be a smooth complex curve. Let $I$ be a finite set and $\Delta_I=\cup_{i,j\in I}\Delta_{ij}$ be the big diagonal, where $\Delta_{ij}=\{(\dots,z_{k},\dots)\in X^I| k\in I, z_i=z_j\}.$ For an $\mathcal{O}_{X^I}$-module $\mathcal{F}$, we define the $\mathcal{O}_{X^I}$-module $\mathcal{F}(*\Delta_I)$ to be
$$
\mathcal{F}(*\Delta_I)=\cup_{n\geq 0}\mathcal{F}(n\cdot \Delta_I)
$$
which is the sheaf of sections of $\mathcal{F}$ but with arbitrary order of poles along $\Delta_I$.

For $\mathcal{D}$-modules, we always mean right $\mathcal{D}$-modules unless specified otherwise.

\begin{defn}\label{chiralDefn}
Let $\mathcal{A}$ be a $\mathcal{D}_X$-module. A chiral algebra structure on $\mathcal{A}$ is a $\mathcal{D}_{X^2}$-module map:
$$
\mu:(\mathcal{A}\boxtimes\mathcal{A})(*\Delta_{\{1,2\}})\rightarrow \Delta^{X\rightarrow X^2}_*(\mathcal{A}),\quad \ \Delta^{X\rightarrow X^2}:X\rightarrow X^2\ \text{the diagonal embedding}
$$
that satisfies the following two conditions:
\begin{itemize}
  \item Antisymmetry:

  If $f(z_1,z_2)\cdot a\boxtimes b$ is a local section of $\mathcal{A}\boxtimes\mathcal{A}(*\Delta_{\{1,2\}})$, then
  \begin{equation}\label{Antisymmetry}
  \mu(f(z_1,z_2)\cdot a\boxtimes b)=-\sigma_{1,2}\mu(f(z_2,z_1)\cdot b\boxtimes a),
  \end{equation}
  where $\sigma_{1,2}$ acts on $\Delta^{X\rightarrow X^2}_*\mathcal{A}=(\Delta^{X\rightarrow X^2})_{\bullet}\left(\mathcal{A}\otimes_{{\mathcal{D}_{X}}}\mathcal{D}_{X\rightarrow X^2}\right)$ by permuting two factors of $X^2$.

  \item Jacobi identity:

  If $a\boxtimes b\boxtimes c\cdot f(z_1,z_2,z_3)$ is a local section of $\mathcal{A}^{\boxtimes 3}(*\Delta_{\{1,2,3\}})$, then
  \begin{align*}
      \mu(\mu(f(z_1,z_2,z_3)\cdot &a\boxtimes b)\boxtimes c)+\sigma_{1,2,3}\mu(\mu(f(z_2,z_3,z_1)\cdot b\boxtimes c )\boxtimes a)+  \\
     & \sigma_{1,2,3}^{-1}\mu(\mu(f(z_3,z_1,z_2)\cdot c\boxtimes a )\boxtimes b)=0,
  \end{align*}
  here $\sigma_{1,2,3}$ denotes the cyclic permutation action on $\Delta^{X\rightarrow X^3}_*\mathcal{A}=(\Delta^{X\rightarrow X^2})_{\bullet}\left(\mathcal{A}\otimes_{\mathcal{D}_X}\mathcal{D}_{X\rightarrow X^3}\right)$, $\Delta^{X\rightarrow X^3}:X\rightarrow X^3\ \text{is the diagonal embedding}$.
\end{itemize}
\end{defn}

\begin{exa}
  The canonical sheaf $\omega_X$ is a chiral algebra and the chiral operation is given by the residual operation. It is also called unit chiral algebra. Throughout this paper, we denote the chiral operation on $\omega_X$ by $\mu_{\omega}$.

\end{exa}
\begin{exa}
    Let $E$ be a holomorphic vector bundle on $X$. Consider the $\mathcal{D}_X$-module $\mathrm{Sym}\ E_{\mathcal{D}} $ (see Section \ref{Conventions} for conventions). One can define a chiral product on $B=\mathrm{Sym}\ E_{\mathcal{D}}$ by the following sequence
    $$
B\boxtimes B(*\Delta)=\omega_{X^2}(*\Delta)\otimes (B^l\boxtimes B^l)\xrightarrow{\mu_{\omega}\otimes \mathrm{id}}\Delta_*\omega_X\otimes_{\mathcal{O}_{X^2}}(B^l\boxtimes B^l)\rightarrow\Delta_*B
    $$
the last arrow we use the commutative product $\mathrm{Sym}(E_{\mathcal{D}})^l\otimes \mathrm{Sym}(E_{\mathcal{D}})^l\rightarrow \mathrm{Sym}(E_{\mathcal{D}})^l$. We obtain a chiral algebra structure on $B=\mathrm{Sym}\ E_{\mathcal{D}}$. From now on, we denote the chiral operation of $B=\mathrm{Sym}\ E_{\mathcal{D}}$ by $\mu_{\mathrm{Sym}}$.
\end{exa}

\subsection{Lie* algebras and chiral enveloping algebras}
In this subsection, we review the notion of Lie* algebra and chiral enveloping algebra, see \cite{beilinson2004chiral,gaitsgory1998notes} for more details.

A Lie* algebra on $X$ is a $\mathcal{D}_{X^2}$-module with a map
$$
\mu_{\mathrm{Lie}}:\mathcal{L}\boxtimes \mathcal{L}\rightarrow\Delta_*\mathcal{L}
$$
which is antisymmetric and satisfies the Jacobi identity that is similar to the one in chiral algebras. More precisely, one deletes all "$(*\Delta)$" in the definition of chiral algebras.

The forgetful functor from chiral algebras to Lie* algebras admits a left adjoint \cite{beilinson2004chiral}
\begin{equation}\label{UniversalProp}
\mathrm{Hom}_{\mathrm{Lie}*}(\mathcal{L},\mathcal{A}^{\mathrm{Lie}})\simeq \mathrm{Hom}_{\mathrm{ch}}(\mathscr{U}(\mathcal{L}),\mathcal{A}).
\end{equation}
Here $\mathrm{Hom}_{\mathrm{ch}}(-,-)$ (resp.$\mathrm{Hom}_{\mathrm{Lie}*}(-,-)$)is the space of homomorphism between chiral algebras (resp. Lie* algebra)

Let us describe the construction of the chiral envelope.  Since $\mathscr{U}(\mathcal{L})$ is a local object, in the rest of this subsection we assume that $X$ is affine.  Denote by $p_I$ the projection $X\times X^I\rightarrow X^I$, and let $j_I:V_I\hookrightarrow X\times X^I$ be the open subset of those $(x,(x_i)_{i\in I})$ that $x\in X-\{x_i\}_{i\in I}$. Consider
$$
\mathcal{L}^{\natural}_{X^I}:=H^0(p_Ij_I)_{\bullet}j^*_I(\mathrm{DR}(\mathcal{L})\boxtimes\mathcal{O}_{X^I}).
$$
This is an $\mathcal{O}_{X^I}$-module and its fiber at $(x_i)\in X^I$ equals $\Gamma(X-\{x_i\}_{i\in I},h(\mathcal{L}))$. Here $h(\mathcal{L})$ is the middle de Rham cohomology sheaf (see Section \ref{Conventions} for conventions).

 We will mainly using $\mathcal{L}^{\natural}_{X}$ and $\mathcal{L}^{\natural}_{X^2}$. Denote
 $$
 \mathcal{L}^{\natural}_0:=\mathcal{L}^{\natural}_{X^{\empty}}=\Gamma(X,h(\mathcal{L})).
 $$

 Define
 $$
 \mathbf{U}_{X^I}(\mathcal{L}):=U(\mathcal{L}^{\natural}_{X^I})/U(\mathcal{L}^{\natural}_{X^I})\mathcal{L}^{\natural}_0.
 $$
Here $U(-)$ is the usual enveloping algebra. If we choose a local coordinate, we can present a local section of $\mathbf{U}_{X^I}(\mathcal{L})$ by the following expression
$$
g\cdot\left( f_1(t;\{x_i\})l_1 \cdots f_n(t;\{x_i\})l_n\cdot |0\rangle\right).
$$
Here $0|\rangle=\bar{1}\in \mathbf{U}_{X^I}(\mathcal{L})$ is the vacuum state, or equivalently the image of the identity. We use $l_i$ for local sections of $h(\mathcal{L})$, $g$ for a local holomorphic function on $X^I$ and $f_i(t;\{x_i\})$ for local functions that have poles only at $\{x_i\}$.

Considering the following diagram.
$$
\begin{tikzcd}
p_1^*\mathcal{L}^{\natural}_{X}\times p^*_2 \mathcal{L}^{\natural}_{X} \arrow[r,"c"] & \mathcal{L}^{\natural}_{X^2}\times \mathcal{L}^{\natural}_{X^2}                 & \mathcal{L}^{\natural}_{X^2} \arrow[l,"\iota"']                               \\
(\mathcal{L}^{\natural}_0\otimes\mathcal{O}_{X^2})^2 \arrow[r] \arrow[u]         & p_2^*\mathcal{L}^{\natural}_{X}\times p_1^*\mathcal{L}^{\natural}_{X} \arrow[u] & \mathcal{L}^{\natural}_0\otimes\mathcal{O}_{X^2} \arrow[u] \arrow[l]
\end{tikzcd}
$$
The map $c$ is the product of the natural maps $p_i^*\mathcal{L}^{\natural}_{X}\rightarrow \mathcal{L}^{\natural}_{X^2}.$ The map $\iota$ is the diagonal map.  The maps in the second line are defined in the same way.

Passing to the vacuum modules, we get the following map
 $$
 \mathbf{U}_{X}(\mathcal{L})\boxtimes  \mathbf{U}_{X}(\mathcal{L})\xrightarrow{c} U(\mathcal{L}^{\natural}_{X^2})/U(\mathcal{L}^{\natural}_{X^2})p^*_2\mathcal{L}^{\natural}_X\otimes U(\mathcal{L}^{\natural}_{X^2})/U(\mathcal{L}^{\natural}_{X^2})p^*_1\mathcal{L}^{\natural}_X\xleftarrow{\iota} \mathbf{U}_{X^2}(\mathcal{L})$$

Here we explain the vacuum modules $U(\mathcal{L}^{\natural}_{X^2})/U(\mathcal{L}^{\natural}_{X^2})p^*_i\mathcal{L}^{\natural}_X, i=1,2$. Let us denote the vacuum states by $|0\rangle_{z_j}\in U(\mathcal{L}^{\natural}_{X^2})/U(\mathcal{L}^{\natural}_{X^2})p^*_i\mathcal{L}^{\natural}_X, \{i,j\}=\{1,2\}$. The state $|0\rangle_{z_j}$ is annihilated by $p_i^*\mathcal{L}^{\natural}_X$ which consists sections from $\mathcal{L}^{\natural}_{X^2}$ but only have poles at $z_i$.

By \cite[pp 217, Section 3.7.7.]{beilinson2004chiral}, over $X\times X-\Delta$ both arrows are isomorphisms and $\mathscr{U}(\mathcal{L}):=\mathbf{U}_X^r(\mathcal{L})$ is a chiral algebra.  We can describe the chiral product of $\mathscr{U}(\mathcal{L})=\mathbf{U}_X^r(\mathcal{L})$ explicitly as follows. Given a local section
$$
a\cdot dz_1\boxtimes dz_2\in \mathbf{U}_X^r(\mathcal{L})\boxtimes \mathbf{U}_X^r(\mathcal{L})(*\Delta).
$$
We can find a sufficiently large integer $N>>0$ such that
$$
c\left((z_1-z_2)^N\cdot a\right)\in \mathrm{Im}(\iota).
$$
The chiral product is given by the following
\begin{equation}\label{ChiralProd}
\mu(a\cdot dz_1\boxtimes dz_2):=\mu_{\omega}(\frac{dz_1\boxtimes dz_2}{(z_1-z_2)^N})\cdot \left(\iota^{-1}\circ  c((z_1-z_2)^N\cdot a)\right).
\end{equation}
Here we use the isomorphism
$$
\Delta_*\omega_X\otimes_{\mathcal{O}_{X^2}} \mathbf{U}_{X^2}(\mathcal{L})=\Delta_*\mathbf{U}^r_{X}(\mathcal{L}).
$$

\begin{defn}
  Let $\mathcal{L}$ be a Lie* algebra. Then the $\mathcal{D}_X$-module $\mathscr{U}(\mathcal{L}):=\mathbf{U}^r_X(\mathcal{L})$  is a chiral algebra with the above chiral product (\ref{ChiralProd}).
\end{defn}
 The chiral algebra $\mathscr{U}(\mathcal{L})$ satisfies the universal property (\ref{UniversalProp}) and carries a canonical PBW filtration $\mathscr{U}(\mathcal{L})\simeq \cup_n\mathscr{U}(\mathcal{L})_n$. We have $\mathscr{U}(\mathcal{L})_0\simeq \omega_X$ and $\mathscr{U}(\mathcal{L})_1\simeq \mathscr{U}(\mathcal{L})_0\oplus \mathcal{L}$.

We have the following twisted version of the chiral envelope.
\begin{defn}

Let $\mathcal{L}^{\flat}$ be a central extension of $\mathcal{L}$ by $\omega_X$. Then the $\flat$-twisted  chiral enveloping algebra of $\mathcal{L}$ is the quotient  $\mathscr{U}(\mathcal{L})^{\flat}$ of $\mathscr{U}(\mathcal{L}^{\flat})$ modulo the ideal generated by $1-1^{\flat}$. Here $1=\omega_X\in  \mathscr{U}(\mathcal{L}^{\flat})$ is the unit the chiral envelope and $1^{\flat}=\omega_X\subset \mathcal{L}^{\flat}$.
\end{defn}

Now we are ready to define the main object in this paper.
\begin{defn}(Chiral Weyl algebras \cite[pp228,Section 3.8.1]{beilinson2004chiral})\label{DefnChiralWeyl}
    Let $E$ be a locally free sheaf on $X$ equipped with a symplectic pairing $E\otimes_{\mathcal{O}_X} E\rightarrow\omega_X$. Then the chiral Weyl algebra generated by $E$ is defined to be the twisted chiral enveloping algebra $\mathscr{U}(\mathcal{L})^{\flat}$. Here the abelian Lie* algebra $\mathcal{L}$ is $E_{\mathcal{D}}=E\otimes_{\mathcal{O}_X}\mathcal{D}_X$ and $\mathcal{L}^{\flat}=E_{\mathcal{D}}\oplus \omega_X$ is the central extension of $\mathcal{L}$ using the symplectic pairing on $E$.
\end{defn}

To illustrate the above construction, we assume $X=\mathbb{C}$ and $E=X\times \mathbb{E}$ is a trivial bundle for simplicity (here $\mathbb{E}$ is a symplectic vector space). We choose a basis $\{e_i\}_{i=1,\dots,m=\dim(\mathbb{E})}$ of $\mathbb{E}$ and write the symplectic pairing between $e_i, e_j$ by $\omega_{ij}$. Then a section of $\mathscr{U}(\mathcal{L})^{\flat}$ can be written as
$$
f(z)\cdot \frac{ e_{i_1}}{(t-z)^{n_1+1}}\cdots \frac{ e_{i_k}}{(t-z)^{n_k+1}}|0\rangle dz
$$
where $f(z)$ is a holomorphic function over $\mathbb{C}$. As an $\mathcal{O}_X$-module, $\mathscr{U}(\mathcal{L})^{\flat}=\mathbf{U}_X(\mathcal{L})\otimes_{\mathcal{O}_X}\omega_X$ is isomorphic to $\mathrm{Sym}\ \mathcal{L}=\mathrm{Sym}(E_{\mathcal{D}}\otimes_{\mathcal{O}_X}\omega^{-1}_X)\otimes_{\mathcal{O}_X}\omega_X$ under the following identification
$$
\frac{n_1!\cdot e_{i_1}}{(t-z)^{n_1+1}}\cdots \frac{n_k!\cdot e_{i_k}}{(t-z)^{n_k+1}}|0\rangle\cdot dz\leftrightarrow    ( e_{i_1}\otimes \partial_z^{n_1}\otimes dz^{-1}\cdots e_{i_k}\otimes \partial_z^{n_k}\otimes dz^{-1})\cdot dz.
$$

For example, if we want to compute
$$
\mu(\frac{e_i\boxtimes e_j}{z_1-z_2}).
$$
First, we note that
$$
\iota(\frac{e_i}{t-z_1}\cdot \frac{e_j}{t-z_2}|0\rangle)=\frac{e_i}{t-z_1}\cdot \frac{e_j}{t-z_2}|0\rangle_{z_1}\boxtimes 1+\frac{e_i}{t-z_1}|0\rangle_{z_1}\boxtimes \frac{e_j}{t-z_2}|0\rangle_{z_2}
$$

$$
+1\boxtimes\frac{e_i}{t-z_1}\cdot \frac{e_j}{t-z_2}|0\rangle_{z_2}
$$

$$
=0+\frac{e_i}{t-z_1}|0\rangle_{z_1}\boxtimes \frac{e_j}{t-z_2}|0\rangle_{z_2}-\frac{\omega_{ij}}{z_1-z_2}.
$$

Here $\omega_{ij}$ is the symplectic pairing between $e_i$ and $e_j$. Thus we have
$$
\iota \left((z_1-z_2) \cdot \frac{e_i}{t-z_1}\cdot \frac{e_j}{t-z_2}|0\rangle+\omega_{ij}\right)=c((z_1-z_2)\cdot e_i\boxtimes e_j).
$$

Finally, we have
\begin{align*}
\mu(\frac{e_idz_1\boxtimes e_jdz_2}{z_1-z_2})&=\mu_{\omega}(\frac{dz_1\boxtimes dz_2}{(z_1-z_2)^2})\cdot ((z_1-z_2)\cdot e_idz_1^{-1}\boxtimes e_jdz_2^{-1})\\
&=\mu_{\omega}(\frac{dz_1\boxtimes dz_2}{(z_1-z_2)^2})\cdot \left((z_1-z_2) \cdot \frac{e_i}{t-z_1}\cdot \frac{e_j}{t-z_2}|0\rangle+\omega_{ij}\right)\\
&=\frac{e_i}{t-z}\cdot \frac{e_j}{t-z}|0\rangle+\mu_{\omega}(\frac{\omega_{ij}dz_1\boxtimes dz_2}{(z_1-z_2)^2}).
\end{align*}

\begin{rem}\label{PBW}
    We have the chiral PBW theorem by \cite[pp224, Section 3.7.20]{beilinson2004chiral}
    $$
    \mathrm{gr}(\mathscr{U}(\mathcal{L})^{\flat})\simeq \mathrm{Sym}\ \mathcal {L}.
    $$
    More explicitly, if we choose a local coordinate, we have local isomorphism as $\mathcal{O}$-module
$$
\frac{n_1!\cdot e_{i_1}}{(t-z)^{n_1+1}}\cdots \frac{n_k!\cdot e_{i_k}}{(t-z)^{n_k+1}}|0\rangle\cdot dz\leftrightarrow    ( e_{i_1}\otimes \partial_z^{n_1}\otimes dz^{-1}\cdots e_{i_k}\otimes \partial_z^{n_k}\otimes dz^{-1})\cdot dz.
$$

The right-hand side operators are commuting because the commutator is
$$
\frac{\langle e_{i_p},e_{i_q}\rangle}{(t-z)^{n_p+n_q+2}}
$$
which vanishes in the de Rham cohomology $\Gamma(X,h(\omega_X))$. Lower-order terms will appear if we do the local coordinate change.
\end{rem}
 \begin{rem}
    In fact, $\{\mathbf{U}_{X^I}(\mathcal{L})\}_{I\in\mathcal{S}}$ forms a factorization algebra in the sense of Beilinson and Drinfeld \cite{beilinson2004chiral}. One obtains the chiral algebra structure on $\mathbf{U}_{X}(\mathcal{L})$ from the factorization structure. The formalism of factorization algebra is closely related to the usual point-splitting method in physics. For example, the normal order product $:e_ie_j:$ can be written as the pull back $\Delta^*(\frac{e_i}{t-z_1}\otimes \frac{e_j}{t-z_2}|0\rangle)$. While under the isomorphism $c^{-1}\circ \iota$ the element $\frac{e_i}{t-z_1}\otimes \frac{e_j}{t-z_2}|0\rangle\in \mathbf{U}_{X^2}(\mathcal{L})$  corresponds to $$
\frac{e_i}{t-z_1}|0\rangle\boxtimes\frac{e_j}{t-z_2}|0\rangle
     -\frac{\omega_{ij}|0\rangle\boxtimes |0\rangle}{z_1-z_2}\in j_*j^*\mathbf{U}_X(\mathcal{L})\boxtimes \mathbf{U}_X(\mathcal{L}).$$
     This is exactly how physicist define the normal ordered product using point-splitting
     $$
     :e_i(w)e_j(w):=\lim_{z\rightarrow w}\left(e_i(z)e_j(w)-\frac{\omega_{ij}}{z-w}\right).
     $$
  \end{rem}

\subsection{Comparison with the vertex algebra bundle construction}
In this subsection, we collect some connections between chiral algebra and vertex algebra. We show that some chiral envelopes are exactly corresponding to vertex algebra bundles associated to some familiar vertex algebras. A brief introduction to the general construction of vertex algebra bundle (this construction is due to Frenkel and Ben-Zvi \cite{frenkel2004vertex}) can be found in the Appendix \ref{VOAappendix}. Throughout this section, $X$ stands for a compact smooth Riemann surface.

We assume that our holomorphic vector bundle   $E$ has the following form
$$
E={E}_{\frac{1}{2}}\omega^{\frac{1}{2}}\bigoplus\oplus_{\alpha\in \mathbb{Q},\alpha<\frac{1}{2}}{E}_{\alpha}\omega_X^{1-\alpha}\oplus ({E}_{\alpha})^{\vee}\omega_X^{\alpha}
$$

We explain the notations in the above expression.  Holomorphic vector bundles ${E}_{\alpha},\alpha\in \mathbb{Q}, \alpha<\frac{1}{2}$ are given by the twisting construction
$$
E_{\alpha}\oplus ({E}_{\alpha})^{\vee}=\mathcal{P}_{\alpha}\times_{\mathrm{GL}_{n_{\alpha}}} \left(\mathbb{E}_{\alpha}\oplus (\mathbb{E}_{\alpha})^{\vee}\right).
$$

Here $\mathcal{P}_{\alpha}$ is holomorphic $\mathrm{GL}_{n_{\alpha}}$-principal bundle over $X$ and vectors spaces
$$
\mathbb{E}_{\alpha}=\mathrm{Span}\{e_{\alpha,j}\}_{j=1,\dots,n_{\alpha}},\quad
\mathbb{E}_{\alpha}^{\vee}=\mathrm{Span}\{e_{\alpha}^j\}_{j=1,\dots,n_{\alpha}},$$
are viewed as natural $\mathrm{GL}_{n_{\alpha}}$-representations. For the case $\alpha=\frac{1}{2}$, we have
$$
E_{\frac{1}{2}}=\mathcal{P}_{\frac{1}{2}}\times_{\mathrm{Sp}_{2n_{\frac{1}{2}}}} \mathbb{E}_{\frac{1}{2}}
$$
where $\mathcal{P}_{\frac{1}{2}}$ is holomorphic $\mathrm{Sp}_{2n_{\frac{1}{2}}}$-principal bundle over $X$ and $\mathbb{E}_{\frac{1}{2}}$ is a symplectic vector space $(\mathbb{E}_{\frac{1}{2}},\omega)=\left(\mathrm{Span}\{e_{\frac{1}{2}}^{i}\}_{i=1,\dots,2n_{\frac{1}{2}}},\omega(e_{\frac{1}{2}}^{i},e_{\frac{1}{2}}^{j})=\omega_{ij}\right).$

For simplicity, we assume that $\{\alpha\in\mathbb{Q}|\mathbb{E}_{\alpha}\neq 0\}$ is a finite set. For each $a \in \mathbb{E}_{\alpha}\oplus \mathbb{E}_{\alpha}^{\vee}$, we associate a field (see Appendix \ref{VOAappendix} for a review of vertex algebras)

\begin{equation}\label{ConformalWeight}
a(z)=\sum_{r \in \mathbb{Z}-\alpha} a_{r} z^{-r-\alpha}.
\end{equation}
We define their singular part of OPE by
$$
a(z) b(w) \sim\frac{\langle a, b\rangle}{z-w}, \quad \forall a, b \in \mathbb{E}_{\alpha}\oplus \mathbb{E}_{\alpha}^{\vee},
$$

\begin{rem}
  In (\ref{ConformalWeight}), we use the conformal weight notation. If we use the VOA notation, we have
  $$
  a(z)=\sum_{n \in \mathbb{Z}} a_{(n)} z^{-n-1},
  $$
  that is, $a_{(n)}=a_{n+1-\alpha}.$
\end{rem}

The vertex algebra $\mathbf{V}_{\alpha}$ that realizes the above OPE relations is given by the Fock representation space. The vacuum vector satisfies
$$
a_{r}|0\rangle=0, \quad \forall a \in \mathbb{E}_{\alpha}\oplus \mathbb{E}_{\alpha}^{\vee}, r+\alpha>0,
$$
and $\mathbf{V}_{\alpha}$ is freely generated from the vacuum by the operators $\left\{a_{r}\right\}_{r+\alpha \leq 0}, a \in \mathbb{E}_{\alpha}\oplus \mathbb{E}_{\alpha}^{\vee} .$ For any $a \in \mathbb{E}_{\alpha}\oplus \mathbb{E}_{\alpha}^{\vee}$, $a(z)$ becomes a field acting naturally on the Fock space $\mathbf{V}_{\alpha}$ . The vertex algebra $\mathbf{V}_{\frac{1}{2}}$ can be constructed in the same way using the symplectic form on $\mathbb{E}_{\frac{1}{2}}$.

Define
$$
\mathbb{E}_{\alpha}^{\mathrm{diff}}:=  \mathbb{E}_{\alpha}[t^{-1}]t^{-1},
$$
then
$$
\mathbf{V}_{\alpha}\simeq\mathrm{Sym}(\mathbb{E}_{\alpha}^{\mathrm{diff}}\oplus (\mathbb{E}_{\alpha})^{\vee,\mathrm{diff}} )
$$
as vector spaces. For $\alpha=\frac{1}{2}$, we have
$$
\mathbf{V}_{\frac{1}{2}}\simeq \mathrm{Sym}(\mathbb{E}_{\frac{1}{2}}^{\mathrm{diff}}),\quad \mathbb{E}_{\frac{1}{2}}^{\mathrm{diff}}:=  \mathbb{E}_{\frac{1}{2}}[t^{-1}]t^{-1}.
$$

 For $\mathbf{V}_{\alpha}$ (resp.$\mathbf{V}_{\frac{1}{2}}$), we have $\widehat{\mathfrak{gl}}_{n_{\alpha}}$-structure (resp.$\widehat{\mathfrak{sp}}_{2n_{\frac{1}{2}}}$-structure). Here
$$
\widehat{\mathfrak{gl}}_{n_{\alpha}}:=\widehat{\mathfrak{h}}\oplus \widehat{\mathfrak{sl}}_{n_{\alpha}} .
$$
Then $\mathrm{GL}_{n_{\alpha}}(\mathcal{O})$ acts on $\mathbf{V}_{\alpha}$ through
$$
E^i_{j}t^{n}\mapsto (J^i_j)_n
$$
where $J^i_j=:e_{\alpha}^ie_{\alpha,j}:$ and $E^i_j\in\mathfrak{gl}_{n_{\alpha}}$ is the elementary matrix.

Furthermore, each $\mathbf{V}_{\alpha}$ has the standard conformal structure
$$
T_{\alpha}=\sum_{i=1}^{n_{\alpha}}(1-\alpha):\partial e_{\alpha,i}\cdot e_{\alpha}^i:-\alpha :e_{\alpha,i}\cdot\partial e^i_{\alpha}:,\quad T_{\frac{1}{2}}=\frac{1}{2}\sum_{i,j=1,\dots,2n_{\frac{1}{2}}}\omega_{ij}:e^i_{\frac{1}{2}}\partial e^j_{\frac{1}{2}}:.
$$

Now we proceed to define the vertex algebra bundle. Define
$$
\mathcal{P}:=\times^{\alpha}_X\mathcal{P}_{\alpha}
$$
which is a $\mathrm{G}=\mathrm{Sp}_{2n_{\frac{1}{2}}}\times(\times_{\alpha>\frac{1}{2}} \mathrm{GL}_{n_{\alpha}})$-principal bundle. Let $\mathcal{O}:=\mathbb{C}[[t]]$  and $\widehat{\mathcal{P}}$ be the corresponding $\mathrm{Aut}(\mathcal{O})\ltimes\mathrm{G}(\mathcal{O})$-bundle. We now can form a vertex algebra bundle as
$$
\mathcal{V}:=\widehat{\mathcal{P}}\times_{\mathrm{Aut}(\mathcal{O})\ltimes \mathrm{G}(\mathcal{O})}\mathbf{V}
$$
where $\mathbf{V}=\oplus_{\alpha}\mathbf{V}_{\alpha}$.

\begin{rem}
    Since here our vertex algebra $\mathbf{V}$ is not quasi-conformal because of non-integral gradation by $L_0$, the twisting expression is only a formal one. However, for $\frac{1}{r}\mathbf{N},r\in \mathbf{N}$ gradation, one can construct the vertex algebra bundle by choosing r-spin structures. This is discussed in Appendix \ref{VOAappendix}.
 \end{rem}

From \cite{frenkel2004vertex}, $\mathcal{V}$ is left $\mathcal{D}_X$-module and the corresponding right $\mathcal{D}_X$-module $\mathcal{V}^r$ has a chiral algebra structure. Then as expected, we have
$$
\mathscr{U}(\mathcal{L})^{\flat}\simeq \mathcal{V}^r.
$$
Here $\mathcal{L}=E_{\mathcal{D}}$. This will be proved in Appendix \ref{VAbundleChiral}.
\section{Chiral homology
and trace map}
Our goal in this section is to introduce the notion of chiral homology and give a brief summary of BV quantization. The construction of solutions to so-called quantum master equation (QME) lies at the heart of BV formalism. We will introduce the notion of the trace map on chiral homology within BV formalism which can be viewed as a solution to generalized QME.

\subsection{Chiral homology}
 We refer the reader to \cite{beilinson2004chiral} for a detailed account of the chiral homology theory.  In \cite{beilinson2004chiral}, Beilinson and Drinfeld construct various chiral chain complexes to compute the chiral homology. We use the one in \cite[pp 309, Section 4.2.15.]{beilinson2004chiral}.

 We begin by fixing some notations. For a finite index set $I$, we will denote by ${\textsf{Q}}(I)$ the set of equivalence relations on $I$. Note that $\equiQ(I)$ is a disjoint union $\sqcup_{n\geq 1}\equiQ(I,n)$, where $\equiQ(I,n)$ is the set of equivalence relations on $I$ such that the number of the equivalence classes is $n$. An element of $\equiQ(I,n)$ can be described by a pair $(T,\pi:I\twoheadrightarrow T)$, where $T$ is a finite set such that $|T|=n$ and $\pi$ is a surjection. We denote $X^I$ to be the product of $X_i$'s, where $X_i$ is a copy of $X$ and $i\in I$. As before, $X$ stands for a compact smooth Riemann surface.

 Given a surjection $\pi:I\twoheadrightarrow T$,  we have the corresponding diagonal embedding
 $$
 \Delta^{(\pi)}=\Delta^{(I/T)}:X^T\hookrightarrow X^I,
 $$
 such that $\mathrm{pr}_i\Delta^{(I/T)}=\mathrm{pr}_j\Delta^{(I/T)}$ if and only if $\pi(i)=\pi(j)$, here $\mathrm{pr}_i: X^I\rightarrow X$ is the i-th projection.

\begin{rem}
  With this notation, $\Delta^{X\rightarrow X^2}$ in Definition \ref{chiralDefn} is the same as $\Delta^{(\{1,2\}/\{\bullet\})}:X\rightarrow X^2$.
\end{rem}
 Let $\mathcal{A}$ be a graded chiral algebra. Now we define a $\mathbb{Z}$-graded $\mathcal{D}_{X^I}-$module $C(\mathcal{A})_{X^I}$ on $X^I$
$$
\boxed{C(\mathcal{A})^{\bullet}_{X^I}=\bigoplus_{T\in \equiQ(I)} \Delta^{(I/T)}_*\big((\mathcal{A}\shift)^{\boxtimes T}(*\Delta_T)\big).}
$$
The differential looks as follows. Its component
$$
d_{\mu, (T,T')}: \Delta_*^{(I/T)}\big((\mathcal{A}\shift)^{\boxtimes T}(*\Delta_T)\big)\rightarrow\Delta_*^{(I/T')}\big((\mathcal{A}\shift)^{\boxtimes T'}(*\Delta_{T'})\big)
$$
can be non-zero only for $T'\in \equiQ(T,|T|-1)$. Then $T=T''\bigsqcup\{\alpha',\alpha''\}, T'=T''\bigsqcup\{\alpha\}$ and $d_{\mathrm{ch},(T,T')}$ is the exterior tensor product of the chiral operation
$$
d_{\mu,(T,T')}=\mu_{\mathcal{A}}\shift: (\mathcal{A}_{\alpha'}\shift\boxtimes \mathcal{A}_{\alpha''}\shift)(*\Delta_{\{\alpha',\alpha''\}})\rightarrow\Delta_*\mathcal{A}_{\alpha}\shift
$$
and the identity map for $\mathcal{A}^{\boxtimes T''}$.

 We can extend the above construction of the Chevalley-Cousin complex to the case with coefficient  $\mathcal{Q}_{X^\bullet}$ (here $\mathcal{Q}$ stands for Dolbeault complex, see Section \ref{Conventions} for conventions)
$$
\boxed{C(\mathcal{A})_{\mathcal{Q},X^I}:=\bigoplus_{T\in \equiQ(I)} \Delta^{(I/T)}_*((\mathcal{A}\shift)^{\boxtimes T}(*\Delta_T)_\mathcal{Q}).}
$$
where the differential is $$d^{\mathcal{Q}}_{\mu,(T,T')}:=\mu^{\mathcal{Q}}_{\mathcal{A}}\shift:(\mathcal{A}_{\alpha'}\shift\boxtimes \mathcal{A}_{\alpha''}\shift)(*\Delta_{\{\alpha',\alpha''\}})_\mathcal{Q}\rightarrow \Delta_*((\mathcal{A}_{\alpha}\shift)_\mathcal{Q}).$$

 Since we mainly deal with $(C(\mathcal{A})_{\mathcal{Q},X^I},d^{\mathcal{Q}}_{\mu})$, from now on we omit the superscript ${\mathcal{Q}}$ of the differential $d^{\mathcal{Q}}_{\mu,(T,T')}$.
\begin{rem}
  Here we view a smooth form in $\Omega^{0,\bullet}$ as a degree $\bullet$ element.
\end{rem}

Now we are ready to introduce the chiral chain complex.

\begin{defn} {\cite{beilinson2004chiral}}\label{ChiralChain}
  Let $\mathcal{A}$ be a chiral algebra on $X$.
Let $\mathcal{S}$ be the category of finite non-empty sets whose morphisms are surjections.

The chiral chain complex is defined as follows
  $$
\boxed{\tilde{C}^{\mathrm{ch}}(X,\mathcal{A})^{-\bullet}_{\mathcal{Q}}:=\mathrm{hocolim}(I\in\mathcal{S}, \Gamma(X^I, F^{\bullet}_{X^I}))}  $$
  where $F_{X^I}=\mathrm{DR}(C(\mathcal{A})_{\mathcal{Q},X^I})=\mathrm{DR}\big(\mathop{\bigoplus}\limits_{T\in \equiQ(I)}\Delta_*^{(I/T)}((\mathcal{A}\shift)^{\boxtimes T}(*\Delta_T)_\mathcal{Q})\big)$. The notation $\mathrm{hocolim}$ means that we take the homotopy colimit of the diagram in the bracket. The differential is
  $$
  d_{\mathcal{A}}^{\mathrm{ch}}=d_{\mathrm{DR}}+\bar{\partial}+d_{\mu}+d_{\mathrm{ho}}.
   $$
   The differential $d_{\mathrm{ho}}$ is coming from the homotopy colimit and will not play any role in our paper.
\end{defn}

 From the definition of the chiral chain complex, the homological grading is
$$
\mathrm{hodeg}(\alpha)=-\mathrm{deg}(\alpha)=-q+|T|-p,\quad\alpha\in \Omega^{0,q}(X^I,\mathrm{DR}^p\Delta_*^{(I/T)}(\mathcal{A}\shift)^{\boxtimes T}(*\Delta_T)),
$$
here we write
$$
\alpha=\eta\cdot a,\quad\eta\in \Omega^{0,q}(X^I,\mathrm{DR}^p\Delta_*^{(I/T)}(\omega_X\shift)^{\boxtimes T}(*\Delta_T)), a\in \Omega^{0,0}(X^I,\mathcal{V}^{\boxtimes I}).
$$
\begin{defn}
  {\cite{beilinson2004chiral}} The chiral homology is defined as the homology of the chiral chain complex
$$
H^{\mathrm{ch}}_{i}(X,\mathcal{A})=H_i\left(\tilde{C}^{\mathrm{ch}}(X,\mathcal{A})^{\bullet}_{\mathcal{Q}}\right).
$$
\end{defn}

We do not need the full details in the construction of the chiral chain complex. We will focus on the following subsector of the chiral chain complex since the most relevant differentials for our construction are $d_{\mu}$ and $\bar{\partial}$.

$$
\begin{tikzcd}
\cdots \arrow[r,"\bar{\partial}"] & {\Omega^{0,\bullet}\left(X^I, \Delta^{(I/T)}_*((\mathcal{A}\shift)^{\boxtimes T}(*\Delta_T))\right)} \arrow[d,"d_{\mu}"] \arrow[r,"\bar{\partial}"] & {\Omega^{0,\bullet+1}\left(X^I, \Delta^{(I/T)}_*((\mathcal{A}\shift)^{\boxtimes T}(*\Delta_T))\right)} \arrow[d,"d_{\mu}"] \arrow[r,"\bar{\partial}"] & \cdots \\
\cdots \arrow[r,"\bar{\partial}"] & {\Omega^{0,\bullet}\left(X^I, \Delta^{(I/T')}_*((\mathcal{A}\shift)^{\boxtimes T'}(*\Delta_{T'}))\right)} \arrow[r,"\bar{\partial}"]                                                                      & {\Omega^{0,\bullet+1}\left(X^I, \Delta^{(I/T')}_*((\mathcal{A}\shift)^{\boxtimes T'}(*\Delta_{T'}))\right)}  \arrow[r,"\bar{\partial}"]                                                  & \cdots
\end{tikzcd}
$$

Here $\Omega^{0,\bullet}\left(X^I, \Delta^{(I/T)}_*((\mathcal{A}\shift)^{\boxtimes T}(*\Delta_T))\right):=\Gamma\left(X^I,\Delta^{(I/T)}_*((\mathcal{A}\shift)^{\boxtimes T}(*\Delta_T)\otimes \Omega^{0,\bullet}_{X^T})\right)$.

We have a similar complex for Lie* algebra. One defines
\begin{equation}\label{ChainLie}
\boxed{\tilde{C}^{\mathrm{Lie}}(X,\mathcal{L})^{-\bullet}_{\mathcal{Q}}:=\mathrm{hocolim}(I\in\mathcal{S}, \Gamma(X^I, F^{\mathrm{Lie},\bullet}_{X^I}))}
\end{equation}

  where $F^{\mathrm{Lie}}_{X^I}=\mathrm{DR}(C^{\mathrm{Lie}}(\mathcal{L})_{\mathcal{Q},X^I})=\mathrm{DR}\big(\mathop{\bigoplus}\limits_{T\in \equiQ(I)}\Delta_*^{(I/T)}((\mathcal{L}\shift)^{\boxtimes T}_\mathcal{Q})\big)$. The differential $d_{\mathrm{Lie}}$ is defined in a similar way to the chiral differential $d_{\mathrm{ch}}$.

\subsection{Batalin-Vilkovisky (BV) formalism}
We recall the notion of BV quantization \cite{batalin1981gauge} and list the mathematical structures that are relevant to us. For thorough treatments, we refer the reader to \cite{costello2022renormalization,costello2021factorization}.

\begin{defn}[BV algebra]
A Batalin-Vilkovisky (BV) algebra is a pair $(O_{\mathrm{BV}},\Delta_{\mathrm{BV}})$ where
\begin{itemize}
  \item $O_{\mathrm{BV}}$ is a $\mathbb{Z}$-graded commutative associative unital algebra over $\mathbb{C}$.
  \item $\Delta:O_{\mathrm{BV}}\rightarrow O_{\mathrm{BV}}$ is a linear operator of degree 1 such that $\Delta^2=0$.
  \item Define $\{-,-\}:O_{\mathrm{BV}}\otimes O_{\mathrm{BV}}\rightarrow O_{\mathrm{BV}}$ by
  $$
  \{a,b\}:=\Delta(ab)-(\Delta a)b-(-1)^{|a|}a\Delta b, \ a,b\in O_{\mathrm{BV}}.
  $$
  Then $\{-,-\}$ satisfies the following graded Leibnitz rule
  $$
  \{a,bc\}:=\{a,b\}c+(-1)^{(|a|+1)|b|}b\{a,c\},\ \ a,b,c\in O_{\mathrm{BV}}.
  $$
\end{itemize}

\end{defn}

The main example of BV algebras that we are going to study is a finitely generated algebra construct as follows.

Let $E$ be a holomorphic Hermitian vector bundle over $X$ with a symplectic pairing $E\otimes_{\mathcal{O}_X} E\rightarrow \omega_X$. Denote the space of harmonic forms by $\mathbb{H}(X,E)$, then the pairing
$$
\int_X\langle-,-\rangle: \mathbb{H}(X,E)\otimes \mathbb{H}(X,E)\rightarrow \mathbb{C}
$$
is a (-1)-shifted symplectic pairing. This induces the BV algebra structure on the algebra $O(\mathbb{H}(X,E))$.

\begin{defn}
Let $(O_{\mathrm{BV}},\Delta)$ be a BV algebra. Assume that $I\in O_{\mathrm{BV}}$ is a nilpotent element. Then the element $I$ is said to satisfy the quantum master equation (QME) if
$$
{{\Delta e^{I}=0.}}
$$
\end{defn}

This is equivalent to
\begin{equation}\label{QME2}
\Delta I+\frac{1}{2}\{I,I\}=0.
\end{equation}

In general, if $(C_{\bullet},d_{C})$ is a $\mathbb{C}$-chain complex, we introduce the following generalized notion of the quantum master equation.
\begin{defn}\label{GenQME}
 W say a $\mathbb{C}$-linear map
$$
\langle-\rangle: C_{\bullet}\rightarrow O_{\mathrm{BV}}
$$
satisfies QME if
$$
{(d_{C}+\Delta)\langle-\rangle=0}.
$$
\end{defn}

\begin{rem}\label{GenQMERem}
If we take $(C_{\bullet},d)=(\mathbb{C},0)$, the map $I(-)$
$$
I(-):\mathbb{C}\rightarrow O_{\mathrm{BV}}, \ I(c)=ce^{I}
$$
satisfies QME if and only if $I\in O_{\mathrm{BV}}$ itself satisfies QME.
\end{rem}
Roughly speaking, the effective BV quantization theory $\mathcal{T}$ on a smooth manifold $X$ contains
\begin{itemize}
 \item  Quantum  observable algebra (factorization algebra)  : ${\mathrm{Obs}_\mathcal{T}}$, a vector space equipped with a certain algebraic structure. Here we mean factorization algebras in the sense of Costello and Gwilliam \cite{costello2021factorization}.
  \item Factorization homology (complex): ${(C_{\bullet}(\mathrm{Obs}_\mathcal{T}),d_\mathcal{T})}$, a $\mathbb{C}$-chain complex.
  \item A BV algebra ${(O_{\mathrm{BV},\mathcal{T}},\Delta)}$. This BV algebra corresponds to the algebra of the zero modes in effective BV quantization.
  \item A linear map
  $$ \mathrm{Tr}_T: C_{\bullet}(\mathrm{Obs}_\mathcal{T})\rightarrow (O_{\mathrm{BV}}, -\Delta_{\mathrm{BV}}),
  $$
which satisfies QME, that is, we have
  $$
{(d_\mathcal{T}+\Delta_{\mathrm{BV}})\mathrm{Tr}_\mathcal{T}(-)=0.}
  $$
\end{itemize}

\begin{rem}
    In quantum field theory, people are interested in calculating correlation functions of quantum observables. In BV formalism, one interprets the physical path integral of the theory with BV action $S_{\mathrm{BV}}(\phi)$
    $$
    \int \mathcal{D}\phi \ e^{S_{\mathrm{BV}}(\phi)}\mathcal{O}_1\cdots \mathcal{O}_n,
    $$
    as a solution of the quantum master equation, that is, a chain map
    $$
    \mathrm{Tr}: (C_{\bullet}(\mathrm{Obs}_{\mathcal{T}}),d_{\mathcal{T}})\rightarrow (O_{\mathrm{BV}},- \Delta_{\mathrm{BV}}).
    $$
    Here we intuitively treat the physical operator insertion $\mathcal{O}_1\cdots \mathcal{O}_n$ as an element in the complex $(C_{\bullet}(\mathrm{Obs}_{\mathcal{T}}),d_{\mathcal{T}})$. The algebraic structure of the quantum observables is encoded in the differential $d_{\mathcal{T}}$.
\end{rem}

\subsection{Trace map on chiral homology of chiral Weyl algebras}
The physical model behind the chiral Weyl algebra is the quantization of a free two-dimensional chiral quantum field theory. The action functional of this free theory is given by
$$
S_{\mathrm{BV}}(\phi)=\int_{X}\langle\phi,\bar{\partial}\phi\rangle,\quad \phi \in \mathcal{E}=\Omega^{0,\bullet}(X,E).
$$

The space $\Omega^{0,\bullet}(X,\mathrm{Sym}\ \mathcal{L})$ can be viewed as classical local functionals. The quantization of the chiral Lagrangian theory $\mathcal{T}_E$ will give rise to the chiral Weyl algebra $\mathcal{A}=\mathscr{U}(\mathcal{L})^{\flat}$. This chiral algebra captures the algebraic structure of quantum observables in the theory $\mathcal{T}_E$.

In this theory, the corresponding BV algebra is the finitely generated algebra $O_{\mathrm{BV}}:=O(\mathbb{H}(X,E))$. Here we choose a Hermitian metric on $E$ and denote the harmonic elements in $\Omega^{0,\bullet}(X,E)$ by $\mathbb{H}(X,E)$. The (-1)-shifted symplectic pairing on $\mathbb{H}(X,E)$ is given by $\int_X\langle-,-\rangle$. Applying the BV quantization machinery, we expect to have a trace map
$$
\mathrm{Tr}:\tilde{C}^{\mathrm{ch}}(X,\mathcal{A})_{\mathcal{Q}}\rightarrow O_{\mathrm{BV}}.
$$

To summarize, we have (the trace map is yet to be constructed)

\begin{itemize}
 \item Local observable algebra (Factorization algebra) : $\mathrm{Obs}_{\mathcal{T}_E}=\mathcal{A}:=\mathscr{U}(\mathcal{L})^{\flat}$ the chiral Weyl algebra,
  \item Factorization homology (complex): ${(C_{\bullet}(\mathrm{Obs}_{\mathcal{T}_E}),d_{\mathcal{T}_E})}=\tilde{C}^{\mathrm{ch}}(X,\mathcal{A})_{\mathcal{Q}}$ is the chiral chain complex.
  \item The BV algebra ${(O_{\mathrm{BV},\mathcal{T}_E}=\mathcal{O}(\mathbb{H}(X,E)),\Delta_{\mathrm{BV}})}$ which is polynomial functions on the space of the harmonic elements $\mathbb{H}(X,E)$.
  \item A linear map
  $$ \mathrm{Tr}_{\mathcal{T}_E}: \tilde{C}^{\mathrm{ch}}(X,\mathcal{A})_{\mathcal{Q}}\rightarrow (O_{\mathrm{BV}}, -\Delta_{\mathrm{BV}}),
  $$
which satisfies QME, that is, we have
  $$
{(d_{\mathcal{T}_E}+\Delta_{\mathrm{BV}})\mathrm{Tr}_{\mathcal{T}_E}(-)=0.}
  $$
\end{itemize}

 The main theorem of this paper is the following result.

\begin{thm}
     There is an explicit trace map $\mathrm{Tr}_{\mathcal{T}_E}$ constructed using Feynman diagrams and is furthermore a quasi-isomorphism.
     \end{thm}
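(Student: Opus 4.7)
The plan is to construct $\mathrm{Tr}_{\mathcal{T}_E}$ as a sum of Feynman-diagram contributions built from a Dolbeault propagator attached to the Hermitian metric on $E$, check the chain-map property as a graph-level quantum master equation, and then upgrade it to a quasi-isomorphism via the chiral PBW filtration. My construction will follow the BV framework of \cite{gui2021elliptic}, but replace its direct treatment on an elliptic curve with a coordinate-invariant version routed through the chiral envelope $\mathscr{U}(\mathcal{L})^{\flat}$.

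First, I would fix the Hodge decomposition $\Omega^{0,\bullet}(X,E) = \mathbb{H}(X,E) \oplus \mathrm{Im}\,\bar{\partial} \oplus \mathrm{Im}\,\bar{\partial}^{\ast}$, denote the harmonic projector by $\Pi_{\mathbb{H}}$, and let $P \in \Omega^{0,\bullet}(X\times X,\, E\boxtimes E)$ be the Dolbeault Green's kernel satisfying $\bar{\partial}P = \delta_{\Delta} - \Pi_{\mathbb{H}}$, viewed as a bilinear contraction through the symplectic pairing $\langle-,-\rangle$. Using the local PBW presentation from Remark \ref{PBW}, a generic element of $\tilde{C}^{\mathrm{ch}}(X,\mathcal{A})_{\mathcal{Q}}$ is a sum of tensors $\eta\cdot a_{1}\boxtimes\cdots\boxtimes a_{|T|}$ in which every $a_{i}$ is a polynomial in symbols $(t-z_{i})^{-n-1}\otimes e$; I would define $\mathrm{Tr}_{\mathcal{T}_E}$ on such a chain by summing over matchings of these symbol slots, contracting each matched pair using $P$ evaluated at the corresponding pair of insertion points, replacing each unmatched slot by $\Pi_{\mathbb{H}}$ of its argument so that it becomes a generator of $O_{\mathrm{BV}}=O(\mathbb{H}(X,E))$, and finally integrating over $X^{|T|}$ against $\eta$. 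Since each $a_{i}$ has finite PBW degree, only finitely many graphs contribute. To ensure that this recipe is intrinsic on a general Riemann surface, I would first define the trace at the level of the Lie$*$ chain complex (\ref{ChainLie}) and lift it to $\tilde{C}^{\mathrm{ch}}(X,\mathcal{A})_{\mathcal{Q}}$ using the universal property (\ref{UniversalProp}) and the factorization data $\mathbf{U}_{X^{I}}(\mathcal{L})$ entering the chiral product (\ref{ChiralProd}).

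Next, I would verify the QME $(d^{\mathrm{ch}}_{\mathcal{A}} + \Delta_{\mathrm{BV}})\,\mathrm{Tr}_{\mathcal{T}_E} = 0$ graph by graph. Integration by parts transfers the Dolbeault piece $\bar{\partial}$ of $d^{\mathrm{ch}}_{\mathcal{A}}$ onto propagator edges, and the defining identity $\bar{\partial}P = \delta_{\Delta} - \Pi_{\mathbb{H}}$ then produces exactly two kinds of terms: a diagonal delta that collides two insertion points and matches the $d_{\mu}$-contribution through the explicit chiral product formula (\ref{ChiralProd}), and a harmonic projector that pinches two external legs, reproducing the action of $\Delta_{\mathrm{BV}}$ because the $(-1)$-shifted symplectic form on $\mathbb{H}(X,E)$ used to define $\Delta_{\mathrm{BV}}$ is the restriction of $\int_{X}\langle-,-\rangle$ that also governs propagator contractions. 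The de Rham piece $d_{\mathrm{DR}}$ is absorbed by the compatibility of $P$ with the holomorphic connection, visible at the Lie$*$ level; the homotopy-colimit differential $d_{\mathrm{ho}}$ drops out because every ingredient in the Feynman recipe is built functorially in $I\in\mathcal{S}$.

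Finally, I would deduce the quasi-isomorphism claim using the chiral PBW filtration of Remark \ref{PBW}. On the associated graded $\mathrm{gr}\,\mathcal{A} \simeq \mathrm{Sym}\,E_{\mathcal{D}}$ the symplectic contraction vanishes, so no internal propagator edge survives and $\mathrm{gr}\,\mathrm{Tr}_{\mathcal{T}_E}$ reduces to projecting each $E_{\mathcal{D}}$-insertion by $\Pi_{\mathbb{H}}$ and integrating $\eta$. Under the Hodge identification of $R\Gamma_{\mathrm{DR}}(X,E_{\mathcal{D}})$ with $\mathbb{H}(X,E)$, this is exactly the inverse of the classical chiral HKR quasi-isomorphism $\mathrm{HKR}_{\mathrm{ch}}:\mathrm{Sym}(R\Gamma_{\mathrm{DR}}(X,E_{\mathcal{D}})[1]) \xrightarrow{\sim} C^{\mathrm{ch}}(X,\mathrm{Sym}\,E_{\mathcal{D}})$ of Beilinson--Drinfeld recalled in the introduction. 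A spectral-sequence argument along the PBW filtration, which is exhaustive and degree-bounded in each chain degree, then upgrades this to a quasi-isomorphism in the quantum case. The main obstacle I anticipate is not the QME identity itself but the intrinsicness of the Feynman recipe: unlike the elliptic-curve case of \cite{gui2021elliptic}, on a general Riemann surface $\mathcal{A}$ is a non-trivial $\mathcal{D}_{X}$-module whose chiral product has complicated coordinate-change corrections (already visible in Remark \ref{PBW}), and rather than defining the trace directly on $\mathcal{A}$ it seems essential to go through the Lie$*$ envelope so that naturality under change of coordinates is automatic.
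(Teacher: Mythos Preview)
Your overall strategy---build the trace first on Lie$*$ chains, extend to the chiral envelope, verify the QME via the propagator identity, and deduce the quasi-isomorphism from the PBW filtration and the classical chiral HKR---is the paper's strategy, and your closing diagnosis (that coordinate-invariance forces one to route through the Lie$*$ data rather than work directly on $\mathcal{A}$) is exactly right.

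The gap is in how you propose to pass from the Lie$*$ trace to a trace on $\tilde{C}^{\mathrm{ch}}(X,\mathcal{A})_{\mathcal{Q}}$. The universal property (\ref{UniversalProp}) classifies \emph{chiral-algebra} morphisms out of $\mathscr{U}(\mathcal{L})$; it does not produce maps out of the chiral chain complex, and the natural inclusion of chain complexes goes from Lie$*$ chains into chiral chains, which is the wrong direction for what you need. The paper's mechanism is concrete and runs the other way: every section $v$ of $\mathcal{A}$ is written (non-canonically, via a partition of unity) as an iterated chiral product $v=\vec{\mu}_{\mathcal{A}}(\tilde{v})$ with $\tilde{v}\in\Omega^{0,\bullet}(X^{n},\mathcal{L}^{\flat\boxtimes n}(*\Delta))$, and one \emph{defines} the normal-ordering map by $\mathcal{W}^{\mathbf{v}}(v;\tilde{v}):=\vec{\mu}_{\mathrm{Sym}}(e^{\partial_{P}}\tilde{v})$. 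The substantive step is then proving independence of the lift $\tilde{v}$, and this is done via a chiral Wick theorem: splitting the Szeg\H{o} kernel locally as $P=\mathfrak{P}+\mathfrak{Q}$ (singular plus regular), one shows $\tau^{z}_{U}\,\mu_{\mathcal{A}}=\mu_{\mathrm{Sym}}\circ e^{\mathfrak{P}_{\mathbf{sing}}}\circ\tau^{z\,\boxtimes 2}_{U}$ and that the full Feynman operator $e^{\mathfrak{P}_{\mathbf{sing}}+\mathfrak{Q}_{\mathbf{reg}}}$ intertwines $\mu_{\mathcal{A}}$ with $\mu_{\mathrm{Sym}}$. That intertwining property is precisely what absorbs the lower-order terms generated by the coordinate changes in Remark~\ref{PBW}, and it is not a formal consequence of (\ref{UniversalProp}) or of the factorization data alone.

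A secondary point: your propagator satisfies $\bar{\partial}P=\delta_{\Delta}-\Pi_{\mathbb{H}}$, i.e.\ it is the full Dolbeault Green's kernel, and you propose that the $\delta_{\Delta}$ term reproduces $d_{\mu}$ after integration by parts. The paper instead uses the Szeg\H{o} kernel, a \emph{holomorphic} section of $E\boxtimes E(*\Delta)$ with a first-order pole along $\Delta$, whose $\bar{\partial}$ contains only the harmonic zero-mode pieces (\ref{ZeroModes}); the role of your $\delta_{\Delta}$ is played by the residue map $\mu_{\omega}$ inside the unit-chiral trace $\mathrm{tr}_{\omega}$. This choice is what makes the Feynman integrand live in $\Omega^{0,\bullet}(X^{n},\omega_{X^{n}}(*\Delta))$, compatible with the $*\Delta$-poles already present in the chiral chain complex, and avoids having to regularize products of distributional kernels.
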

 This is proved in the next section.

\section{Construction of the trace map}

We prove the main theorem (Theorem \ref{MainThmIntro}) in this section. Since the chiral algebra $\mathcal{A}=\mathscr{U}(\mathcal{L})^{\flat}$ has a complicated coordinate change formula, the Feynman diagram construction in \cite{gui2021elliptic} which works for the genus 1 case and trivial vector bundles does not apply directly. One can first write the Feynman diagram formula using local coordinates and then check that the expression is well defined under the coordinate change. Here we take another strategy. First notice that the Feynman diagram is easy to define for linear field insertions. Then by presenting the element in $\mathcal{A}$ as an iterated chiral product of linear fields, we can define the Feynman diagram for general operator insertions. Finally, we use some results in \cite{gui2021elliptic} to prove that this construction is independent of the presentation we choose. For simplicity of notation, we work with purely even holomorphic bundles but the proof works for super case as well.

Throughout this section, $X$ denotes a compact smooth Riemann surface and $E$ is a holomorphic Hermitian vector bundle over $X$ equipped with a symplectic pairing $E\otimes_{\mathcal{O}_X}E\rightarrow \omega_X$. The chiral Weyl algebra $\mathcal{A}=\mathscr{U}(\mathcal{L})^{\flat}$ is defined in Definition \ref{DefnChiralWeyl}, where $\mathcal{L}=E_{\mathcal{D}}=E\otimes_{\mathcal{O}_X}\mathcal{D}_X$. The chiral operation of $\mathcal{A}$ (resp. $\mathrm{Sym} \ \mathcal{L}$) is denoted by $\mu_{\mathcal{A}}$ (resp. $\mu_{\mathrm{Sym}}$). For any vector bundle $F$ over $X^n=X\times\cdots\times X$, the notation $F(*\Delta)$ always means that the sheaf of sections of $F$ with poles along the big diagonal.
\subsection{Trace map on Lie* algebras}

In this section, we construct a trace map on the complex $\tilde{C}^{\mathrm{Lie}}(X,\mathcal{L})_{\mathcal{Q}}$ (See (\ref{ChainLie})). Since we only need to deal with linear fields (that is, $\mathcal{L}\subset \mathcal{A}=\mathscr{U}(\mathcal{L})^{\flat})$, the construction is much easier than the one for the chiral envelope. We will use this construction in the later sections.

Given a section $e\in \Omega^{0,\bullet}(X,E)$. We can define the operation
$$
\partial_e:\Omega^{0,\bullet}(X,E_{\mathcal{D}})\rightarrow \Omega^{0,\bullet}(X,\omega_X)
$$
induced by the following sequence of maps
$$
E\otimes_{\mathcal{O}_X} E_{\mathcal{D}}=E\otimes_{\mathcal{O}_X} E\otimes_{\mathcal{O}_X} \mathcal{D}_X\rightarrow \omega_X\otimes_{\mathcal{O}_X} \mathcal{D}_X\rightarrow \omega_X\otimes_{\mathcal{D}_X} \mathcal{D}_X=\omega_X,
$$
here we use the pairing $E\otimes E\rightarrow\omega_X$. Furthermore, we see that $\partial_e$ is compatible with the right $\mathcal{D}_X$-module structure as the pairing is on the left. One can extend the operator $\partial_e$ to an operator on $\Omega^{0,\bullet}(X^n,\mathcal{L}^{\flat\boxtimes n})=\Omega^{0,\bullet}(X^n,(E_{\mathcal{D}}\oplus \omega_X)^{\boxtimes n})$
$$
(\partial_e)_n:\Omega^{0,\bullet}(X^n,\mathcal{L}^{\flat\boxtimes n})\rightarrow \Omega^{0,\bullet}(X^n,\mathcal{L}^{\flat\boxtimes n}),
$$
by setting
$$
(\partial_e)_n:=\sum_{i=1}^n1\boxtimes \cdots\boxtimes  \underbrace{\partial_e}_{\text{i-th}}\boxtimes\cdots\boxtimes 1
$$
and $\partial_e$ acts trivially on $\omega_X\subset \mathcal{L}^{\flat}$. By abuse of notation, we write $\partial_e$ instead of $(\partial_e)_n$.

\begin{rem}
    This operation can be easily extended to $\Omega^{0,\bullet}(X,\mathrm{Sym}\ \mathcal{L})$ by Leibniz rule.
    \end{rem}

Similarly, we can define an operation
$$
\partial_K:\Omega^{0,\bullet}(X^2,\mathcal{L}^{\boxtimes 2})\rightarrow \Omega^{0,\bullet}(X^2,\omega_{X^2}(*\Delta)).
$$
for a kernel function $K\in \Omega^{0,\bullet}(X^2,E\boxtimes E(*\Delta))$. As before, we can extend it to an operator on $\Omega^{0,\bullet}(X^n,\mathcal{L}^{\flat\boxtimes n}(*\Delta))$
$$
\partial_K:\Omega^{0,\bullet}(X^n,\mathcal{L}^{\flat\boxtimes n}(*\Delta))\rightarrow \Omega^{0,\bullet}(X^n,\mathcal{L}^{\flat\boxtimes n}(*\Delta)).
$$

\begin{rem}
    Later we will use the bidifferential operator $\partial_K$ acting on the space $\Omega^{0,\bullet}(X^n,(\mathrm{Sym}\ \mathcal{L}^{\flat})^{\boxtimes n}(*\Delta))$.
\end{rem}

In this paper, we will mainly use the Szeg\"{o} kernel $P\in \Omega^{0,0}(X^2,E\boxtimes E(*\Delta))$ which depends on the Hermitian metric on $E$. From now on, we fix such a metric on $E$. The Szeg\"{o} kernel is the kernel function that inverts $\bar{\partial}$ on the orthogonal complement of $\mathbb{H}^0(X,E)$. Here $\mathbb{H}(X,E)=\mathbb{H}^0(X,E)\oplus \mathbb{H}^1(X,E)$ is the space of harmonic forms in $\Omega^{0,\bullet}(X,E)$ with respect to the Hermitian metric on $E$.

For more details about the Szeg\"{o} kernel, see \cite[Chapter 2]{fay1992kernel}.

Write
\begin{equation}\label{ZeroModes}
\bar{\partial}P(z_1,z_2)=\sum_{i,j} I^{ij}_1\mathbf{e}^1_i\boxtimes \mathbf{e}^0_j+\sum_{i,j} I^{ij}_2\mathbf{e}^0_i\boxtimes \mathbf{e}^1_j,
\end{equation}
here $\{\mathbf{e}^1_i\}$ (resp.$\{\mathbf{e}^0_i\}$) are harmonic basis of $\mathbb{H}^1(X,E)$ (resp. $\mathbb{H}^0(X,E)$). Notice that there is a perfect paring between $\mathbb{H}^1(X,E)$ and $\mathbb{H}^0(X,E)$ by
$$
\int_X\langle-,-\rangle: \mathbb{H}^0(X,E)\otimes \mathbb{H}^1(X,E)\rightarrow \mathbb{C}.
$$

Using the formula
$$
\int_X\langle P(z_1,z_2),\bar{\partial}_{z_2}e(z_2)\rangle=e(z_1)-\int_X\langle \bar{\partial}_{z_2}P(z_1,z_2),e(z_2)\rangle,\quad e\in \Omega^0(X,E).
$$
We find out that the matrix $I^t_1=I_2=[I_2^{ij}]$ is the inverse of $[\int_X\langle \mathbf{e}^1_j,\mathbf{e}^0_i\rangle]$.  The BV operator $\Delta_{\mathrm{BV}}$ on $O_{\mathrm{BV}}=\mathcal{O}(\mathbb{H}(X,E))$ is given by the following formula
$$
\Delta_{\mathrm{BV}}=\sum_{i,j}I^{ij}_1\partial_{\mathbf{e}^0_i}\cdot\partial_{\mathbf{e}^1_j}.
$$

Now we are ready to construct the trace map on the Lie* algebra $\mathcal{L}^{\flat}=E_{\mathcal{D}}\oplus\omega_X$. First, we define

$$
\mathcal{W}_{\mathrm{Lie}}:\Omega^{0,\bullet}(X^n,\mathcal{L}^{\flat\boxtimes n})\xrightarrow{e^{\partial_P}} \Omega^{0,\bullet}(X^n,\mathcal{L}^{\flat\boxtimes n}(*\Delta)).
$$

Denote the projection map
$$
\Omega^{0,\bullet}(X^n,\mathcal{L}^{\flat\boxtimes n}(*\Delta))\rightarrow  \Omega^{0,\bullet}(X^n,\omega_{X^n}(*\Delta))
$$
by $\mathbf{p}$ which is induced by projection $\mathcal{L}^{\flat}=E_{\mathcal{D}}\oplus\omega_X\rightarrow \omega_X$. By \cite[pp315,Section 4.3.3.]{beilinson2004chiral}, there is a trace map
$$
\mathrm{tr}_{\omega}:\tilde{C}^{\mathrm{ch}}(X,\omega_X)_{\mathcal{Q}} \rightarrow \mathbb{C}.
$$
Furthermore, the above map is a quasi-isomorphism. In particular, we have trace map on $\Omega^{0,\bullet}(X^n,\omega_{X^n}(*\Delta))$ by the following  sequence of maps
$$
\Omega^{0,\bullet}(X^n,\omega_{X^n}(*\Delta))\rightarrow \tilde{C}^{\mathrm{ch}}(X,\omega_X)_{\mathcal{Q}} \xrightarrow{\sim} \mathbb{C}.
$$

By abuse of notation, we continue to write $\mathrm{tr}_{\omega}$ for the above map.

\begin{rem}
    The above trace map can be viewed as a renormalized integral for singular differential forms on  $X^n$ with poles along the diagonal. It is shown in \cite[Section 2.3.]{gui2021elliptic}, this trace map coincides with the regularized integral introduced in \cite{li2021regularized}. For a deeper discussion of the regularized integral, we refer the reader to \cite{li2021regularized}.
\end{rem}

Then we construct the trace map as follows, define
$$
\mathrm{Tr}_{\mathrm{Lie}}: \Omega^{0,\bullet}(X^n,\mathcal{L}^{\flat\boxtimes n})\rightarrow O_{\mathrm{BV}}.
$$
Here
$$
\mathrm{Tr}_{\mathrm{Lie}}(\eta)[\mathbf{e}]:=\sum_{k\geq 0}\frac{1}{k!}\mathrm{tr}_{\omega}\circ \mathbf{p}\left(\partial^k_{\mathbf{e}}\mathcal{W}_{\mathrm{Lie}}(\eta)\right),\quad \eta\in \Omega^{0,\bullet}(X^n,\mathcal{L}^{\flat\boxtimes n}), \mathbf{e}\in \mathbb{H}(X,E) .
$$
Since all the construction is compatible with the $\mathcal{D}$-module structure, the above map extends naturally to a map $\mathrm{Tr}_{\mathrm{Lie}}:\tilde{C}^{\mathrm{Lie}}(X,\mathcal{L})_{\mathcal{Q}}\rightarrow O_{\mathrm{BV}}$ using the same formula. See Fig. \ref{TraceLie} for an intuitive illustration of the construction.

\begin{figure}
    \centering

\tikzset{every picture/.style={line width=0.75pt}} 

\begin{tikzpicture}[x=0.75pt,y=0.75pt,yscale=-1,xscale=1]

\draw  [fill={rgb, 255:red, 74; green, 144; blue, 226 }  ,fill opacity=1 ] (362.14,167.1) .. controls (362.14,165.82) and (363.13,164.79) .. (364.35,164.79) .. controls (365.58,164.79) and (366.57,165.82) .. (366.57,167.1) .. controls (366.57,168.38) and (365.58,169.42) .. (364.35,169.42) .. controls (363.13,169.42) and (362.14,168.38) .. (362.14,167.1) -- cycle ;
\draw  [fill={rgb, 255:red, 74; green, 144; blue, 226 }  ,fill opacity=1 ] (403.64,159.54) .. controls (403.64,158.26) and (404.63,157.22) .. (405.85,157.22) .. controls (407.07,157.22) and (408.06,158.26) .. (408.06,159.54) .. controls (408.06,160.82) and (407.07,161.86) .. (405.85,161.86) .. controls (404.63,161.86) and (403.64,160.82) .. (403.64,159.54) -- cycle ;
\draw   (239.97,132.33) .. controls (277.32,117.36) and (316.9,156.14) .. (362.3,153.37) .. controls (407.7,150.6) and (498.51,77.2) .. (518.3,165.83) .. controls (538.09,254.47) and (387.91,217.08) .. (348.33,211.54) .. controls (308.75,206) and (251.61,240.36) .. (226.1,224) .. controls (200.59,207.65) and (202.62,147.3) .. (239.97,132.33) -- cycle ;
\draw  [draw opacity=0] (489.57,166.61) .. controls (484.54,176.46) and (472.82,183.72) .. (458.94,184.4) .. controls (443.77,185.14) and (430.62,177.8) .. (425.76,166.97) -- (457.99,156.78) -- cycle ; \draw   (489.57,166.61) .. controls (484.54,176.46) and (472.82,183.72) .. (458.94,184.4) .. controls (443.77,185.14) and (430.62,177.8) .. (425.76,166.97) ;
\draw  [draw opacity=0] (432.91,177.44) .. controls (437.04,171.54) and (445.52,167.37) .. (455.38,167.14) .. controls (466.3,166.89) and (475.68,171.55) .. (479.4,178.32) -- (455.67,185.16) -- cycle ; \draw   (432.91,177.44) .. controls (437.04,171.54) and (445.52,167.37) .. (455.38,167.14) .. controls (466.3,166.89) and (475.68,171.55) .. (479.4,178.32) ;
\draw  [draw opacity=0] (294.35,174.98) .. controls (289.87,185.81) and (278.52,193.89) .. (264.99,194.63) .. controls (250.34,195.43) and (237.69,187.36) .. (233.37,175.59) -- (264.09,165.71) -- cycle ; \draw   (294.35,174.98) .. controls (289.87,185.81) and (278.52,193.89) .. (264.99,194.63) .. controls (250.34,195.43) and (237.69,187.36) .. (233.37,175.59) ;
\draw  [draw opacity=0] (241.01,186.06) .. controls (244.72,179.5) and (252.98,174.81) .. (262.63,174.56) .. controls (273.21,174.3) and (282.27,179.45) .. (285.57,186.85) -- (262.9,193.44) -- cycle ; \draw   (241.01,186.06) .. controls (244.72,179.5) and (252.98,174.81) .. (262.63,174.56) .. controls (273.21,174.3) and (282.27,179.45) .. (285.57,186.85) ;
\draw  [fill={rgb, 255:red, 74; green, 144; blue, 226 }  ,fill opacity=1 ] (288.02,201.69) .. controls (288.02,200.41) and (289.01,199.37) .. (290.24,199.37) .. controls (291.46,199.37) and (292.45,200.41) .. (292.45,201.69) .. controls (292.45,202.97) and (291.46,204.01) .. (290.24,204.01) .. controls (289.01,204.01) and (288.02,202.97) .. (288.02,201.69) -- cycle ;
\draw  [fill={rgb, 255:red, 74; green, 144; blue, 226 }  ,fill opacity=1 ] (250.81,148.51) .. controls (250.81,147.23) and (251.8,146.2) .. (253.02,146.2) .. controls (254.24,146.2) and (255.24,147.23) .. (255.24,148.51) .. controls (255.24,149.79) and (254.24,150.83) .. (253.02,150.83) .. controls (251.8,150.83) and (250.81,149.79) .. (250.81,148.51) -- cycle ;
\draw  [fill={rgb, 255:red, 74; green, 144; blue, 226 }  ,fill opacity=1 ] (453.48,211.29) .. controls (453.48,210.01) and (454.47,208.97) .. (455.69,208.97) .. controls (456.91,208.97) and (457.9,210.01) .. (457.9,211.29) .. controls (457.9,212.57) and (456.91,213.61) .. (455.69,213.61) .. controls (454.47,213.61) and (453.48,212.57) .. (453.48,211.29) -- cycle ;
\draw  [fill={rgb, 255:red, 74; green, 144; blue, 226 }  ,fill opacity=1 ] (470.76,196.76) .. controls (470.76,195.48) and (471.75,194.44) .. (472.97,194.44) .. controls (474.2,194.44) and (475.19,195.48) .. (475.19,196.76) .. controls (475.19,198.04) and (474.2,199.07) .. (472.97,199.07) .. controls (471.75,199.07) and (470.76,198.04) .. (470.76,196.76) -- cycle ;
\draw  [fill={rgb, 255:red, 74; green, 144; blue, 226 }  ,fill opacity=1 ] (337.71,183.91) .. controls (337.71,182.63) and (338.7,181.59) .. (339.93,181.59) .. controls (341.15,181.59) and (342.14,182.63) .. (342.14,183.91) .. controls (342.14,185.19) and (341.15,186.23) .. (339.93,186.23) .. controls (338.7,186.23) and (337.71,185.19) .. (337.71,183.91) -- cycle ;
\draw  [fill={rgb, 255:red, 74; green, 144; blue, 226 }  ,fill opacity=1 ] (409.16,198.68) .. controls (409.16,197.4) and (410.15,196.36) .. (411.37,196.36) .. controls (412.59,196.36) and (413.58,197.4) .. (413.58,198.68) .. controls (413.58,199.96) and (412.59,201) .. (411.37,201) .. controls (410.15,201) and (409.16,199.96) .. (409.16,198.68) -- cycle ;
\draw [color={rgb, 255:red, 206; green, 69; blue, 44 }  ,draw opacity=1 ][fill={rgb, 255:red, 255; green, 255; blue, 255 }  ,fill opacity=1 ][line width=2.25]    (292.08,200.46) .. controls (299.08,193.46) and (316.08,177.46) .. (337.71,183.91) ;
\draw [color={rgb, 255:red, 65; green, 117; blue, 5 }  ,draw opacity=1 ][line width=2.25]    (209.11,111.12) -- (251.02,146.83) ;
\draw [color={rgb, 255:red, 206; green, 69; blue, 44 }  ,draw opacity=1 ][fill={rgb, 255:red, 255; green, 255; blue, 255 }  ,fill opacity=1 ][line width=2.25]    (365.57,165.1) .. controls (370.57,158.1) and (391.57,158.1) .. (403.12,159.35) ;
\draw [color={rgb, 255:red, 206; green, 69; blue, 44 }  ,draw opacity=1 ][fill={rgb, 255:red, 255; green, 255; blue, 255 }  ,fill opacity=1 ][line width=2.25]    (413.58,199.68) .. controls (430.58,202.51) and (439.85,225.67) .. (454.48,213.29) ;
\draw [color={rgb, 255:red, 65; green, 117; blue, 5 }  ,draw opacity=1 ][line width=2.25]    (475.19,196.76) -- (530.08,200.29) ;
\draw  [fill={rgb, 255:red, 74; green, 144; blue, 226 }  ,fill opacity=1 ] (21.81,20.51) .. controls (21.81,19.23) and (22.8,18.2) .. (24.02,18.2) .. controls (25.24,18.2) and (26.24,19.23) .. (26.24,20.51) .. controls (26.24,21.79) and (25.24,22.83) .. (24.02,22.83) .. controls (22.8,22.83) and (21.81,21.79) .. (21.81,20.51) -- cycle ;
\draw [color={rgb, 255:red, 128; green, 128; blue, 128 }  ,draw opacity=0.68 ][line width=2.25]    (26.11,21.12) -- (72.08,21.29) ;
\draw  [fill={rgb, 255:red, 74; green, 144; blue, 226 }  ,fill opacity=1 ] (548.22,30.96) .. controls (548.22,30.37) and (548.67,29.89) .. (549.22,29.89) .. controls (549.77,29.89) and (550.22,30.37) .. (550.22,30.96) .. controls (550.22,31.55) and (549.77,32.03) .. (549.22,32.03) .. controls (548.67,32.03) and (548.22,31.55) .. (548.22,30.96) -- cycle ;
\draw  [fill={rgb, 255:red, 74; green, 144; blue, 226 }  ,fill opacity=1 ] (566.97,27.47) .. controls (566.97,26.88) and (567.41,26.4) .. (567.97,26.4) .. controls (568.52,26.4) and (568.96,26.88) .. (568.96,27.47) .. controls (568.96,28.06) and (568.52,28.54) .. (567.97,28.54) .. controls (567.41,28.54) and (566.97,28.06) .. (566.97,27.47) -- cycle ;
\draw   (493.04,14.9) .. controls (509.91,7.99) and (527.79,25.9) .. (548.3,24.62) .. controls (568.8,23.34) and (609.82,-10.55) .. (618.76,30.37) .. controls (627.7,71.3) and (559.86,54.03) .. (541.99,51.48) .. controls (524.11,48.92) and (498.3,64.78) .. (486.77,57.23) .. controls (475.25,49.68) and (476.17,21.82) .. (493.04,14.9) -- cycle ;
\draw  [draw opacity=0] (605.83,30.64) .. controls (603.59,35.24) and (598.26,38.63) .. (591.95,38.95) .. controls (585.06,39.29) and (579.1,35.86) .. (576.93,30.82) -- (591.52,26.19) -- cycle ; \draw   (605.83,30.64) .. controls (603.59,35.24) and (598.26,38.63) .. (591.95,38.95) .. controls (585.06,39.29) and (579.1,35.86) .. (576.93,30.82) ;
\draw  [draw opacity=0] (580.15,35.8) .. controls (581.99,33.04) and (585.85,31.08) .. (590.34,30.98) .. controls (595.3,30.86) and (599.56,33.04) .. (601.22,36.19) -- (590.47,39.3) -- cycle ; \draw   (580.15,35.8) .. controls (581.99,33.04) and (585.85,31.08) .. (590.34,30.98) .. controls (595.3,30.86) and (599.56,33.04) .. (601.22,36.19) ;
\draw  [draw opacity=0] (517.64,34.5) .. controls (515.64,39.55) and (510.49,43.33) .. (504.34,43.67) .. controls (497.69,44.04) and (491.95,40.28) .. (490.03,34.81) -- (503.93,30.32) -- cycle ; \draw   (517.64,34.5) .. controls (515.64,39.55) and (510.49,43.33) .. (504.34,43.67) .. controls (497.69,44.04) and (491.95,40.28) .. (490.03,34.81) ;
\draw  [draw opacity=0] (493.47,39.78) .. controls (495.13,36.72) and (498.88,34.52) .. (503.27,34.4) .. controls (508.08,34.28) and (512.18,36.69) .. (513.66,40.13) -- (503.4,43.12) -- cycle ; \draw   (493.47,39.78) .. controls (495.13,36.72) and (498.88,34.52) .. (503.27,34.4) .. controls (508.08,34.28) and (512.18,36.69) .. (513.66,40.13) ;
\draw  [fill={rgb, 255:red, 74; green, 144; blue, 226 }  ,fill opacity=1 ] (514.74,46.93) .. controls (514.74,46.34) and (515.19,45.86) .. (515.74,45.86) .. controls (516.29,45.86) and (516.74,46.34) .. (516.74,46.93) .. controls (516.74,47.52) and (516.29,48) .. (515.74,48) .. controls (515.19,48) and (514.74,47.52) .. (514.74,46.93) -- cycle ;
\draw  [fill={rgb, 255:red, 74; green, 144; blue, 226 }  ,fill opacity=1 ] (497.93,22.38) .. controls (497.93,21.79) and (498.38,21.31) .. (498.93,21.31) .. controls (499.48,21.31) and (499.93,21.79) .. (499.93,22.38) .. controls (499.93,22.97) and (499.48,23.45) .. (498.93,23.45) .. controls (498.38,23.45) and (497.93,22.97) .. (497.93,22.38) -- cycle ;
\draw  [fill={rgb, 255:red, 74; green, 144; blue, 226 }  ,fill opacity=1 ] (589.48,51.36) .. controls (589.48,50.77) and (589.93,50.29) .. (590.48,50.29) .. controls (591.03,50.29) and (591.48,50.77) .. (591.48,51.36) .. controls (591.48,51.95) and (591.03,52.43) .. (590.48,52.43) .. controls (589.93,52.43) and (589.48,51.95) .. (589.48,51.36) -- cycle ;
\draw  [fill={rgb, 255:red, 74; green, 144; blue, 226 }  ,fill opacity=1 ] (597.29,44.65) .. controls (597.29,44.06) and (597.73,43.58) .. (598.29,43.58) .. controls (598.84,43.58) and (599.29,44.06) .. (599.29,44.65) .. controls (599.29,45.24) and (598.84,45.72) .. (598.29,45.72) .. controls (597.73,45.72) and (597.29,45.24) .. (597.29,44.65) -- cycle ;
\draw  [fill={rgb, 255:red, 74; green, 144; blue, 226 }  ,fill opacity=1 ] (537.19,38.72) .. controls (537.19,38.13) and (537.64,37.65) .. (538.19,37.65) .. controls (538.74,37.65) and (539.19,38.13) .. (539.19,38.72) .. controls (539.19,39.31) and (538.74,39.79) .. (538.19,39.79) .. controls (537.64,39.79) and (537.19,39.31) .. (537.19,38.72) -- cycle ;
\draw  [fill={rgb, 255:red, 74; green, 144; blue, 226 }  ,fill opacity=1 ] (569.46,45.54) .. controls (569.46,44.95) and (569.91,44.47) .. (570.46,44.47) .. controls (571.01,44.47) and (571.46,44.95) .. (571.46,45.54) .. controls (571.46,46.13) and (571.01,46.61) .. (570.46,46.61) .. controls (569.91,46.61) and (569.46,46.13) .. (569.46,45.54) -- cycle ;
\draw [color={rgb, 255:red, 128; green, 128; blue, 128 }  ,draw opacity=0.69 ][line width=2.25]    (479.09,5.11) -- (498.03,21.6) ;
\draw [color={rgb, 255:red, 128; green, 128; blue, 128 }  ,draw opacity=0.68 ][line width=2.25]    (599.29,44.65) -- (624.08,46.28) ;
\draw [color={rgb, 255:red, 128; green, 128; blue, 128 }  ,draw opacity=0.68 ][line width=2.25]    (514.74,47.93) -- (501.08,64.29) ;
\draw [color={rgb, 255:red, 128; green, 128; blue, 128 }  ,draw opacity=0.68 ][line width=2.25]    (590.48,52.43) -- (600.08,67.29) ;
\draw [color={rgb, 255:red, 128; green, 128; blue, 128 }  ,draw opacity=0.68 ][line width=2.25]    (570.46,46.61) -- (570.08,70.29) ;
\draw [color={rgb, 255:red, 128; green, 128; blue, 128 }  ,draw opacity=0.68 ][line width=2.25]    (538.19,39.79) -- (535.81,64.47) ;
\draw [color={rgb, 255:red, 128; green, 128; blue, 128 }  ,draw opacity=0.68 ][line width=2.25]    (546.08,6.29) -- (549.3,30.62) ;
\draw [color={rgb, 255:red, 128; green, 128; blue, 128 }  ,draw opacity=0.68 ][line width=2.25]    (568.96,26.47) -- (582.08,5.29) ;
\draw [color={rgb, 255:red, 65; green, 117; blue, 5 }  ,draw opacity=1 ][line width=2.25]    (26.11,42.12) -- (62.08,42.29) ;
\draw [color={rgb, 255:red, 206; green, 69; blue, 44 }  ,draw opacity=1 ][fill={rgb, 255:red, 255; green, 255; blue, 255 }  ,fill opacity=1 ][line width=2.25]    (25.57,74.1) .. controls (30.57,67.1) and (51.57,67.1) .. (63.12,68.35) ;

\draw (53,161.4) node [anchor=north west][inner sep=0.75pt]    {$\mathrm{Tr}_{\mathrm{Lie}}( \eta )[\mathbf{e}] =\sum \mathrm{tr}_{\omega }$};
\draw (102,15.4) node [anchor=north west][inner sep=0.75pt]  [font=\scriptsize]  {$\mathcal{L} =E_{\mathcal{D}}$};
\draw (369,28.4) node [anchor=north west][inner sep=0.75pt]  [font=\scriptsize]  {$\Omega ^{0,\bullet }\left( X^{n} ,\mathcal{L}^{\boxtimes n}\right) \ni \eta :$};
\draw (92,37.4) node [anchor=north west][inner sep=0.75pt]  [font=\scriptsize]  {$\mathbf{e} \ \in \mathbb{H}( X,E)$};
\draw (83,63.4) node [anchor=north west][inner sep=0.75pt]  [font=\scriptsize]  {$P\in \Omega ^{0,0}\left( X^{2} ,E\boxtimes E( *\Delta )\right)\text{ is the Szego kernel}$};

\end{tikzpicture}
    \caption{$\mathrm{Tr}_{\mathrm{Lie}}(\eta)[\mathbf{e}]$}
    \label{TraceLie}
\end{figure}

We have the following main theorem of this section.

\begin{thm}
    The map
    $$
    \mathrm{Tr}_{Lie}:(\tilde{C}^{\mathrm{Lie}}(X,\mathcal{L})_{\mathcal{Q}},d^{\mathrm{Lie}*}_{\mathcal{L}})\rightarrow (O_{\mathrm{BV}},-\Delta_{\mathrm{BV}})
    $$
    is a chain map.
\end{thm}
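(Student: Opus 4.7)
The plan is to decompose the differential $d^{\mathrm{Lie}*}_{\mathcal{L}}=d_{\mathrm{DR}}+\bar\partial+d_{\mathrm{Lie}}+d_{\mathrm{ho}}$ and verify compatibility with $-\Delta_{\mathrm{BV}}$ piece by piece. The overall approach is to push the differential through the three ingredients of $\mathrm{Tr}_{\mathrm{Lie}}$: the Feynman-diagram propagator $\mathcal{W}_{\mathrm{Lie}}=e^{\partial_P}$, the Taylor expansion $\sum_k\tfrac{1}{k!}\partial^k_{\mathbf{e}}$ accounting for harmonic insertions, and the residue-based trace $\mathrm{tr}_\omega\circ\mathbf{p}$ on the unit chiral algebra.

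The core algebraic identity is the graded commutator of $\bar\partial$ with the contraction-exponential. Since $[\bar\partial,\partial_P]=\partial_{\bar\partial P}$ and the second-order derivation $\partial_P$ commutes with the first-order derivation $\partial_{\bar\partial P}$, one obtains
$$\bar\partial\circ e^{\partial_P}=e^{\partial_P}\,\bar\partial+\partial_{\bar\partial P}\,e^{\partial_P}.$$
Substituting the harmonic expansion $\bar\partial P=\sum I_1^{ij}\mathbf{e}^1_i\boxtimes\mathbf{e}^0_j+\sum I_2^{ij}\mathbf{e}^0_i\boxtimes\mathbf{e}^1_j$ from (\ref{ZeroModes}), each application of $\partial_{\bar\partial P}$ replaces two $E_{\mathcal{D}}$-valued field insertions by paired harmonic basis vectors coupled through $I_1$ (resp.\ $I_2$). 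After applying $\sum_k\tfrac{1}{k!}\partial^k_{\mathbf{e}}$ and evaluating at $\mathbf{e}\in\mathbb{H}(X,E)$, these paired harmonic insertions reorganize into the action of $\Delta_{\mathrm{BV}}=\sum I_1^{ij}\partial_{\mathbf{e}^0_i}\partial_{\mathbf{e}^1_j}$ on $\mathrm{Tr}_{\mathrm{Lie}}(\eta)[\mathbf{e}]$; the $I_2$-term is accounted for by the relation $I_1^t=I_2$ together with the symmetry of $\partial_P$ under swapping the two field slots. This matches $\bar\partial$ with $-\Delta_{\mathrm{BV}}$ up to an overall sign coming from anticommuting $\bar\partial$ past the form degree produced by $\partial_{\bar\partial P}$.

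For the remaining pieces: $d_{\mathrm{DR}}$ is absorbed by the trace $\mathrm{tr}_\omega$ via the vanishing $\mathrm{tr}_\omega\circ d_{\mathrm{DR}}=0$ that is built into the quasi-isomorphism $\tilde C^{\mathrm{ch}}(X,\omega_X)_{\mathcal{Q}}\xrightarrow{\sim}\mathbb{C}$ of Beilinson-Drinfeld, since $\partial_P$ and $\partial_{\mathbf{e}}$ are $\mathcal{D}_X$-linear and hence commute through $d_{\mathrm{DR}}$. The Lie* bracket differential $d_{\mathrm{Lie}}$ pushes two $E_{\mathcal{D}}$-factors onto the diagonal with values in $\omega_X$ via the central-extension pairing; one checks that $e^{\partial_P}\circ d_{\mathrm{Lie}}$ produces exactly the same residue on the diagonal that $d_{\mathrm{Lie}}$ on $\tilde{C}^{\mathrm{ch}}(X,\omega_X)_{\mathcal{Q}}$ applied \emph{after} $e^{\partial_P}$ would produce, because the contact pattern of $P$ between two merged points, after projection $\mathbf{p}$, becomes precisely the residue map $\mu_\omega$ used to define $\mathrm{tr}_\omega$. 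Finally, $d_{\mathrm{ho}}$ commutes trivially with the whole construction since the latter is functorial in $I\in\mathcal{S}$ and the argument follows the bookkeeping in \cite{gui2021elliptic}.

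The main obstacle is the $\bar\partial$ step. One has to track carefully how the two harmonic factors produced by $\partial_{\bar\partial P}$ pair, under $\mathrm{tr}_\omega\circ\mathbf{p}$, against the external harmonic Taylor coefficients $\partial^k_{\mathbf{e}}$, and to recognize the result as $\Delta_{\mathrm{BV}}$ applied to the original trace. The combinatorics requires reindexing the Taylor series in $\mathbf{e}$ to absorb two new insertions per contraction, plus using the duality $\int_X\langle\mathbf{e}^1_j,\mathbf{e}^0_i\rangle=(I_1^{-1})^{ji}$ to match indices. Once this combinatorial identity is established, the $d_{\mathrm{Lie}}$ compatibility reduces to the residue formula for $\mu_\omega$ already used in the trace on the unit chiral algebra, and the theorem follows.
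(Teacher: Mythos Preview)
Your proposal is essentially correct and follows the same strategy as the paper's proof: the core identities are the commutator $\bar\partial\,e^{\partial_P}=e^{\partial_P}\bar\partial+\partial_{\bar\partial P}\,e^{\partial_P}$ together with the identification of $\partial_{\bar\partial P}$ with $\Delta_{\mathrm{BV}}$, and the factorization of the Lie* bracket as $\mu_\omega\circ\partial_P$. The paper packages these more compactly by proving the single intertwining relation $\mathcal{W}_{\mathrm{Lie}}\circ d^{\mathrm{Lie}*}_{\mathcal{L}}=(d^{\mathrm{ch}}_{\mathrm{Sym}}-\Delta_{\mathrm{BV}})\circ\mathcal{W}_{\mathrm{Lie}}$ and then invoking $\mathrm{tr}_\omega\circ d^{\mathrm{ch}}_\omega\circ\mathbf{p}=0$, rather than treating each summand of the differential in isolation.

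One caution about your piecewise decomposition: the trace $\mathrm{tr}_\omega$ is only guaranteed to annihilate the \emph{full} unit chiral differential $d^{\mathrm{ch}}_\omega=\bar\partial+d_{\mathrm{DR}}+d_{\mu_\omega}+d_{\mathrm{ho}}$, not each piece separately, so statements like ``$\mathrm{tr}_\omega\circ d_{\mathrm{DR}}=0$'' on their own are not what is used. The correct bookkeeping is that the $\bar\partial$, $d_{\mathrm{DR}}$, $d_{\mathrm{Lie}}$ and $d_{\mathrm{ho}}$ contributions, after pushing through $e^{\partial_P}$, $\partial^k_{\mathbf{e}}$ and $\mathbf{p}$, reassemble into $d^{\mathrm{ch}}_\omega$ plus the single $-\Delta_{\mathrm{BV}}$ term, and it is this reassembled $d^{\mathrm{ch}}_\omega$ that $\mathrm{tr}_\omega$ kills. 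Also, the identification $\partial_{\bar\partial P}\leftrightarrow\Delta_{\mathrm{BV}}$ is more direct than your ``reindexing'' suggests: since $\bar\partial P$ is already a finite sum of harmonic tensors with the $I_1$-coefficients, $\partial_{\bar\partial P}$ literally equals $\sum I_1^{ij}\partial_{\mathbf{e}^1_i}\partial_{\mathbf{e}^0_j}$ as an operator, and this commutes with the generating-function evaluation to become $\Delta_{\mathrm{BV}}$ on $O_{\mathrm{BV}}$ without further combinatorics.
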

\begin{proof}

Using (\ref{ZeroModes}), we obtain
\begin{align*}
    \mathrm{Tr}_{\mathrm{Lie}}(\bar{\partial}\eta)[\mathbf{e}]&=\sum_{k\geq 0}\frac{1}{k!}\mathrm{tr}_{\omega}\circ \mathbf{p}\left(\partial^k_{\mathbf{e}}\mathcal{W}_{\mathrm{Lie}}(\bar{\partial}\eta)\right)\\
    &=\sum_{k\geq 0}\frac{1}{k!}\mathrm{tr}_{\omega}\circ \mathbf{p}\left(\partial^k_{\mathbf{e}}(\bar{\partial}-\partial_{\bar{\partial}P(z_1,z_2)})\mathcal{W}_{\mathrm{Lie}}(\eta)\right)\\
    &=\sum_{k\geq 0}\frac{1}{k!}\mathrm{tr}_{\omega}\circ \mathbf{p}\left(\partial^k_{\mathbf{e}}(\bar{\partial}-\Delta_{\mathrm{BV}})\mathcal{W}_{\mathrm{Lie}}(\eta)\right)
\end{align*}

    Notice that by definition, the Lie* pairing
    $$
    \mathcal{L}\boxtimes \mathcal{L}\rightarrow \Delta_*\omega_X
    $$
coincides with the composition
$$
    \mathcal{L}\boxtimes \mathcal{L}\xrightarrow{\partial_P}    \omega_X \boxtimes \omega_X(*\Delta)\xrightarrow{\mu_{\omega}}\Delta_*\omega_X.
    $$
Then
\begin{align*}
    \mathrm{Tr}_{\mathrm{Lie}}(d^{\mathrm{Lie}*}_{\mathcal{L}}\eta)[\mathbf{e}]&=\sum_{k\geq 0}\frac{1}{k!}\mathrm{tr}_{\omega}\circ \mathbf{p}\left(\partial^k_{\mathbf{e}}\mathcal{W}_{\mathrm{Lie}}(d^{\mathrm{Lie}*}_{\mathcal{L}}\eta)\right)\\
    &=\sum_{k\geq 0}\frac{1}{k!}\mathrm{tr}_{\omega}\circ \mathbf{p}\left((d_{\mathrm{Sym}}^{\mathrm{ch}}-\Delta_{\mathrm{BV}})\partial^k_{\mathbf{e}}\mathcal{W}_{\mathrm{Lie}}(\eta)\right)\\
    &=\underbrace{\sum_{k\geq 0}\frac{1}{k!}\mathrm{tr}_{\omega}\circ d_{\omega}^{\mathrm{ch}}\mathbf{p}\left(\partial^k_{\mathbf{e}}\mathcal{W}_{\mathrm{Lie}}(\eta)\right)}_{=0}-\sum_{k\geq 0}\frac{1}{k!}\mathrm{tr}_{\omega}\circ \mathbf{p}\left(\Delta_{\mathrm{BV}}\partial^k_{\mathbf{e}}\mathcal{W}_{\mathrm{Lie}}(\eta)\right)\\
    &=-\left(\Delta_{\mathrm{BV}}\mathrm{Tr}_{\mathrm{Lie}}(\eta)\right)[\mathbf{e}].
\end{align*}

    .
\end{proof}

\subsection{Normal ordering map}
By Remark \ref{PBW}, we have locally defined identification once we choose  an open chart $U\subset X$ with a local coordinate $z$
$$
\tau^z_U: \mathcal{A}|_U\rightarrow \mathrm{Sym}\ \mathcal{L}|_U.
$$
By abuse of notation, we use the same notation for $\mathcal{A}^l|_U\rightarrow \mathrm{Sym}\ \mathcal{L}^l|_U$. However, this map is not well defined globally as the coordinate change formula of $\mathcal{A}$ is different from that of $\mathrm{Sym}\ \mathcal{L}$. The goal of this section is to construct a smooth (non-holomorphic)  but globally defined map
$$
\mathcal{W}^{\mathbf{v}}_P:\mathcal{A}\rightarrow \mathrm{Sym}\ \mathcal{L}
$$
using regularization of Feynman diagrams built out by the propagator $P$ (the Szeg\"{o} kernel). Roughly speaking, the combination of coordinate change formulas of the regularization and $\mathrm{Sym}\ \mathcal{L}$ gives the correct one of the chiral algebra $\mathcal{A}$.

To define the normal ordering map $\mathcal{W}^{\mathbf{v}}=\mathcal{W}^{\mathbf{v}}_P$, we first need a lemma.

\begin{lem}
    For any section $v\in\Omega^{0,\bullet}(X,\mathcal{A})$, we can find $\tilde{v}\in \Omega^{0,\bullet}(X^n,\mathcal{L}^{\flat\boxtimes n}(*\Delta))$  such that
    $$
    \Vec{\mu}_{\mathcal{A}}(\tilde{v})=v.
    $$
    Here $\Vec{\mu}_{\mathcal{A}}$ is the iterated chiral product
    \begin{equation}\label{IteratedChiral}
    \Vec{\mu}_{\mathcal{A}}(f\cdot v_1\boxtimes \cdots \boxtimes v_n)=\mu_{\mathcal{A}}(\cdots \mu_{\mathcal{A}}(\mu_{\mathcal{A}}(f\cdot v_1\boxtimes v_2)\boxtimes v_3)\boxtimes \cdots \boxtimes v_n)
    \end{equation}
    for $f\cdot v_1\boxtimes \cdots \boxtimes v_n\in \Omega^{0,\bullet}(X^n,\mathcal{L}^{\flat\boxtimes n}(*\Delta))$.
\end{lem}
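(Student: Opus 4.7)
\emph{Proof proposal.} The plan is to induct on the PBW filtration degree $N$ with $v \in \Omega^{0,\bullet}(X, \mathscr{U}(\mathcal{L})^{\flat}_N)$. By Remark~\ref{PBW} the chiral Weyl algebra admits the exhaustive filtration $\mathcal{A} = \bigcup_N \mathscr{U}(\mathcal{L})^{\flat}_N$ with canonical graded quotients $\mathrm{gr}^N \mathcal{A} \simeq \mathrm{Sym}^N \mathcal{L}$. The base case $N \le 1$ is immediate since $\mathscr{U}(\mathcal{L})^{\flat}_1 = \mathcal{L}^{\flat}$: take $n = 1$ and $\tilde{v} = v$.

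For the inductive step, let $\sigma_N(v) \in \Omega^{0,\bullet}(X, \mathrm{Sym}^N \mathcal{L})$ denote the symbol of $v$. The key technical step is a direct generalization of the explicit computation $\mu_{\mathcal{A}}(\tfrac{e_i \boxtimes e_j}{z_1 - z_2}) = \tfrac{e_i e_j}{(t-z)^2}|0\rangle + (\text{unit term})$ carried out at the end of Section~3: by induction on $N$, for any linear sections $l_1, \dots, l_N \in \Omega^{0,\bullet}(X, \mathcal{L})$ one has
\[
\vec{\mu}_{\mathcal{A}}\!\left( \frac{l_1 \boxtimes \cdots \boxtimes l_N \, dz_1 \cdots dz_N}{\prod_{1 \le i < N}(z_i - z_N)} \right) \in \Omega^{0,\bullet}(X, \mathscr{U}(\mathcal{L})^{\flat}_N),
\]
with symbol in $\mathrm{gr}^N \mathcal{A} \simeq \mathrm{Sym}^N \mathcal{L}$ equal to $l_1 \cdots l_N$ up to a nonzero combinatorial constant. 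Locally in a chart $U_\alpha$ of $X$, $\sigma_N(v)|_{U_\alpha}$ is a finite sum of such symmetrized products; gluing the corresponding local preimages via a smooth partition of unity $\{\rho_\alpha\}$ subordinate to a locally finite cover produces $\tilde{v}_0 \in \Omega^{0,\bullet}(X^N, \mathcal{L}^{\boxtimes N}(*\Delta))$ with $\vec{\mu}_{\mathcal{A}}(\tilde{v}_0)$ sharing the same symbol as $v$. Hence $v - \vec{\mu}_{\mathcal{A}}(\tilde{v}_0) \in \Omega^{0,\bullet}(X, \mathscr{U}(\mathcal{L})^{\flat}_{N-1})$, and the inductive hypothesis yields $\tilde{v}_1$ on some $X^m$ with $m \le N - 1$ such that $\vec{\mu}_{\mathcal{A}}(\tilde{v}_1) = v - \vec{\mu}_{\mathcal{A}}(\tilde{v}_0)$. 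Padding $\tilde{v}_1$ to $X^N$ by adjoining $N - m$ unit factors $\mathbf{1}_j \in \omega_X \subset \mathcal{L}^{\flat}$ in the trailing positions with pole factors $1/(z_{j-1} - z_j)$ leaves the iterated chiral product unchanged, by the unit axiom of the chiral algebra $\mathcal{A}$. The sum $\tilde{v} := \tilde{v}_0 + \tilde{v}_1^{\mathrm{pad}}$ then satisfies $\vec{\mu}_{\mathcal{A}}(\tilde{v}) = v$.

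The main obstacle is globalization: the PBW identification $\mathcal{A} \simeq \mathrm{Sym}\ \mathcal{L}$ holds only locally and the coordinate change on $\mathcal{A}$ produces nontrivial lower-order corrections (see Remark~\ref{PBW}). The argument sidesteps this by working at the symbol level, where $\mathrm{gr}^N \mathcal{A} \simeq \mathrm{Sym}^N \mathcal{L}$ is a canonical global isomorphism of $\mathcal{D}_X$-modules, and by using smooth partitions of unity, which are available because we work with Dolbeault-valued sections. Verifying the unit-padding compatibility is routine but does rely on the unital structure of $\mathcal{A}$ (namely, that chiral product with the unit and a simple pole recovers the original section via the residue).
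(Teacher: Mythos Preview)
Your overall strategy is sound, but there is a concrete slip in the displayed pole structure. With the left-nested iterated product $\vec{\mu}_{\mathcal{A}}$ as defined in the lemma, the factor $\prod_{1\le i<N}(z_i-z_N)$ gives \emph{no} pole along the first diagonal $z_1=z_2$; hence the innermost $\mu_{\mathcal{A}}(l_1\boxtimes l_2)$ is the Lie* bracket, which for linear fields lands in $\omega_X\subset\mathscr{U}(\mathcal{L})^\flat_0$ rather than in degree $2$. Iterating, your expression has PBW-symbol zero for $N\ge 3$, not $l_1\cdots l_N$. The fix is immediate: use consecutive poles $\prod_{i=1}^{N-1}(z_i-z_{i+1})^{-1}$ (or any tree matching the left-nesting), so that each successive $\mu_{\mathcal{A}}$ sees a simple pole and produces the normal-ordered product plus lower-order terms. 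With this correction your PBW induction, partition-of-unity gluing at the symbol level, and unit-padding all go through.

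By contrast, the paper's proof is considerably shorter: it takes a partition of unity $\{\rho_i\}$ subordinate to a cover $\{U_i\}$, finds on each product chart $U_{i_1}\times\cdots\times U_{i_n}$ a local lift $\tilde v_{i_1\cdots i_n}$ with $\vec\mu_{\mathcal{A}}(\tilde v_{i_1\cdots i_n})=v|_{U_{i_1}\cap\cdots\cap U_{i_n}}$ (granted by the local PBW description of the chiral envelope), and sets $\tilde v=\sum\tilde v_{i_1\cdots i_n}\cdot\rho_{i_1}\boxtimes\cdots\boxtimes\rho_{i_n}$. This avoids both the filtration induction and the unit-padding, at the cost of leaving the local surjectivity implicit. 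Your route has the virtue of making transparent \emph{why} local surjectivity holds (top symbol first, then descend), which is arguably more informative; but once the local statement is accepted, the paper's one-step gluing is the cleaner argument.
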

\begin{proof}
We use the partition of the unity. Let $\{\rho_i\}_{i\in I}$ be a partition of the unity which subordinate to an open cover $\{U_i\}_{i\in I}$ of $X$.

We can find $\tilde{v}_{i_1\cdots i_n}\in \Omega^{0,\bullet}(U_{i_1}\times \cdots U_{i_n},\mathcal{L}^{\flat\boxtimes n}(*\Delta))$ such that
$$
\Vec{\mu}(\tilde{v}_{i_1\cdots i_n})=v|_{U_{i_1}\cap\cdots\cap U_{i_n}}.
$$
Then we can take
$$
v=\sum \tilde{v}_{i_1\cdots i_n} \cdot \rho_{i_1}\boxtimes \cdots\boxtimes \rho_{i_n}$$
which satisfies $\Vec{\mu}(\tilde{v})=v$.
\end{proof}

By the above lemma, we define
\begin{equation}\label{NormalOrdering}
\mathcal{W}^{\mathbf{v}}(v;\tilde{v})=\Vec{\mu}_{\mathrm{Sym}}(e^{\partial_P}\tilde{v})\in \Delta_* \mathrm{Sym}\ \mathcal{L},\quad \tilde{v}\in \Omega^{0,\bullet}(X^n,\mathcal{L}^{\flat\boxtimes n}(*\Delta))
\end{equation}

where $\Vec{\mu}_{\mathcal{A}}(\tilde{v})=v$ and $\Vec{\mu}_{\mathrm{Sym}}$ is defined in the same way as in (\ref{IteratedChiral}). Note we view $e^{\partial_P}\tilde{v}\subset \Omega^{0,\bullet}(X^n,\ \mathcal{L}^{\flat\ \boxtimes n}(*\Delta) )$ as an element in $\Omega^{0,\bullet}(X^n,(\mathrm{Sym}\ \mathcal{L})^{\boxtimes n}(*\Delta) )$ and then apply the iterated chiral product $\vec{\mu}_{\mathrm{Sym}}$. See Fig. \ref{NormalOrder}.

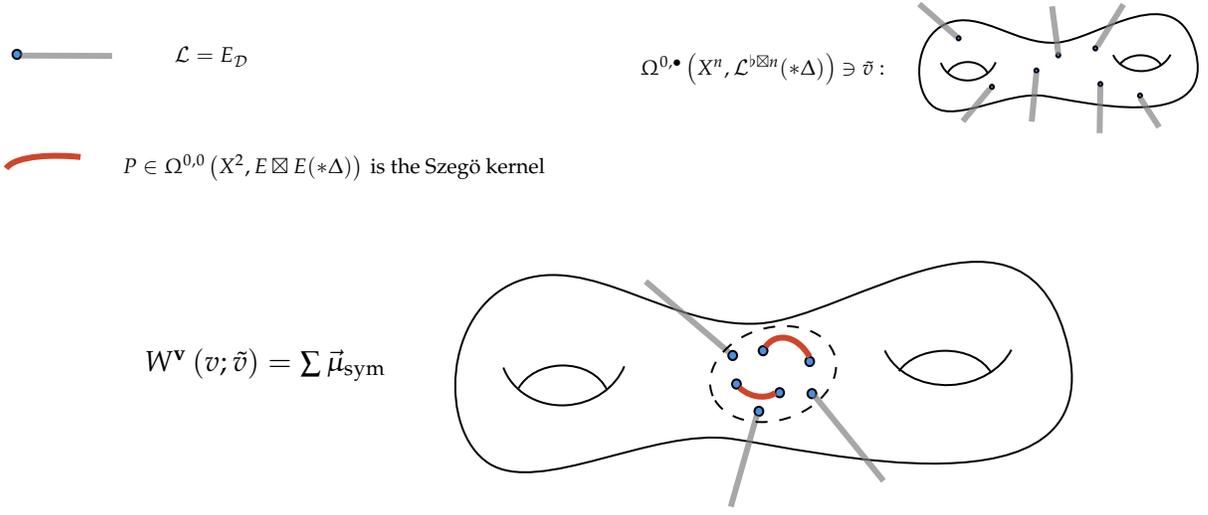
\begin{figure}
    \centering

\tikzset{every picture/.style={line width=0.75pt}} 

\begin{tikzpicture}[x=0.75pt,y=0.75pt,yscale=-1,xscale=1]

\draw  [fill={rgb, 255:red, 74; green, 144; blue, 226 }  ,fill opacity=1 ] (400.14,190.1) .. controls (400.14,188.82) and (401.13,187.79) .. (402.35,187.79) .. controls (403.58,187.79) and (404.57,188.82) .. (404.57,190.1) .. controls (404.57,191.38) and (403.58,192.42) .. (402.35,192.42) .. controls (401.13,192.42) and (400.14,191.38) .. (400.14,190.1) -- cycle ;
\draw  [fill={rgb, 255:red, 74; green, 144; blue, 226 }  ,fill opacity=1 ] (423.64,195.54) .. controls (423.64,194.26) and (424.63,193.22) .. (425.85,193.22) .. controls (427.07,193.22) and (428.06,194.26) .. (428.06,195.54) .. controls (428.06,196.82) and (427.07,197.86) .. (425.85,197.86) .. controls (424.63,197.86) and (423.64,196.82) .. (423.64,195.54) -- cycle ;
\draw   (277.97,155.33) .. controls (315.32,140.36) and (354.9,179.14) .. (400.3,176.37) .. controls (445.7,173.6) and (536.51,100.2) .. (556.3,188.83) .. controls (576.09,277.47) and (425.91,240.08) .. (386.33,234.54) .. controls (346.75,229) and (289.61,263.36) .. (264.1,247) .. controls (238.59,230.65) and (240.62,170.3) .. (277.97,155.33) -- cycle ;
\draw  [draw opacity=0] (527.57,189.61) .. controls (522.54,199.46) and (510.82,206.72) .. (496.94,207.4) .. controls (481.77,208.14) and (468.62,200.8) .. (463.76,189.97) -- (495.99,179.78) -- cycle ; \draw   (527.57,189.61) .. controls (522.54,199.46) and (510.82,206.72) .. (496.94,207.4) .. controls (481.77,208.14) and (468.62,200.8) .. (463.76,189.97) ;
\draw  [draw opacity=0] (470.91,200.44) .. controls (475.04,194.54) and (483.52,190.37) .. (493.38,190.14) .. controls (504.3,189.89) and (513.68,194.55) .. (517.4,201.32) -- (493.67,208.16) -- cycle ; \draw   (470.91,200.44) .. controls (475.04,194.54) and (483.52,190.37) .. (493.38,190.14) .. controls (504.3,189.89) and (513.68,194.55) .. (517.4,201.32) ;
\draw  [draw opacity=0] (332.35,197.98) .. controls (327.87,208.81) and (316.52,216.89) .. (302.99,217.63) .. controls (288.34,218.43) and (275.69,210.36) .. (271.37,198.59) -- (302.09,188.71) -- cycle ; \draw   (332.35,197.98) .. controls (327.87,208.81) and (316.52,216.89) .. (302.99,217.63) .. controls (288.34,218.43) and (275.69,210.36) .. (271.37,198.59) ;
\draw  [draw opacity=0] (279.01,209.06) .. controls (282.72,202.5) and (290.98,197.81) .. (300.63,197.56) .. controls (311.21,197.3) and (320.27,202.45) .. (323.57,209.85) -- (300.9,216.44) -- cycle ; \draw   (279.01,209.06) .. controls (282.72,202.5) and (290.98,197.81) .. (300.63,197.56) .. controls (311.21,197.3) and (320.27,202.45) .. (323.57,209.85) ;
\draw  [fill={rgb, 255:red, 74; green, 144; blue, 226 }  ,fill opacity=1 ] (398.02,220.69) .. controls (398.02,219.41) and (399.01,218.37) .. (400.24,218.37) .. controls (401.46,218.37) and (402.45,219.41) .. (402.45,220.69) .. controls (402.45,221.97) and (401.46,223.01) .. (400.24,223.01) .. controls (399.01,223.01) and (398.02,221.97) .. (398.02,220.69) -- cycle ;
\draw  [fill={rgb, 255:red, 74; green, 144; blue, 226 }  ,fill opacity=1 ] (384.81,192.51) .. controls (384.81,191.23) and (385.8,190.2) .. (387.02,190.2) .. controls (388.24,190.2) and (389.24,191.23) .. (389.24,192.51) .. controls (389.24,193.79) and (388.24,194.83) .. (387.02,194.83) .. controls (385.8,194.83) and (384.81,193.79) .. (384.81,192.51) -- cycle ;
\draw  [fill={rgb, 255:red, 74; green, 144; blue, 226 }  ,fill opacity=1 ] (408.48,211.29) .. controls (408.48,210.01) and (409.47,208.97) .. (410.69,208.97) .. controls (411.91,208.97) and (412.9,210.01) .. (412.9,211.29) .. controls (412.9,212.57) and (411.91,213.61) .. (410.69,213.61) .. controls (409.47,213.61) and (408.48,212.57) .. (408.48,211.29) -- cycle ;
\draw  [fill={rgb, 255:red, 74; green, 144; blue, 226 }  ,fill opacity=1 ] (424.76,211.76) .. controls (424.76,210.48) and (425.75,209.44) .. (426.97,209.44) .. controls (428.2,209.44) and (429.19,210.48) .. (429.19,211.76) .. controls (429.19,213.04) and (428.2,214.07) .. (426.97,214.07) .. controls (425.75,214.07) and (424.76,213.04) .. (424.76,211.76) -- cycle ;
\draw  [fill={rgb, 255:red, 74; green, 144; blue, 226 }  ,fill opacity=1 ] (386.71,206.91) .. controls (386.71,205.63) and (387.7,204.59) .. (388.93,204.59) .. controls (390.15,204.59) and (391.14,205.63) .. (391.14,206.91) .. controls (391.14,208.19) and (390.15,209.23) .. (388.93,209.23) .. controls (387.7,209.23) and (386.71,208.19) .. (386.71,206.91) -- cycle ;
\draw [color={rgb, 255:red, 128; green, 128; blue, 128 }  ,draw opacity=0.69 ][line width=2.25]    (343.11,155.12) -- (385.02,190.83) ;
\draw [color={rgb, 255:red, 206; green, 69; blue, 44 }  ,draw opacity=1 ][fill={rgb, 255:red, 255; green, 255; blue, 255 }  ,fill opacity=1 ][line width=2.25]    (403.57,188.1) .. controls (408.57,181.1) and (419.08,181.57) .. (425.85,193.22) ;
\draw [color={rgb, 255:red, 206; green, 69; blue, 44 }  ,draw opacity=1 ][fill={rgb, 255:red, 255; green, 255; blue, 255 }  ,fill opacity=1 ][line width=2.25]    (390.37,208.67) .. controls (396.87,214.88) and (405.87,213.88) .. (408.48,211.29) ;
\draw [color={rgb, 255:red, 128; green, 128; blue, 128 }  ,draw opacity=0.69 ][line width=2.25]    (428.19,212.76) -- (463.08,255.81) ;
\draw [color={rgb, 255:red, 128; green, 128; blue, 128 }  ,draw opacity=0.69 ][line width=2.25]    (399.19,222.76) -- (386.08,268.81) ;
\draw  [dash pattern={on 4.5pt off 4.5pt}] (377.02,213) .. controls (371.88,201.1) and (381.31,186.48) .. (398.09,180.35) .. controls (414.87,174.22) and (432.64,178.89) .. (437.78,190.79) .. controls (442.92,202.68) and (433.49,217.3) .. (416.71,223.43) .. controls (399.94,229.56) and (382.16,224.89) .. (377.02,213) -- cycle ;
\draw  [fill={rgb, 255:red, 74; green, 144; blue, 226 }  ,fill opacity=1 ] (550.22,40.96) .. controls (550.22,40.37) and (550.67,39.89) .. (551.22,39.89) .. controls (551.77,39.89) and (552.22,40.37) .. (552.22,40.96) .. controls (552.22,41.55) and (551.77,42.03) .. (551.22,42.03) .. controls (550.67,42.03) and (550.22,41.55) .. (550.22,40.96) -- cycle ;
\draw  [fill={rgb, 255:red, 74; green, 144; blue, 226 }  ,fill opacity=1 ] (568.97,37.47) .. controls (568.97,36.88) and (569.41,36.4) .. (569.97,36.4) .. controls (570.52,36.4) and (570.96,36.88) .. (570.96,37.47) .. controls (570.96,38.06) and (570.52,38.54) .. (569.97,38.54) .. controls (569.41,38.54) and (568.97,38.06) .. (568.97,37.47) -- cycle ;
\draw   (495.04,24.9) .. controls (511.91,17.99) and (529.79,35.9) .. (550.3,34.62) .. controls (570.8,33.34) and (611.82,-0.55) .. (620.76,40.37) .. controls (629.7,81.3) and (561.86,64.03) .. (543.99,61.48) .. controls (526.11,58.92) and (500.3,74.78) .. (488.77,67.23) .. controls (477.25,59.68) and (478.17,31.82) .. (495.04,24.9) -- cycle ;
\draw  [draw opacity=0] (607.83,40.64) .. controls (605.59,45.24) and (600.26,48.63) .. (593.95,48.95) .. controls (587.06,49.29) and (581.1,45.86) .. (578.93,40.82) -- (593.52,36.19) -- cycle ; \draw   (607.83,40.64) .. controls (605.59,45.24) and (600.26,48.63) .. (593.95,48.95) .. controls (587.06,49.29) and (581.1,45.86) .. (578.93,40.82) ;
\draw  [draw opacity=0] (582.15,45.8) .. controls (583.99,43.04) and (587.85,41.08) .. (592.34,40.98) .. controls (597.3,40.86) and (601.56,43.04) .. (603.22,46.19) -- (592.47,49.3) -- cycle ; \draw   (582.15,45.8) .. controls (583.99,43.04) and (587.85,41.08) .. (592.34,40.98) .. controls (597.3,40.86) and (601.56,43.04) .. (603.22,46.19) ;
\draw  [draw opacity=0] (519.64,44.5) .. controls (517.64,49.55) and (512.49,53.33) .. (506.34,53.67) .. controls (499.69,54.04) and (493.95,50.28) .. (492.03,44.81) -- (505.93,40.32) -- cycle ; \draw   (519.64,44.5) .. controls (517.64,49.55) and (512.49,53.33) .. (506.34,53.67) .. controls (499.69,54.04) and (493.95,50.28) .. (492.03,44.81) ;
\draw  [draw opacity=0] (495.47,49.78) .. controls (497.13,46.72) and (500.88,44.52) .. (505.27,44.4) .. controls (510.08,44.28) and (514.18,46.69) .. (515.66,50.13) -- (505.4,53.12) -- cycle ; \draw   (495.47,49.78) .. controls (497.13,46.72) and (500.88,44.52) .. (505.27,44.4) .. controls (510.08,44.28) and (514.18,46.69) .. (515.66,50.13) ;
\draw  [fill={rgb, 255:red, 74; green, 144; blue, 226 }  ,fill opacity=1 ] (516.74,56.93) .. controls (516.74,56.34) and (517.19,55.86) .. (517.74,55.86) .. controls (518.29,55.86) and (518.74,56.34) .. (518.74,56.93) .. controls (518.74,57.52) and (518.29,58) .. (517.74,58) .. controls (517.19,58) and (516.74,57.52) .. (516.74,56.93) -- cycle ;
\draw  [fill={rgb, 255:red, 74; green, 144; blue, 226 }  ,fill opacity=1 ] (499.93,32.38) .. controls (499.93,31.79) and (500.38,31.31) .. (500.93,31.31) .. controls (501.48,31.31) and (501.93,31.79) .. (501.93,32.38) .. controls (501.93,32.97) and (501.48,33.45) .. (500.93,33.45) .. controls (500.38,33.45) and (499.93,32.97) .. (499.93,32.38) -- cycle ;
\draw  [fill={rgb, 255:red, 74; green, 144; blue, 226 }  ,fill opacity=1 ] (591.48,61.36) .. controls (591.48,60.77) and (591.93,60.29) .. (592.48,60.29) .. controls (593.03,60.29) and (593.48,60.77) .. (593.48,61.36) .. controls (593.48,61.95) and (593.03,62.43) .. (592.48,62.43) .. controls (591.93,62.43) and (591.48,61.95) .. (591.48,61.36) -- cycle ;
\draw  [fill={rgb, 255:red, 74; green, 144; blue, 226 }  ,fill opacity=1 ] (539.19,48.72) .. controls (539.19,48.13) and (539.64,47.65) .. (540.19,47.65) .. controls (540.74,47.65) and (541.19,48.13) .. (541.19,48.72) .. controls (541.19,49.31) and (540.74,49.79) .. (540.19,49.79) .. controls (539.64,49.79) and (539.19,49.31) .. (539.19,48.72) -- cycle ;
\draw  [fill={rgb, 255:red, 74; green, 144; blue, 226 }  ,fill opacity=1 ] (571.46,55.54) .. controls (571.46,54.95) and (571.91,54.47) .. (572.46,54.47) .. controls (573.01,54.47) and (573.46,54.95) .. (573.46,55.54) .. controls (573.46,56.13) and (573.01,56.61) .. (572.46,56.61) .. controls (571.91,56.61) and (571.46,56.13) .. (571.46,55.54) -- cycle ;
\draw [color={rgb, 255:red, 128; green, 128; blue, 128 }  ,draw opacity=0.69 ][line width=2.25]    (481.09,15.11) -- (500.03,31.6) ;
\draw [color={rgb, 255:red, 128; green, 128; blue, 128 }  ,draw opacity=0.68 ][line width=2.25]    (516.74,57.93) -- (503.08,74.29) ;
\draw [color={rgb, 255:red, 128; green, 128; blue, 128 }  ,draw opacity=0.68 ][line width=2.25]    (592.48,62.43) -- (602.08,77.29) ;
\draw [color={rgb, 255:red, 128; green, 128; blue, 128 }  ,draw opacity=0.68 ][line width=2.25]    (572.46,56.61) -- (572.08,80.29) ;
\draw [color={rgb, 255:red, 128; green, 128; blue, 128 }  ,draw opacity=0.68 ][line width=2.25]    (540.19,49.79) -- (537.81,74.47) ;
\draw [color={rgb, 255:red, 128; green, 128; blue, 128 }  ,draw opacity=0.68 ][line width=2.25]    (548.08,16.29) -- (551.3,40.62) ;
\draw [color={rgb, 255:red, 128; green, 128; blue, 128 }  ,draw opacity=0.68 ][line width=2.25]    (570.96,36.47) -- (584.08,15.29) ;
\draw  [fill={rgb, 255:red, 74; green, 144; blue, 226 }  ,fill opacity=1 ] (23.81,40.51) .. controls (23.81,39.23) and (24.8,38.2) .. (26.02,38.2) .. controls (27.24,38.2) and (28.24,39.23) .. (28.24,40.51) .. controls (28.24,41.79) and (27.24,42.83) .. (26.02,42.83) .. controls (24.8,42.83) and (23.81,41.79) .. (23.81,40.51) -- cycle ;
\draw [color={rgb, 255:red, 128; green, 128; blue, 128 }  ,draw opacity=0.68 ][line width=2.25]    (28.11,41.12) -- (74.08,41.29) ;
\draw [color={rgb, 255:red, 206; green, 69; blue, 44 }  ,draw opacity=1 ][fill={rgb, 255:red, 255; green, 255; blue, 255 }  ,fill opacity=1 ][line width=2.25]    (20.57,98.1) .. controls (25.57,91.1) and (46.57,91.1) .. (58.12,92.35) ;

\draw (89,187.4) node [anchor=north west][inner sep=0.75pt]    {$W^{\mathbf{v}}\left( v;\tilde{v}\right) =\sum \vec{\mu }_{\mathrm{sym}}$};
\draw (339,38.4) node [anchor=north west][inner sep=0.75pt]  [font=\scriptsize]  {$\Omega ^{0,\bullet }\left( X^{n} ,\mathcal{L}^{\flat\boxtimes n}( *\Delta )\right) \ni \tilde{v} :$};
\draw (104,35.4) node [anchor=north west][inner sep=0.75pt]  [font=\scriptsize]  {$\mathcal{L} =E_{\mathcal{D}}$};
\draw (78,89.4) node [anchor=north west][inner sep=0.75pt]  [font=\scriptsize]  {$P\in \Omega ^{0,0}\left( X^{2} ,E\boxtimes E( *\Delta )\right)\text{ is the Szeg\"{o} kernel}$};

\end{tikzpicture}
    \caption{$\mathcal{W}^{\mathbf{v}}(v;\tilde{v})$}
    \label{NormalOrder}
\end{figure}

In the rest of this section, we prove that $\mathcal{W}^{\mathbf{v}}(v;\tilde{v})\in  \mathrm{Sym}\ \mathcal{L}\subset \Delta_*\mathrm{Sym}\ \mathcal{L}$ and it is independent of the choice of $\tilde{v}$. The proof uses the Wick theorem in the form of chiral algebras and results in \cite{gui2021elliptic}. We first introduce some notations.

Given a possibly singular symmetric bisection $K\in \Omega^{0,\bullet}(X^2,E\boxtimes E(*\Delta))^{\sigma_2}$, we define an operator $e^{K_{\mathbf{sing}}}$ acting on $\Omega^{0,\bullet}(X^n,(\mathrm{Sym}\ \mathcal{L})^{\boxtimes n}(*\Delta) )$. We will construct this operator on the sections of left $\mathcal{D}_{X^n}$-modules $\Omega^{0,\bullet}(X^n,(\mathrm{Sym}\ \mathcal{L}^l)^{\boxtimes n}(*\Delta) )$ and then transfer the construction back to the world of right $\mathcal{D}_{X^n}$-modules.

We describe this operator on the homogeneous components, define
$$
e^{K_{\mathbf{Sing}}}:\Omega^{0,\bullet}(X^n,\mathrm{Sym}^{k_1}\mathcal{L}^{l}\boxtimes \cdots \boxtimes \mathrm{Sym}^{k_n}\mathcal{L}^{l} (*\Delta))\rightarrow \Omega^{0,\bullet}(X^n,\mathrm{Sym}^{\leq k_1}\mathcal{L}^l\boxtimes \cdots \boxtimes \mathrm{Sym}^{\leq k_n}\mathcal{L}^l ((*\Delta)))
$$
as follows. We first define the operator $(\partial_K)_{k_1,\dots,k_n}$ acting on
$$
(\mathcal{L}^l\oplus \mathcal{O}_X)^{\boxtimes k_1}\boxtimes \cdots\boxtimes (\mathcal{L}^l\oplus \mathcal{O}_X)^{\boxtimes k_n}.
$$
Formally write $K=e_1\boxtimes e_2$, then define
$$
(\partial_K)_{k_1,\dots,k_n}=\sum_{1\leq  i<j\leq n} 1^{\boxtimes k_1}\boxtimes \cdots\boxtimes \Delta_{i*}\partial_{e_1} \boxtimes\cdots \boxtimes \Delta_{j*}\partial_{e_2}\boxtimes \cdots\boxtimes 1^{\boxtimes k_n},
$$
where $\Delta_{i*}\partial_e$ acts on $(\mathcal{L}^l\oplus \mathcal{O}_X)^{\boxtimes k_i}$
$$
\Delta_{i*}\partial_e=\sum_{r=1}^{k_i} 1\boxtimes \cdots \boxtimes\underbrace{\partial_e}_{\text{s-th}}\boxtimes \cdots \boxtimes 1
$$
Given $v\in \Omega^{0,\bullet}(X^n,\mathrm{Sym}^{k_1}\mathcal{L}^{l}\boxtimes \cdots \boxtimes \mathrm{Sym}^{k_n}\mathcal{L}^{l} (*\Delta)) $. We can find
$$
v'\in \Omega^{0,\bullet}\left(X^I,(\mathcal{L}^l)^{\boxtimes k_1}\boxtimes \cdots\boxtimes (\mathcal{L}^l)^{\boxtimes k_n}(*\Delta_{\mathbf{trans}})\right).
$$
 such that
 \begin{equation}\label{Pullback}
(\Delta^*_1\boxtimes\cdots\Delta^*_n)(v')=v.
 \end{equation}

Here $I$ is an index set such that $|I|=k_1+\dots+k_n$ and let $\pi:I\twoheadrightarrow \{1,\dots,n\}$ be a surjective map. Then the diagonal embedding $\Delta^{(\pi)}:X^n\hookrightarrow X^I$ can be written as $\Delta^{(\pi)}=\Delta_1\times\cdots\times \Delta_n$ where $\Delta_i:X\rightarrow X^{k_i}=X^{\pi^{-1}(i)}$.  The transversal diagonal $\Delta_{\mathbf{trans}}$ is defined as follows
$$
\Delta_{\mathbf{trans}}=\{(\dots,x_i,\dots)\in X^I|x_i=x_j \ \text{if} \ \pi(i)\neq \pi(j)\}.
$$

In (\ref{Pullback}), we use the fact that $\Delta_i^*(\mathcal{L}^{l})^{\boxtimes k_i}$ is isomorphic to $\mathrm{Sym}^{k_i}(\mathcal{L}^l)$ after symmetrization (we will omit the symmetrization notation).  We define
$$
\boxed{e^{K_{\mathbf{Sing}}}(v):=(\Delta^*_1\boxtimes\cdots\Delta^*_n)(e^{(\partial_k)_{k_1,\dots,k_n}}v').}
$$

The above definition does not depend on the choice $v'$. Suppose that
$$
(\Delta^*_1\boxtimes\cdots\Delta^*_n)(v'-v'')=0.
$$
The one can find $i\in \{1,\dots,n\}$ and $i_1,i_2\in \pi^{-1}(i)$  such that locally we have
$$
v'-v''=w\cdot (z_{i_1}-z_{i_2}),\quad w \ \text{is a local section of } \Omega^{0,\bullet}\otimes \left((\mathcal{L}^l)^{\boxtimes k_1}\boxtimes \cdots\boxtimes (\mathcal{L}^l)^{\boxtimes k_n}(*\Delta_{\mathbf{trans}})\right).
$$
Here $z_{i_s},s=1,\dots,k_i=|\pi^{-1}(i)|$ are local coordinates of $X^{\pi^{-1}(i)}=X^{k_i}$. The reason that we write the multiplication of $(z_{i_1}-z_{i_2})$ on the right is that the $\mathcal{O}_X$-module structure of $\mathcal{L}^l=E\otimes \mathcal{D}_X\otimes \omega_X^{-1}$ is the one on the right (coming from $\mathcal{O}_X\subset \mathcal{D}_X^{\mathrm{op}}\simeq \omega_X\otimes \mathcal{D}_X\otimes\omega^{-1}_X$). Then by the fact that $\partial_K$ is compatible with $\mathcal{D}$-module structure, we have
$$
e^{(\partial_K)_{k_1,\dots,k_n}}(v'-v'')=(e^{(\partial_K)_{k_1,\dots,k_n}}w)\cdot (z_{i_1}-z_{i_2}),
$$
which implies that

$$
(\Delta^*_1\boxtimes\cdots\Delta^*_n)(e^{(\partial_K)_{k_1,\dots,k_n}}v')=(\Delta^*_1\boxtimes\cdots\Delta^*_n)(e^{(\partial_K)_{k_1,\dots,k_n}}v'').
$$

Finally we see that $e^{K_{\mathbf{sing}}}$ is well defined and compatible with $\mathcal{D}_{X^n}$-module structure.

Now for a regular symmetric bisection $Q\in \Omega^{0,\bullet}(X^2,E\boxtimes E)^{\sigma_2}$. we define another operation $e^{Q_{\mathbf{reg}}}$ acting on $\Omega^{0,\bullet}(X^n,(\mathrm{Sym}\ \mathcal{L})^{\boxtimes n}(*\Delta) )$.

As before, we describe its action on the components
$$
e^{Q_{\mathbf{reg}}}:\Omega^{0,\bullet}(X^n,\mathrm{Sym}^{k_1}\mathcal{L}^{l}\boxtimes \cdots \boxtimes \mathrm{Sym}^{k_n}\mathcal{L}^{l} (*\Delta))\rightarrow \Omega^{0,\bullet}(X^n,\mathrm{Sym}^{\leq k_1}\mathcal{L}^l\boxtimes \cdots \boxtimes \mathrm{Sym}^{\leq k_n}\mathcal{L}^l ((*\Delta)))
$$

We first define the operator $(\partial_Q)_{k_1+\cdots+k_n}$ acting on
$$
(\mathcal{L}^l\oplus \mathcal{O}_X)^{\boxtimes k_1}\boxtimes \cdots\boxtimes (\mathcal{L}^l\oplus \mathcal{O}_X)^{\boxtimes k_n}=(\mathcal{L}^l\oplus \mathcal{O}_X)^{\boxtimes k_1+\cdots+k_n}
$$
by

$$
(\partial_Q)_{k_1+\cdots+k_n}=\sum_{1\leq i<j\leq k_1+\cdots +k_n} 1\boxtimes \cdots\boxtimes \underbrace{\partial_{e_1}}_{\text{i-th}} \boxtimes\cdots \boxtimes \underbrace{\partial_{e_2}}_{\text{j-th}}\boxtimes \cdots\boxtimes 1,
$$
where as before we formally write $Q=e_1\boxtimes e_2$. Define
$$
\boxed{e^{Q_{\mathbf{reg}}}(v):=(\Delta^*_1\boxtimes\cdots\boxtimes\Delta^*_n)(e^{(\partial_Q)_{k_1+\cdots+k_n}}v'),}
$$
here we use the same notations $v$ and $v'$ as in (\ref{Pullback}).

Then by definition
\begin{equation}\label{Qpullback}
e^{Q_{\mathbf{reg}}}\left(\Delta^*(v_1\boxtimes
 v_2)\right)=\Delta^*\left(e^{Q_{\mathbf{reg}}} (v_1\boxtimes
 v_2)\right).
\end{equation}

Here we summarize the above construction.

\begin{itemize}
  \item The operator $e^{K_{\mathbf{sing}}}$ is constructed from a singular kernel function $K\in \Omega^{0,\bullet}(X^2,E\boxtimes E(*\Delta))^{\sigma_2}$. It does not have self-loops (see Fig. \ref{Singular}).
  \item The operator $e^{Q_{\mathbf{reg}}}$ is constructed from a regular kernel function $Q\in \Omega^{0,\bullet}(X^2,E\boxtimes E)^{\sigma_2}$. It contains self-loops (see Fig. \ref{Regular}).
\end{itemize}
\begin{figure}
    \centering

\tikzset{every picture/.style={line width=0.75pt}} 

\begin{tikzpicture}[x=0.75pt,y=0.75pt,yscale=-1,xscale=1]

\draw  [fill={rgb, 255:red, 74; green, 144; blue, 226 }  ,fill opacity=1 ] (449.64,192.54) .. controls (449.64,191.26) and (450.63,190.22) .. (451.85,190.22) .. controls (453.07,190.22) and (454.06,191.26) .. (454.06,192.54) .. controls (454.06,193.82) and (453.07,194.86) .. (451.85,194.86) .. controls (450.63,194.86) and (449.64,193.82) .. (449.64,192.54) -- cycle ;
\draw   (292.97,169.33) .. controls (330.32,154.36) and (369.9,193.14) .. (415.3,190.37) .. controls (460.7,187.6) and (551.51,114.2) .. (571.3,202.83) .. controls (591.09,291.47) and (440.91,254.08) .. (401.33,248.54) .. controls (361.75,243) and (304.61,277.36) .. (279.1,261) .. controls (253.59,244.65) and (255.62,184.3) .. (292.97,169.33) -- cycle ;
\draw  [draw opacity=0] (542.57,203.61) .. controls (537.54,213.46) and (525.82,220.72) .. (511.94,221.4) .. controls (496.77,222.14) and (483.62,214.8) .. (478.76,203.97) -- (510.99,193.78) -- cycle ; \draw   (542.57,203.61) .. controls (537.54,213.46) and (525.82,220.72) .. (511.94,221.4) .. controls (496.77,222.14) and (483.62,214.8) .. (478.76,203.97) ;
\draw  [draw opacity=0] (485.91,214.44) .. controls (490.04,208.54) and (498.52,204.37) .. (508.38,204.14) .. controls (519.3,203.89) and (528.68,208.55) .. (532.4,215.32) -- (508.67,222.16) -- cycle ; \draw   (485.91,214.44) .. controls (490.04,208.54) and (498.52,204.37) .. (508.38,204.14) .. controls (519.3,203.89) and (528.68,208.55) .. (532.4,215.32) ;
\draw  [draw opacity=0] (347.35,211.98) .. controls (342.87,222.81) and (331.52,230.89) .. (317.99,231.63) .. controls (303.34,232.43) and (290.69,224.36) .. (286.37,212.59) -- (317.09,202.71) -- cycle ; \draw   (347.35,211.98) .. controls (342.87,222.81) and (331.52,230.89) .. (317.99,231.63) .. controls (303.34,232.43) and (290.69,224.36) .. (286.37,212.59) ;
\draw  [draw opacity=0] (294.01,223.06) .. controls (297.72,216.5) and (305.98,211.81) .. (315.63,211.56) .. controls (326.21,211.3) and (335.27,216.45) .. (338.57,223.85) -- (315.9,230.44) -- cycle ; \draw   (294.01,223.06) .. controls (297.72,216.5) and (305.98,211.81) .. (315.63,211.56) .. controls (326.21,211.3) and (335.27,216.45) .. (338.57,223.85) ;
\draw  [fill={rgb, 255:red, 74; green, 144; blue, 226 }  ,fill opacity=1 ] (335.02,241.69) .. controls (335.02,240.41) and (336.01,239.37) .. (337.24,239.37) .. controls (338.46,239.37) and (339.45,240.41) .. (339.45,241.69) .. controls (339.45,242.97) and (338.46,244.01) .. (337.24,244.01) .. controls (336.01,244.01) and (335.02,242.97) .. (335.02,241.69) -- cycle ;
\draw  [fill={rgb, 255:red, 74; green, 144; blue, 226 }  ,fill opacity=1 ] (301.81,183.51) .. controls (301.81,182.23) and (302.8,181.2) .. (304.02,181.2) .. controls (305.24,181.2) and (306.24,182.23) .. (306.24,183.51) .. controls (306.24,184.79) and (305.24,185.83) .. (304.02,185.83) .. controls (302.8,185.83) and (301.81,184.79) .. (301.81,183.51) -- cycle ;
\draw  [fill={rgb, 255:red, 74; green, 144; blue, 226 }  ,fill opacity=1 ] (459.76,244.76) .. controls (459.76,243.48) and (460.75,242.44) .. (461.97,242.44) .. controls (463.2,242.44) and (464.19,243.48) .. (464.19,244.76) .. controls (464.19,246.04) and (463.2,247.07) .. (461.97,247.07) .. controls (460.75,247.07) and (459.76,246.04) .. (459.76,244.76) -- cycle ;
\draw [color={rgb, 255:red, 128; green, 128; blue, 128 }  ,draw opacity=0.68 ][line width=2.25]    (267.08,154.67) -- (302.02,181.83) ;
\draw [color={rgb, 255:red, 128; green, 128; blue, 128 }  ,draw opacity=0.67 ][line width=2.25]    (461.97,247.07) -- (463.08,290.67) ;
\draw [color={rgb, 255:red, 128; green, 128; blue, 128 }  ,draw opacity=0.67 ][line width=2.25]    (336.19,243.76) -- (315.08,278.67) ;
\draw  [fill={rgb, 255:red, 74; green, 144; blue, 226 }  ,fill opacity=1 ] (583.97,51.47) .. controls (583.97,50.88) and (584.41,50.4) .. (584.97,50.4) .. controls (585.52,50.4) and (585.96,50.88) .. (585.96,51.47) .. controls (585.96,52.06) and (585.52,52.54) .. (584.97,52.54) .. controls (584.41,52.54) and (583.97,52.06) .. (583.97,51.47) -- cycle ;
\draw   (510.04,38.9) .. controls (526.91,31.99) and (544.79,49.9) .. (565.3,48.62) .. controls (585.8,47.34) and (626.82,13.45) .. (635.76,54.37) .. controls (644.7,95.3) and (576.86,78.03) .. (558.99,75.48) .. controls (541.11,72.92) and (515.3,88.78) .. (503.77,81.23) .. controls (492.25,73.68) and (493.17,45.82) .. (510.04,38.9) -- cycle ;
\draw  [draw opacity=0] (622.83,54.64) .. controls (620.59,59.24) and (615.26,62.63) .. (608.95,62.95) .. controls (602.06,63.29) and (596.1,59.86) .. (593.93,54.82) -- (608.52,50.19) -- cycle ; \draw   (622.83,54.64) .. controls (620.59,59.24) and (615.26,62.63) .. (608.95,62.95) .. controls (602.06,63.29) and (596.1,59.86) .. (593.93,54.82) ;
\draw  [draw opacity=0] (597.15,59.8) .. controls (598.99,57.04) and (602.85,55.08) .. (607.34,54.98) .. controls (612.3,54.86) and (616.56,57.04) .. (618.22,60.19) -- (607.47,63.3) -- cycle ; \draw   (597.15,59.8) .. controls (598.99,57.04) and (602.85,55.08) .. (607.34,54.98) .. controls (612.3,54.86) and (616.56,57.04) .. (618.22,60.19) ;
\draw  [draw opacity=0] (534.64,58.5) .. controls (532.64,63.55) and (527.49,67.33) .. (521.34,67.67) .. controls (514.69,68.04) and (508.95,64.28) .. (507.03,58.81) -- (520.93,54.32) -- cycle ; \draw   (534.64,58.5) .. controls (532.64,63.55) and (527.49,67.33) .. (521.34,67.67) .. controls (514.69,68.04) and (508.95,64.28) .. (507.03,58.81) ;
\draw  [draw opacity=0] (510.47,63.78) .. controls (512.13,60.72) and (515.88,58.52) .. (520.27,58.4) .. controls (525.08,58.28) and (529.18,60.69) .. (530.66,64.13) -- (520.4,67.12) -- cycle ; \draw   (510.47,63.78) .. controls (512.13,60.72) and (515.88,58.52) .. (520.27,58.4) .. controls (525.08,58.28) and (529.18,60.69) .. (530.66,64.13) ;
\draw  [fill={rgb, 255:red, 74; green, 144; blue, 226 }  ,fill opacity=1 ] (531.74,70.93) .. controls (531.74,70.34) and (532.19,69.86) .. (532.74,69.86) .. controls (533.29,69.86) and (533.74,70.34) .. (533.74,70.93) .. controls (533.74,71.52) and (533.29,72) .. (532.74,72) .. controls (532.19,72) and (531.74,71.52) .. (531.74,70.93) -- cycle ;
\draw  [fill={rgb, 255:red, 74; green, 144; blue, 226 }  ,fill opacity=1 ] (514.93,46.38) .. controls (514.93,45.79) and (515.38,45.31) .. (515.93,45.31) .. controls (516.48,45.31) and (516.93,45.79) .. (516.93,46.38) .. controls (516.93,46.97) and (516.48,47.45) .. (515.93,47.45) .. controls (515.38,47.45) and (514.93,46.97) .. (514.93,46.38) -- cycle ;
\draw  [fill={rgb, 255:red, 74; green, 144; blue, 226 }  ,fill opacity=1 ] (586.46,69.54) .. controls (586.46,68.95) and (586.91,68.47) .. (587.46,68.47) .. controls (588.01,68.47) and (588.46,68.95) .. (588.46,69.54) .. controls (588.46,70.13) and (588.01,70.61) .. (587.46,70.61) .. controls (586.91,70.61) and (586.46,70.13) .. (586.46,69.54) -- cycle ;
\draw [color={rgb, 255:red, 128; green, 128; blue, 128 }  ,draw opacity=0.69 ][line width=2.25]    (504.08,35.41) -- (515.03,45.6) ;
\draw [color={rgb, 255:red, 128; green, 128; blue, 128 }  ,draw opacity=0.68 ][line width=2.25]    (531.74,71.93) -- (518.08,88.29) ;
\draw [color={rgb, 255:red, 128; green, 128; blue, 128 }  ,draw opacity=0.68 ][line width=2.25]    (587.46,70.61) -- (585.08,79.41) ;
\draw [color={rgb, 255:red, 128; green, 128; blue, 128 }  ,draw opacity=0.68 ][line width=2.25]    (584.96,50.47) -- (597.08,40.41) ;
\draw  [fill={rgb, 255:red, 74; green, 144; blue, 226 }  ,fill opacity=1 ] (49.81,57.51) .. controls (49.81,56.23) and (50.8,55.2) .. (52.02,55.2) .. controls (53.24,55.2) and (54.24,56.23) .. (54.24,57.51) .. controls (54.24,58.79) and (53.24,59.83) .. (52.02,59.83) .. controls (50.8,59.83) and (49.81,58.79) .. (49.81,57.51) -- cycle ;
\draw [color={rgb, 255:red, 128; green, 128; blue, 128 }  ,draw opacity=0.68 ][line width=2.25]    (54.11,58.12) -- (90.08,56.29) ;
\draw [color={rgb, 255:red, 128; green, 128; blue, 128 }  ,draw opacity=0.68 ][line width=2.25]    (53.11,56.12) -- (70.08,23.67) ;
\draw [color={rgb, 255:red, 128; green, 128; blue, 128 }  ,draw opacity=0.68 ][line width=2.25]    (82.08,80.67) -- (53.02,58.83) ;
\draw [color={rgb, 255:red, 128; green, 128; blue, 128 }  ,draw opacity=0.68 ][line width=2.25]    (35.04,90.64) -- (52.02,58.2) ;
\draw [color={rgb, 255:red, 128; green, 128; blue, 128 }  ,draw opacity=0.68 ][line width=2.25]    (42.08,20.67) -- (51.02,55.2) ;
\draw [color={rgb, 255:red, 128; green, 128; blue, 128 }  ,draw opacity=0.66 ][line width=2.25]    (482.08,155.67) -- (453.02,191.83) ;
\draw [color={rgb, 255:red, 144; green, 19; blue, 254 }  ,draw opacity=1 ][fill={rgb, 255:red, 255; green, 255; blue, 255 }  ,fill opacity=1 ][line width=2.25]    (40.57,118.1) .. controls (45.57,111.1) and (66.57,111.1) .. (78.12,112.35) ;
\draw [color={rgb, 255:red, 144; green, 19; blue, 254 }  ,draw opacity=1 ][fill={rgb, 255:red, 255; green, 255; blue, 255 }  ,fill opacity=1 ][line width=2.25]    (307.24,184.51) .. controls (312.24,177.51) and (370.08,216.41) .. (449.64,193.54) ;
\draw [color={rgb, 255:red, 144; green, 19; blue, 254 }  ,draw opacity=1 ][fill={rgb, 255:red, 255; green, 255; blue, 255 }  ,fill opacity=1 ][line width=2.25]    (450.64,194.54) .. controls (435.08,214.41) and (438.08,234.41) .. (459.76,243.76) ;
\draw [color={rgb, 255:red, 144; green, 19; blue, 254 }  ,draw opacity=1 ][fill={rgb, 255:red, 255; green, 255; blue, 255 }  ,fill opacity=1 ][line width=2.25]    (454.06,193.54) .. controls (471.51,195.41) and (478.51,232.41) .. (464.19,242.76) ;
\draw [color={rgb, 255:red, 128; green, 128; blue, 128 }  ,draw opacity=0.69 ][line width=2.25]    (517.09,47.11) -- (531.08,49.41) ;
\draw [color={rgb, 255:red, 128; green, 128; blue, 128 }  ,draw opacity=0.69 ][line width=2.25]    (505.75,69.63) -- (531.74,70.93) ;
\draw [color={rgb, 255:red, 128; green, 128; blue, 128 }  ,draw opacity=0.69 ][line width=2.25]    (506.08,75.41) -- (531.74,71.93) ;
\draw [color={rgb, 255:red, 128; green, 128; blue, 128 }  ,draw opacity=0.69 ][line width=2.25]    (572.08,52.41) -- (584.29,51.27) ;
\draw [color={rgb, 255:red, 128; green, 128; blue, 128 }  ,draw opacity=0.69 ][line width=2.25]    (588.87,68.87) -- (603.08,66.41) ;
\draw [color={rgb, 255:red, 128; green, 128; blue, 128 }  ,draw opacity=0.69 ][line width=2.25]    (588.46,71.54) -- (604.08,75.41) ;
\draw [color={rgb, 255:red, 128; green, 128; blue, 128 }  ,draw opacity=0.69 ][line width=2.25]    (576.08,67.41) -- (586.46,69.54) ;
\draw [color={rgb, 255:red, 128; green, 128; blue, 128 }  ,draw opacity=0.69 ][line width=2.25]    (587.46,68.54) -- (591.08,60.41) ;
\draw [color={rgb, 255:red, 128; green, 128; blue, 128 }  ,draw opacity=0.69 ][line width=2.25]    (583.08,61.41) -- (584.97,52.54) ;
\draw [color={rgb, 255:red, 128; green, 128; blue, 128 }  ,draw opacity=0.69 ][line width=2.25]    (585.96,51.47) -- (598.08,50.41) ;
\draw [color={rgb, 255:red, 128; green, 128; blue, 128 }  ,draw opacity=0.67 ][line width=2.25]    (335.19,240.08) -- (292.08,239.12) ;
\draw [color={rgb, 255:red, 128; green, 128; blue, 128 }  ,draw opacity=0.67 ][line width=2.25]    (336.19,242.76) -- (292.08,251.12) ;
\draw [color={rgb, 255:red, 128; green, 128; blue, 128 }  ,draw opacity=0.67 ][line width=2.25]    (464.19,245.76) -- (510.3,253.48) ;
\draw [color={rgb, 255:red, 128; green, 128; blue, 128 }  ,draw opacity=0.67 ][line width=2.25]    (465.19,243.76) -- (515.08,228.12) ;

\draw (155,201.4) node [anchor=north west][inner sep=0.75pt]    {$e^{K_{\mathbf{sing}}} =\sum $};
\draw (334,52.4) node [anchor=north west][inner sep=0.75pt]  [font=\scriptsize]  {$\Omega ^{0,\bullet }\left( X^{n} ,( \mathrm{Sym}\ \mathcal{L})^{\boxtimes n}( *\Delta )\right) \ni v:$};
\draw (119,49.4) node [anchor=north west][inner sep=0.75pt]  [font=\scriptsize]  {$\mathrm{Sym}\ \mathcal{L}$};
\draw (93,103.4) node [anchor=north west][inner sep=0.75pt]  [font=\scriptsize]  {$K\in \Omega ^{0,\bullet }\left( X^{2} ,E\boxtimes E( *\Delta )\right)^{\sigma _{2}}\text{ }$};

\end{tikzpicture}
    \caption{$e^{K_{\mathbf{sing}}}$}
    \label{Singular}
\end{figure}

\begin{figure}
    \centering

\tikzset{every picture/.style={line width=0.75pt}} 

\begin{tikzpicture}[x=0.75pt,y=0.75pt,yscale=-1,xscale=1]

\draw  [fill={rgb, 255:red, 74; green, 144; blue, 226 }  ,fill opacity=1 ] (449.64,192.54) .. controls (449.64,191.26) and (450.63,190.22) .. (451.85,190.22) .. controls (453.07,190.22) and (454.06,191.26) .. (454.06,192.54) .. controls (454.06,193.82) and (453.07,194.86) .. (451.85,194.86) .. controls (450.63,194.86) and (449.64,193.82) .. (449.64,192.54) -- cycle ;
\draw   (292.97,169.33) .. controls (330.32,154.36) and (369.9,193.14) .. (415.3,190.37) .. controls (460.7,187.6) and (551.51,114.2) .. (571.3,202.83) .. controls (591.09,291.47) and (440.91,254.08) .. (401.33,248.54) .. controls (361.75,243) and (304.61,277.36) .. (279.1,261) .. controls (253.59,244.65) and (255.62,184.3) .. (292.97,169.33) -- cycle ;
\draw  [draw opacity=0] (542.57,203.61) .. controls (537.54,213.46) and (525.82,220.72) .. (511.94,221.4) .. controls (496.77,222.14) and (483.62,214.8) .. (478.76,203.97) -- (510.99,193.78) -- cycle ; \draw   (542.57,203.61) .. controls (537.54,213.46) and (525.82,220.72) .. (511.94,221.4) .. controls (496.77,222.14) and (483.62,214.8) .. (478.76,203.97) ;
\draw  [draw opacity=0] (485.91,214.44) .. controls (490.04,208.54) and (498.52,204.37) .. (508.38,204.14) .. controls (519.3,203.89) and (528.68,208.55) .. (532.4,215.32) -- (508.67,222.16) -- cycle ; \draw   (485.91,214.44) .. controls (490.04,208.54) and (498.52,204.37) .. (508.38,204.14) .. controls (519.3,203.89) and (528.68,208.55) .. (532.4,215.32) ;
\draw  [draw opacity=0] (347.35,211.98) .. controls (342.87,222.81) and (331.52,230.89) .. (317.99,231.63) .. controls (303.34,232.43) and (290.69,224.36) .. (286.37,212.59) -- (317.09,202.71) -- cycle ; \draw   (347.35,211.98) .. controls (342.87,222.81) and (331.52,230.89) .. (317.99,231.63) .. controls (303.34,232.43) and (290.69,224.36) .. (286.37,212.59) ;
\draw  [draw opacity=0] (294.01,223.06) .. controls (297.72,216.5) and (305.98,211.81) .. (315.63,211.56) .. controls (326.21,211.3) and (335.27,216.45) .. (338.57,223.85) -- (315.9,230.44) -- cycle ; \draw   (294.01,223.06) .. controls (297.72,216.5) and (305.98,211.81) .. (315.63,211.56) .. controls (326.21,211.3) and (335.27,216.45) .. (338.57,223.85) ;
\draw  [fill={rgb, 255:red, 74; green, 144; blue, 226 }  ,fill opacity=1 ] (335.02,241.69) .. controls (335.02,240.41) and (336.01,239.37) .. (337.24,239.37) .. controls (338.46,239.37) and (339.45,240.41) .. (339.45,241.69) .. controls (339.45,242.97) and (338.46,244.01) .. (337.24,244.01) .. controls (336.01,244.01) and (335.02,242.97) .. (335.02,241.69) -- cycle ;
\draw  [fill={rgb, 255:red, 74; green, 144; blue, 226 }  ,fill opacity=1 ] (301.81,183.51) .. controls (301.81,182.23) and (302.8,181.2) .. (304.02,181.2) .. controls (305.24,181.2) and (306.24,182.23) .. (306.24,183.51) .. controls (306.24,184.79) and (305.24,185.83) .. (304.02,185.83) .. controls (302.8,185.83) and (301.81,184.79) .. (301.81,183.51) -- cycle ;
\draw  [fill={rgb, 255:red, 74; green, 144; blue, 226 }  ,fill opacity=1 ] (459.76,244.76) .. controls (459.76,243.48) and (460.75,242.44) .. (461.97,242.44) .. controls (463.2,242.44) and (464.19,243.48) .. (464.19,244.76) .. controls (464.19,246.04) and (463.2,247.07) .. (461.97,247.07) .. controls (460.75,247.07) and (459.76,246.04) .. (459.76,244.76) -- cycle ;
\draw [color={rgb, 255:red, 128; green, 128; blue, 128 }  ,draw opacity=0.68 ][line width=2.25]    (267.08,154.67) -- (302.02,181.83) ;
\draw [color={rgb, 255:red, 128; green, 128; blue, 128 }  ,draw opacity=0.67 ][line width=2.25]    (461.97,247.07) -- (463.08,290.67) ;
\draw [color={rgb, 255:red, 128; green, 128; blue, 128 }  ,draw opacity=0.67 ][line width=2.25]    (336.19,243.76) -- (315.08,278.67) ;
\draw  [fill={rgb, 255:red, 74; green, 144; blue, 226 }  ,fill opacity=1 ] (583.97,51.47) .. controls (583.97,50.88) and (584.41,50.4) .. (584.97,50.4) .. controls (585.52,50.4) and (585.96,50.88) .. (585.96,51.47) .. controls (585.96,52.06) and (585.52,52.54) .. (584.97,52.54) .. controls (584.41,52.54) and (583.97,52.06) .. (583.97,51.47) -- cycle ;
\draw   (510.04,38.9) .. controls (526.91,31.99) and (544.79,49.9) .. (565.3,48.62) .. controls (585.8,47.34) and (626.82,13.45) .. (635.76,54.37) .. controls (644.7,95.3) and (576.86,78.03) .. (558.99,75.48) .. controls (541.11,72.92) and (515.3,88.78) .. (503.77,81.23) .. controls (492.25,73.68) and (493.17,45.82) .. (510.04,38.9) -- cycle ;
\draw  [draw opacity=0] (622.83,54.64) .. controls (620.59,59.24) and (615.26,62.63) .. (608.95,62.95) .. controls (602.06,63.29) and (596.1,59.86) .. (593.93,54.82) -- (608.52,50.19) -- cycle ; \draw   (622.83,54.64) .. controls (620.59,59.24) and (615.26,62.63) .. (608.95,62.95) .. controls (602.06,63.29) and (596.1,59.86) .. (593.93,54.82) ;
\draw  [draw opacity=0] (597.15,59.8) .. controls (598.99,57.04) and (602.85,55.08) .. (607.34,54.98) .. controls (612.3,54.86) and (616.56,57.04) .. (618.22,60.19) -- (607.47,63.3) -- cycle ; \draw   (597.15,59.8) .. controls (598.99,57.04) and (602.85,55.08) .. (607.34,54.98) .. controls (612.3,54.86) and (616.56,57.04) .. (618.22,60.19) ;
\draw  [draw opacity=0] (534.64,58.5) .. controls (532.64,63.55) and (527.49,67.33) .. (521.34,67.67) .. controls (514.69,68.04) and (508.95,64.28) .. (507.03,58.81) -- (520.93,54.32) -- cycle ; \draw   (534.64,58.5) .. controls (532.64,63.55) and (527.49,67.33) .. (521.34,67.67) .. controls (514.69,68.04) and (508.95,64.28) .. (507.03,58.81) ;
\draw  [draw opacity=0] (510.47,63.78) .. controls (512.13,60.72) and (515.88,58.52) .. (520.27,58.4) .. controls (525.08,58.28) and (529.18,60.69) .. (530.66,64.13) -- (520.4,67.12) -- cycle ; \draw   (510.47,63.78) .. controls (512.13,60.72) and (515.88,58.52) .. (520.27,58.4) .. controls (525.08,58.28) and (529.18,60.69) .. (530.66,64.13) ;
\draw  [fill={rgb, 255:red, 74; green, 144; blue, 226 }  ,fill opacity=1 ] (531.74,70.93) .. controls (531.74,70.34) and (532.19,69.86) .. (532.74,69.86) .. controls (533.29,69.86) and (533.74,70.34) .. (533.74,70.93) .. controls (533.74,71.52) and (533.29,72) .. (532.74,72) .. controls (532.19,72) and (531.74,71.52) .. (531.74,70.93) -- cycle ;
\draw  [fill={rgb, 255:red, 74; green, 144; blue, 226 }  ,fill opacity=1 ] (514.93,46.38) .. controls (514.93,45.79) and (515.38,45.31) .. (515.93,45.31) .. controls (516.48,45.31) and (516.93,45.79) .. (516.93,46.38) .. controls (516.93,46.97) and (516.48,47.45) .. (515.93,47.45) .. controls (515.38,47.45) and (514.93,46.97) .. (514.93,46.38) -- cycle ;
\draw  [fill={rgb, 255:red, 74; green, 144; blue, 226 }  ,fill opacity=1 ] (586.46,69.54) .. controls (586.46,68.95) and (586.91,68.47) .. (587.46,68.47) .. controls (588.01,68.47) and (588.46,68.95) .. (588.46,69.54) .. controls (588.46,70.13) and (588.01,70.61) .. (587.46,70.61) .. controls (586.91,70.61) and (586.46,70.13) .. (586.46,69.54) -- cycle ;
\draw [color={rgb, 255:red, 128; green, 128; blue, 128 }  ,draw opacity=0.69 ][line width=2.25]    (504.08,35.41) -- (515.03,45.6) ;
\draw [color={rgb, 255:red, 128; green, 128; blue, 128 }  ,draw opacity=0.68 ][line width=2.25]    (531.74,71.93) -- (518.08,88.29) ;
\draw [color={rgb, 255:red, 128; green, 128; blue, 128 }  ,draw opacity=0.68 ][line width=2.25]    (587.46,70.61) -- (585.08,79.41) ;
\draw [color={rgb, 255:red, 128; green, 128; blue, 128 }  ,draw opacity=0.68 ][line width=2.25]    (584.96,50.47) -- (597.08,40.41) ;
\draw [color={rgb, 255:red, 184; green, 233; blue, 134 }  ,draw opacity=1 ][fill={rgb, 255:red, 255; green, 255; blue, 255 }  ,fill opacity=1 ][line width=2.25]    (335.08,242.41) .. controls (295.08,269.41) and (265.08,249.41) .. (335.02,241.69) ;
\draw  [fill={rgb, 255:red, 74; green, 144; blue, 226 }  ,fill opacity=1 ] (49.81,57.51) .. controls (49.81,56.23) and (50.8,55.2) .. (52.02,55.2) .. controls (53.24,55.2) and (54.24,56.23) .. (54.24,57.51) .. controls (54.24,58.79) and (53.24,59.83) .. (52.02,59.83) .. controls (50.8,59.83) and (49.81,58.79) .. (49.81,57.51) -- cycle ;
\draw [color={rgb, 255:red, 128; green, 128; blue, 128 }  ,draw opacity=0.68 ][line width=2.25]    (54.11,58.12) -- (90.08,56.29) ;
\draw [color={rgb, 255:red, 128; green, 128; blue, 128 }  ,draw opacity=0.68 ][line width=2.25]    (53.11,56.12) -- (70.08,23.67) ;
\draw [color={rgb, 255:red, 128; green, 128; blue, 128 }  ,draw opacity=0.68 ][line width=2.25]    (82.08,80.67) -- (53.02,58.83) ;
\draw [color={rgb, 255:red, 128; green, 128; blue, 128 }  ,draw opacity=0.68 ][line width=2.25]    (35.04,90.64) -- (52.02,58.2) ;
\draw [color={rgb, 255:red, 128; green, 128; blue, 128 }  ,draw opacity=0.68 ][line width=2.25]    (42.08,20.67) -- (51.02,55.2) ;
\draw [color={rgb, 255:red, 128; green, 128; blue, 128 }  ,draw opacity=0.66 ][line width=2.25]    (482.08,155.67) -- (453.02,191.83) ;
\draw [color={rgb, 255:red, 184; green, 233; blue, 134 }  ,draw opacity=1 ][fill={rgb, 255:red, 255; green, 255; blue, 255 }  ,fill opacity=1 ][line width=2.25]    (40.57,118.1) .. controls (45.57,111.1) and (66.57,111.1) .. (78.12,112.35) ;
\draw [color={rgb, 255:red, 184; green, 233; blue, 134 }  ,draw opacity=1 ][fill={rgb, 255:red, 255; green, 255; blue, 255 }  ,fill opacity=1 ][line width=2.25]    (464.08,244.41) .. controls (545.08,277.41) and (587.08,205.41) .. (464.02,243.69) ;
\draw [color={rgb, 255:red, 184; green, 233; blue, 134 }  ,draw opacity=1 ][fill={rgb, 255:red, 255; green, 255; blue, 255 }  ,fill opacity=1 ][line width=2.25]    (307.24,184.51) .. controls (312.24,177.51) and (370.08,216.41) .. (449.64,193.54) ;
\draw [color={rgb, 255:red, 184; green, 233; blue, 134 }  ,draw opacity=1 ][fill={rgb, 255:red, 255; green, 255; blue, 255 }  ,fill opacity=1 ][line width=2.25]    (450.64,194.54) .. controls (435.08,214.41) and (438.08,234.41) .. (459.76,243.76) ;
\draw [color={rgb, 255:red, 184; green, 233; blue, 134 }  ,draw opacity=1 ][fill={rgb, 255:red, 255; green, 255; blue, 255 }  ,fill opacity=1 ][line width=2.25]    (454.06,193.54) .. controls (471.51,195.41) and (478.51,232.41) .. (464.19,242.76) ;
\draw [color={rgb, 255:red, 128; green, 128; blue, 128 }  ,draw opacity=0.69 ][line width=2.25]    (517.09,47.11) -- (531.08,49.41) ;
\draw [color={rgb, 255:red, 128; green, 128; blue, 128 }  ,draw opacity=0.69 ][line width=2.25]    (505.75,69.63) -- (531.74,70.93) ;
\draw [color={rgb, 255:red, 128; green, 128; blue, 128 }  ,draw opacity=0.69 ][line width=2.25]    (506.08,75.41) -- (531.74,71.93) ;
\draw [color={rgb, 255:red, 128; green, 128; blue, 128 }  ,draw opacity=0.69 ][line width=2.25]    (572.08,52.41) -- (584.29,51.27) ;
\draw [color={rgb, 255:red, 128; green, 128; blue, 128 }  ,draw opacity=0.69 ][line width=2.25]    (588.87,68.87) -- (603.08,66.41) ;
\draw [color={rgb, 255:red, 128; green, 128; blue, 128 }  ,draw opacity=0.69 ][line width=2.25]    (588.46,71.54) -- (604.08,75.41) ;
\draw [color={rgb, 255:red, 128; green, 128; blue, 128 }  ,draw opacity=0.69 ][line width=2.25]    (576.08,67.41) -- (586.46,69.54) ;
\draw [color={rgb, 255:red, 128; green, 128; blue, 128 }  ,draw opacity=0.69 ][line width=2.25]    (587.46,68.54) -- (591.08,60.41) ;
\draw [color={rgb, 255:red, 128; green, 128; blue, 128 }  ,draw opacity=0.69 ][line width=2.25]    (583.08,61.41) -- (584.97,52.54) ;
\draw [color={rgb, 255:red, 128; green, 128; blue, 128 }  ,draw opacity=0.69 ][line width=2.25]    (585.96,51.47) -- (598.08,50.41) ;

\draw (155,201.4) node [anchor=north west][inner sep=0.75pt]    {$e^{Q_{\mathbf{reg}}} =\sum $};
\draw (334,52.4) node [anchor=north west][inner sep=0.75pt]  [font=\scriptsize]  {$\Omega ^{0,\bullet }\left( X^{n} ,( \mathrm{Sym}\ \mathcal{L})^{\boxtimes n}( *\Delta )\right) \ni v:$};
\draw (119,49.4) node [anchor=north west][inner sep=0.75pt]  [font=\scriptsize]  {$\mathrm{Sym}\ \mathcal{L}$};
\draw (93,103.4) node [anchor=north west][inner sep=0.75pt]  [font=\scriptsize]  {$Q\in \Omega ^{0,\bullet }\left( X^{2} ,E\boxtimes E\right)^{\sigma _{2}}\text{ }$};

\end{tikzpicture}
    \caption{$e^{Q_{\mathbf{reg}}}$}
    \label{Regular}
\end{figure}

For any $x\in X$, we choose an open neighbourhood $U$ with local coordinate $z.$ Then we can split the Szeg\"{o} kernel into the singular part and the regular part
\begin{equation}\label{SplitPropagator}
P|_{U\times U}=\mathfrak{P}+\mathfrak{Q},\quad \mathfrak{P}=\frac{\mathbf{Id}\cdot dz}{z_1-z_2},\mathfrak{Q}\in \Omega^{0,0}(U\times U,E\boxtimes E),
\end{equation}

here we use the canonical isomorphism $E\simeq E^{\vee}\otimes \omega_X$ and denote $\mathbf{Id}\cdot dz\in E\boxtimes E^{\vee}\otimes \omega_X\simeq E\boxtimes E$.

We define
$$
\mathcal{W}^{\tau^z_{U}}:\Omega^{0,\bullet}\left(U^n,(\mathrm{Sym}\ \mathcal{L})^{\boxtimes n}(*\Delta) \right)\xrightarrow{e^{\mathfrak{P}_{\mathbf{sing}}+\mathfrak{Q}_{\mathbf{reg}}}}\Omega^{0,\bullet}\left(U^n,(\mathrm{Sym}\ \mathcal{L})^{\boxtimes n}(*\Delta) \right).
$$
Within these notations, we can rewrite $\mathcal{W}^{\mathbf{v}}(v;\tilde{v})$ locally as
$$
\mathcal{W}^{\mathbf{v}}(v;\tilde{v})|_U=\Vec{\mu}_{\mathrm{Sym}}\left( \mathcal{W}^{\tau^z_U}(\tau^{z\ \boxtimes n}_U\tilde{v})\right).
$$
Recall that $\mathcal{W}^{\mathbf{v}}$ is defined in (\ref{NormalOrdering}) and $\tilde{v}\in \Omega^{0,\bullet}\left(U^n,\mathcal{L}^{\flat\boxtimes n}(*\Delta)\right)$.

Here we reformulate Wick the theorem using chiral algebras as we will use later.

\begin{thm}\label{WickTheoremChiral}
    We have
    $$
    \tau^{z}_U\mu_{\mathcal{A}}(\eta \cdot v_1dz_1\boxtimes v_2dz_2)=\mu_{\mathrm{Sym}}\left(\eta \cdot e^{\mathfrak{P}_{\mathbf{sing}}} \tau^{z}_U(v_1)dz_1\boxtimes  \tau^{z}_U(v_2)dz_2\right).
    $$
    Here $z_1$ and $z_2$ are two copies of the local coordinate $z$ on $U\times U$.
\end{thm}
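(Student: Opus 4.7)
The plan is to reduce the identity to the explicit formula (\ref{ChiralProd}) for the chiral product on $\mathcal{A}=\mathscr{U}(\mathcal{L})^{\flat}$ and then recognize the sum of Wick contractions produced by $\iota^{-1}\circ c$ as the exponential $e^{\mathfrak{P}_{\mathbf{sing}}}$. Since both sides of the desired identity are $\mathcal{O}_{U^2}$-linear in $\eta$ and compatible with the right $\mathcal{D}_{U^2}$-module structure, it suffices to verify the equality when each $v_i$ is a PBW monomial, i.e., when $v_i$ corresponds under $\tau^{z}_{U}$ to a product $\prod_k \partial^{n_k}e_{i_k}$ in $\mathrm{Sym}\,\mathcal{L}|_U$.

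First, I would compute $\iota^{-1}\circ c\bigl((z_1-z_2)^{N}\cdot v_1\boxtimes v_2\bigr)$ in the vacuum module $\mathbf{U}_{X^2}(\mathcal{L})$. The map $c$ sends $v_1\boxtimes v_2$ to the ordered product $p_1^{*}v_1\cdot p_2^{*}v_2$ in $U(\mathcal{L}^{\natural}_{X^2})$, while the inverse of $\iota$ expresses this element in the canonical basis of $\mathbf{U}_{X^2}(\mathcal{L})$ by repeatedly applying the commutation relation coming from the central extension,
\[
\Bigl[\tfrac{e_i}{t-z_1},\,\tfrac{e_j}{t-z_2}\Bigr]=\frac{\omega_{ij}}{z_1-z_2},
\]
together with its $\partial_{z_i}$-derivatives. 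Each such commutator removes one generator from each side and introduces a rational kernel that coincides with one application of $\partial_{\mathfrak{P}}$, because $\mathfrak{P}_{\mathbf{sing}}=\tfrac{\mathbf{Id}\cdot dz}{z_1-z_2}$ and its derivatives are precisely the kernels needed to handle the differential-operator factors $\partial^{n_k}$ appearing in the monomials.

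Summing over all matchings between generators of $v_1$ and generators of $v_2$ produces the expansion of $e^{\mathfrak{P}_{\mathbf{sing}}}$ applied to $\tau^{z}_{U}(v_1)dz_1\boxtimes\tau^{z}_{U}(v_2)dz_2$; the uncontracted residue is the normally-ordered product, which maps under $\tau^{z}_{U}$ to the commutative product $\tau^{z}_{U}(v_1)\cdot\tau^{z}_{U}(v_2)$ in $\mathrm{Sym}\,\mathcal{L}$. Finally, multiplying by $\mu_{\omega}\bigl(\tfrac{dz_1\boxtimes dz_2}{(z_1-z_2)^{N}}\bigr)$ is precisely the step that, in the construction of the commutative chiral algebra $\mathrm{Sym}\,\mathcal{L}$, upgrades the external product to the chiral operation $\mu_{\mathrm{Sym}}$; this yields the right-hand side of the claimed identity.

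The main obstacle is the combinatorial bookkeeping of the derivatives. Under $\tau^{z}_{U}$ the section $\tfrac{n!\,e_i}{(t-z)^{n+1}}|0\rangle$ corresponds to $e_i\otimes\partial^{n}$, so the commutator of two such monomials produces a pole of order $n+m+2$ along the diagonal. Matching this with the formal expansion of $e^{\mathfrak{P}_{\mathbf{sing}}}$ rests on the identity $\partial_{z_1}^{n}\partial_{z_2}^{m}\tfrac{1}{z_1-z_2}=\tfrac{(-1)^{m}(n+m)!}{(z_1-z_2)^{n+m+1}}$ together with the factorials in the PBW identification of Remark~\ref{PBW}. The linear case $v_1=e_i$, $v_2=e_j$ is already worked out explicitly in the calculation following Definition \ref{DefnChiralWeyl}, and the general case follows by the same mechanism once the multiplicities are matched carefully.
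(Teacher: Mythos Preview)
Your proposal is correct and follows essentially the same approach as the paper's proof in Appendix~\ref{WickThm}: both reduce to PBW monomials, compute $\iota^{-1}\circ c$ by commuting the generators $\frac{e_i}{(t-z_1)^{k+1}}$ past $\frac{e_j}{(t-z_2)^{l+1}}$, and match the resulting rational kernels with $\partial_{z_1}^{k}\partial_{z_2}^{l}\tfrac{1}{z_1-z_2}$ to recognize $e^{\mathfrak{P}_{\mathbf{sing}}}$. The paper phrases the key step as the identity $\iota(v_1\otimes v_2)=c(e^{-\mathfrak{P}_{\mathbf{sing}}}v_1\boxtimes v_2)$, which is exactly the inverse of what you describe, and then plugs this into the formula~(\ref{ChiralProd}) for $\mu_{\mathcal{A}}$ just as you outline.
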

\begin{proof}
The proof is in Appendix \ref{WickThm}.
\end{proof}

The following result is established in \cite{gui2021elliptic}.
\begin{thm}\label{ChiralIntertwines}
    The map $\mathcal{W}^{\tau^z_U}$ is compatible with $\mathcal{D}$-module structure and intertwines the chiral operation. In other words, we have
    $$
    \mathcal{W}^{\tau^z_U}\left(\tau^z_U\mu_{\mathcal{A}}(v)\right)=\mu_{\mathrm{Sym}}\left(\mathcal{W}^{\tau^z_U}(\tau^{z\ \boxtimes 2}_U(v))\right)\quad v\in \Omega^{0,\bullet}\left(X^2,\mathcal{A}^{\boxtimes 2}(*\Delta)\right).
    $$
\end{thm}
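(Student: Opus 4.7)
The plan is to deduce the intertwining identity by chaining together the Wick theorem (Theorem \ref{WickTheoremChiral}) with the compatibility \eqref{Qpullback} of the regular kernel operator with the diagonal restriction, and using that the bidifferential operators $\partial_{\mathfrak{P}}$ and $\partial_{\mathfrak{Q}}$ commute. The $\mathcal{D}$-module compatibility will be immediate: both $\partial_{\mathfrak{P}}$ and $\partial_{\mathfrak{Q}}$ are constructed from the $\mathcal{O}_X$-linear symplectic pairing $E\otimes_{\mathcal{O}_X}E\to \omega_X$, so by the same reasoning used earlier in the section for $\partial_e$ and $\partial_K$, they commute with the right $\mathcal{D}_X$-action on each tensor factor, and hence so does their exponential $\mathcal{W}^{\tau^z_U}=e^{\mathfrak{P}_{\mathbf{sing}}+\mathfrak{Q}_{\mathbf{reg}}}$.

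For the intertwining identity, first I would apply Theorem \ref{WickTheoremChiral}, extended by $\mathcal{O}_X$-linearity from the generating case to arbitrary $v\in\Omega^{0,\bullet}(X^2,\mathcal{A}^{\boxtimes 2}(*\Delta))$, to obtain
$$\tau^z_U\mu_{\mathcal{A}}(v)=\mu_{\mathrm{Sym}}\bigl(e^{\mathfrak{P}_{\mathbf{sing}}}\tau^{z\boxtimes 2}_U(v)\bigr).$$
The right-hand side lives in the single-factor sector $\Delta_*\mathrm{Sym}\ \mathcal{L}$. On such a one-factor section the singular operator $e^{\mathfrak{P}_{\mathbf{sing}}}$ acts as the identity, since by construction of $e^{K_{\mathbf{sing}}}$ singular kernels cannot produce self-loops, so $\mathcal{W}^{\tau^z_U}$ restricted to the single-factor sector equals $e^{\mathfrak{Q}_{\mathbf{reg}}|_{n=1}}$. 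Applying $\mathcal{W}^{\tau^z_U}$ therefore yields
$$\mathcal{W}^{\tau^z_U}\bigl(\tau^z_U\mu_{\mathcal{A}}(v)\bigr)=e^{\mathfrak{Q}_{\mathbf{reg}}|_{n=1}}\,\mu_{\mathrm{Sym}}\bigl(e^{\mathfrak{P}_{\mathbf{sing}}}\tau^{z\boxtimes 2}_U(v)\bigr).$$

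The crucial step is the commutation
$$e^{\mathfrak{Q}_{\mathbf{reg}}|_{n=1}}\circ\mu_{\mathrm{Sym}}=\mu_{\mathrm{Sym}}\circ e^{\mathfrak{Q}_{\mathbf{reg}}|_{n=2}}.$$
Since $\mu_{\mathrm{Sym}}$ factors as the residue $\mu_{\omega}$ on the form part followed by the diagonal pullback $\Delta^*$ and commutative multiplication on the symmetric-algebra part, and since $\mathfrak{Q}$ is regular (so does not touch the poles involved in the residue), this reduces essentially to \eqref{Qpullback}; combinatorially, a cross-contraction between the two factors becomes, after $\Delta^*$, a self-loop in the merged factor, and the count of pairs matches via $\binom{k_1+k_2}{2}=\binom{k_1}{2}+\binom{k_2}{2}+k_1k_2$. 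Combined with the fact that $\partial_{\mathfrak{P}}$ and $\partial_{\mathfrak{Q}}$ are even bidifferential operators that commute, so that $e^{\mathfrak{Q}_{\mathbf{reg}}|_{n=2}}e^{\mathfrak{P}_{\mathbf{sing}}}=e^{\mathfrak{P}_{\mathbf{sing}}+\mathfrak{Q}_{\mathbf{reg}}}|_{n=2}=\mathcal{W}^{\tau^z_U}|_{n=2}$, this yields the desired identity. The main obstacle will be the rigorous verification of the commutation above: while the $\Delta^*$-aspect is handled by \eqref{Qpullback}, one must separately confirm its compatibility with the residue $\mu_{\omega}$ on the singular form factor coming from $\mathfrak{P}_{\mathbf{sing}}$, which calls for careful local bookkeeping of the Wick contractions in the combined form-and-module setting as in \cite{gui2021elliptic}.
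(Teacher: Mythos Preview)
Your proposal is correct and follows essentially the same approach as the paper: both reduce the intertwining identity to the Wick theorem (Theorem \ref{WickTheoremChiral}) together with the commutation of $e^{\mathfrak{Q}_{\mathbf{reg}}}$ with $\mu_{\mathrm{Sym}}$, the latter resting on \eqref{Qpullback}. The paper runs the computation from the $\mu_{\mathrm{Sym}}$ side rather than the $\mu_{\mathcal{A}}$ side, and handles precisely the ``main obstacle'' you flag---compatibility of $e^{\mathfrak{Q}_{\mathbf{reg}}}$ with the residue $\mu_{\omega}$---by an explicit expansion of $\mu_{\omega}$ as $\sum f_{kl}\,dz\otimes_{\mathcal{D}_{X\to X^2}}P_{kl}(\partial_{z_1},\partial_{z_2})$ and then moving $e^{\mathfrak{Q}_{\mathbf{reg}}}$ past these differential operators using the right $\mathcal{D}^{\mathrm{op}}$-module structure on $(\mathrm{Sym}\ \mathcal{L})^l$ before invoking \eqref{Qpullback}.
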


\begin{proof}
The compatibility with $\mathcal{D}$-module structure is obvious. To prove the intertwining property, we need some preparation. We write
$$
e^{\mathfrak{P}_{\mathbf{sing}}}\tau^{z}_U(v_1)dz_1\boxtimes\tau^{z}_U(v_2)dz_2=\sum_{k=1}^{N_1} w_k a^k dz_1\boxtimes b^k dz_2,
$$
where
$$
w_k a^k dz_1\boxtimes b^k dz_2\in \Omega^{0,\bullet}(U\times U,(\mathrm{Sym}\ \mathcal{L})^{\boxtimes 2}(*\Delta)),\quad k=1,\dots,N_1,
$$
and
$$
\mu_{\omega}(\eta\cdot w_kdz_1\boxtimes dz_2)=\sum_{l=1}^{N_2} f_{kl}dz\otimes_{\mathcal{D}_{X\rightarrow X^2}} P_{kl}(\partial_{z_1},\partial_{z_2}).
$$
We write
$$
P_{kl}(\partial_{z_1},\partial_{z_2})(f\cdot g)=\sum_{r=1}^{N_3} P_{kl}^{r\blacktriangle}(\partial_{z_1},\partial_{z_2})f\cdot  P_{kl}^{r\blacktriangledown}(\partial_{z_1},\partial_{z_2})g.
$$
for local functions $f,g$ on $U\times U$.

With the above notations, we have
\begin{align*}
  \mu_{\mathrm{Sym}}\left(\mathcal{W}^{\tau^{z}_U}(\eta\cdot\tau^{z}_U(v_1)dz_1\boxtimes\tau^{z}_U(v_2)dz_2))\right)&=\mu_{\mathrm{Sym}}\left(e^{\mathfrak{P}_{\mathbf{sing}}+\mathfrak{Q}_{\mathbf{reg}}}(\eta\cdot\tau^{z}_U(v_1)dz_1\boxtimes\tau^{z}_U(v_2)dz_2))\right)\\
  &=\mu_{\mathrm{Sym}}\left(e^{\mathfrak{Q}_{\mathbf{reg}}}(\eta\cdot\sum_{k=1}^{N_1} w_k a^k dz_1\boxtimes b^k dz_2\right)\\
  &=\sum_{k=1}^{N_1}\mu_{\omega}\left(\eta\cdot w_k dz_1\boxtimes dz_2\right)\cdot(e^{\mathfrak{Q}_{\mathbf{reg}}}a^k \boxtimes b^k\otimes 1)\\
  &=\sum_{k=1}^{N_1} \sum_{l=1}^{N_2} f_{kl}dz\otimes_{\mathcal{D}_{X\rightarrow X^2}} P_{kl}(\partial_{z_1},\partial_{z_2})\cdot(e^{\mathfrak{Q}_{\mathbf{reg}}}a^k \boxtimes b^k\otimes 1)\\
\end{align*}

Now notice that $a^k\boxtimes b^k\in (\mathrm{Sym}\ \mathcal{L})^{l\boxtimes 2}=(\mathrm{Sym}\ \mathcal{L})^{\boxtimes 2}\otimes \omega_{X^2}^{-1}$ where the left $\mathcal{D}_{X^2}$-module structure is given by the right $\mathcal{D}^{\mathrm{op}}_{X^2}\simeq\omega_{X^2}\otimes\mathcal{D}_{X^2}\otimes \omega_{X^2}^{-1} $-module structure. This means that we can write
$$
P_{kl}(\partial_{z_1},\partial_{z_2})\cdot(e^{\mathfrak{Q}_{\mathbf{reg}}}a^k \boxtimes b^k\otimes 1)=\sum_{r=1}^{N_3} e^{\mathfrak{Q}_{\mathbf{reg}}}a^k \boxtimes b^kP_{kl}^{r\blacktriangle}(\partial_{z_1},\partial_{z_2})^{\mathrm{op}}\otimes  P_{kl}^{r\blacktriangledown}(\partial_{z_1},\partial_{z_2}).
$$

Using the fact that $\Delta^*e^{\mathfrak{Q}_{\mathbf{reg}}}=e^{\mathfrak{Q}_{\mathbf{reg}}}\Delta^*$ we calculate
$$
\sum_{k=1}^{N_1}\mu_{\omega}\left(\eta\cdot w_k dz_1\boxtimes dz_2\right)\cdot(e^{\mathfrak{Q}_{\mathbf{reg}}}a^k \boxtimes b^k\otimes 1)
$$
$$
=\sum_{k=1}^{N_1}\sum_{l=1}^{N_2} \sum_{r=1}^{N_3} f_{kl}dz\otimes_{\mathcal{D}_{X\rightarrow X^2}} \cdot(e^{\mathfrak{Q}_{\mathbf{reg}}}a^k \boxtimes b^k P^{r\blacktriangle}_{kl}(\partial_{z_1},\partial_{z_2})^{\mathrm{op}}\otimes P^{r\blacktriangledown}_{kl}(\partial_{z_1},\partial_{z_2}))
$$
$$
=\sum_{k=1}^{N_1}\sum^{N_2}_{l=1} \sum^{N_3}_{r=1} f_{kl}dz \cdot \Delta^*(e^{\mathfrak{Q}_{\mathbf{reg}}}a^k \boxtimes b^k P^{r\blacktriangle}_{kl}(\partial_{z_1},\partial_{z_2})^{\mathrm{op}})\otimes_{\mathcal{D}_{X\rightarrow X^2}}  P^{r\blacktriangledown}_{kl}(\partial_{z_1},\partial_{z_2})
$$
$$
=e^{\mathfrak{Q}_{\mathbf{reg}}}\sum_{k=1}^{N_1}\sum^{N_2}_{l=1} \sum^{N_3}_{r=1} f_{kl}dz \cdot \Delta^*(a^k \boxtimes b^k P^{r\blacktriangle}_{kl}(\partial_{z_1},\partial_{z_2})^{\mathrm{op}})\otimes_{\mathcal{D}_{X\rightarrow X^2}}  P^{r\blacktriangledown}_{kl}(\partial_{z_1},\partial_{z_2})
$$
$$
=e^{\mathfrak{Q}_{\mathbf{reg}}}\tau^z_U(\mu_{\mathcal{A}}(\eta\cdot v_1dz_1\boxtimes v_2dz_2))=    \mathcal{W}^{\tau^z_U}(\tau^z_U\mu_{\mathcal{A}}(v)).
$$
Here in the last step, we use the Wick theorem. The proof is complete.
\end{proof}

\begin{rem}
    Strictly speaking, the theorem in \cite[Section 3.3.]{gui2021elliptic} concerns the unit chiral operation. However, the proof works for the commutative chiral algebra $\mathrm{Sym}\ \mathcal{L}$ as well. Here we reformulate the proof for the reader's convenience.
\end{rem}

\begin{prop}\label{IndependentChoice}
    Let $v\in\Omega^{0,\bullet}(X,\mathcal{A})$ and $\tilde{v}\in \Omega^{0,\bullet}(X^n,\mathcal{L}^{\flat\boxtimes n}(*\Delta))$  such that
    $    \Vec{\mu}_{\mathcal{A}}(\tilde{v})=v.$    The element $\mathcal{W}^{\mathbf{v}}(v;\tilde{v})$ is independent of the choice of the presentation $\tilde{v}$. Furthermore, $\mathcal{W}^{\mathbf{v}}(v;\tilde{v})$ has the following explicit formula if we choose a local chart $U$ with coordinate $z$
    $$
    \mathcal{W}^{\mathbf{v}}(v;\tilde{v})|_U=e^{\mathfrak{Q}_{\mathbf{reg}}}\tau^z_U(v).
    $$
\end{prop}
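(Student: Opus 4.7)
The strategy is to establish the explicit local formula first; independence of the choice of $\tilde{v}$ then follows because the right-hand side $e^{\mathfrak{Q}_{\mathbf{reg}}}\tau^z_U(v)$ depends only on $v$.

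The first step is to reconcile the global expression $\vec{\mu}_{\mathrm{Sym}}(e^{\partial_P}\tilde{v})$ with the local operator $\mathcal{W}^{\tau^z_U}=e^{\mathfrak{P}_{\mathbf{sing}}+\mathfrak{Q}_{\mathbf{reg}}}$. Using the splitting $P|_{U\times U}=\mathfrak{P}+\mathfrak{Q}$ from \eqref{SplitPropagator}, the inter-marked-point contractions with $P$ decompose into those with $\mathfrak{P}$ (which produce the $\mathfrak{P}_{\mathbf{sing}}$ contribution) and those with $\mathfrak{Q}$ (contributing the inter-point part of $\mathfrak{Q}_{\mathbf{reg}}$). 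Because $\tilde{v}$ is a section of $\mathcal{L}^{\flat\boxtimes n}(*\Delta)$, each marked point carries at most a single $E$-factor, and therefore the intra-marked-point part of $\mathfrak{Q}_{\mathbf{reg}}$ acts as zero on $\tau^{z\boxtimes n}_U\tilde{v}$. This identifies $\tau^{z\boxtimes n}_U(e^{\partial_P}\tilde{v})$ with $\mathcal{W}^{\tau^z_U}(\tau^{z\boxtimes n}_U\tilde{v})$, so that
$$\mathcal{W}^{\mathbf{v}}(v;\tilde{v})|_U=\vec{\mu}_{\mathrm{Sym}}\bigl(\mathcal{W}^{\tau^z_U}(\tau^{z\boxtimes n}_U\tilde{v})\bigr).$$

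The next step is to iterate Theorem \ref{ChiralIntertwines} by induction on $n$ in order to push the iterated chiral product $\vec{\mu}_{\mathrm{Sym}}$ through $\mathcal{W}^{\tau^z_U}$. The base case $n=2$ is Theorem \ref{ChiralIntertwines} itself. For the inductive step one writes $\vec{\mu}_{\mathcal{A}}(\tilde{v})=\mu_{\mathcal{A}}\bigl(\vec{\mu}_{\mathcal{A}}(\tilde{v}_{1,\dots,n-1})\boxtimes\tilde{v}_n\bigr)$, applies Theorem \ref{ChiralIntertwines} to the outermost chiral product, and uses that the contractions defining $\mathcal{W}^{\tau^z_U}$ commute with the chiral product acting on the remaining spectator factors (the operators $\partial_{\mathfrak{P}}$ and $\partial_{\mathfrak{Q}}$ are compatible with the $\mathcal{D}$-module structure and only touch the factors they are indexed by). This yields
$$\vec{\mu}_{\mathrm{Sym}}\bigl(\mathcal{W}^{\tau^z_U}(\tau^{z\boxtimes n}_U\tilde{v})\bigr)=\mathcal{W}^{\tau^z_U}\bigl(\tau^z_U\vec{\mu}_{\mathcal{A}}(\tilde{v})\bigr)=\mathcal{W}^{\tau^z_U}(\tau^z_U v).$$
Since $\tau^z_U(v)$ sits over a single marked point on $U$, the operator $\mathfrak{P}_{\mathbf{sing}}$ is identically zero on it, whereas $\mathfrak{Q}_{\mathbf{reg}}$ still produces genuine self-contractions within the symmetric powers; hence $\mathcal{W}^{\tau^z_U}(\tau^z_U v)=e^{\mathfrak{Q}_{\mathbf{reg}}}\tau^z_U(v)$, establishing the explicit local formula.

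Independence of $\tilde{v}$ is now immediate: the right-hand side $e^{\mathfrak{Q}_{\mathbf{reg}}}\tau^z_U(v)$ is manifestly determined by $v$ alone, so $\mathcal{W}^{\mathbf{v}}(v;\tilde{v})|_U$ does not depend on the presentation. Since $\mathcal{W}^{\mathbf{v}}(v;\tilde{v})$ is a globally defined section of $\Delta_*\mathrm{Sym}\,\mathcal{L}$ that agrees with this local expression on every chart of an open cover of $X$, global independence follows by standard gluing; moreover the local formula has no poles, so the image in fact lies in $\mathrm{Sym}\,\mathcal{L}\subset\Delta_*\mathrm{Sym}\,\mathcal{L}$. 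The main obstacle is the careful iteration of Theorem \ref{ChiralIntertwines}: one must verify that partial applications of the regularization operators $\mathfrak{P}_{\mathbf{sing}}$ and $\mathfrak{Q}_{\mathbf{reg}}$ on subsets of marked points intertwine with the chiral product in the correct way, and that successive restrictions to partial diagonals reorganize the contractions coming from $\mathfrak{Q}$ into the full intra-diagonal operator $\mathfrak{Q}_{\mathbf{reg}}$ at the end.
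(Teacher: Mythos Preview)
Your proof is correct and follows essentially the same route as the paper: rewrite $\mathcal{W}^{\mathbf{v}}(v;\tilde{v})|_U$ as $\vec{\mu}_{\mathrm{Sym}}\bigl(\mathcal{W}^{\tau^z_U}(\tau^{z\boxtimes n}_U\tilde{v})\bigr)$, invoke Theorem~\ref{ChiralIntertwines} to commute $\mathcal{W}^{\tau^z_U}$ past the iterated chiral product, and conclude that the result equals $\mathcal{W}^{\tau^z_U}(\tau^z_U v)=e^{\mathfrak{Q}_{\mathbf{reg}}}\tau^z_U(v)$, which visibly depends only on $v$. The paper's proof is terser (it applies Theorem~\ref{ChiralIntertwines} in one line without spelling out the induction or the reduction of $\mathcal{W}^{\tau^z_U}$ to $e^{\mathfrak{Q}_{\mathbf{reg}}}$ on a single point), but the argument is the same.
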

\begin{proof}
     We will prove this proposition by showing that
     $$
     \mathcal{W}^{\mathbf{v}}|_U(v|_U)=\mathcal{W}^{\tau^z_U}(\tau^z_U(v|_U))
     $$
     and thus only depends on $v$ itself.

     Suppose that we have a presentation $\tilde{v}_U\in \Omega^{0,\bullet}\left(U^n,\mathcal{L}^{\flat\boxtimes n}(*\Delta)\right)$ of $v$. We have
     $$
     \Vec{\mu}_{\mathcal{A}}(\tilde{v}_U)=v|_U.
     $$
Then by definition
$$
\mathcal{W}^{\mathbf{v}}(v;\tilde{v})=\Vec{\mu}_{\mathrm{Sym}}(\mathcal{W}^{\tau^z_{U}}\tau_U^{z\ \boxtimes n}(\tilde{v}))
$$
         Using Theorem \ref{ChiralIntertwines}, we get

    \begin{align*}
\mathcal{W}^{\mathbf{v}}(v;\tilde{v})&=\Vec{\mu}_{\mathrm{Sym}}(\mathcal{W}^{\tau^z_U}(\tau_U^{z\ \boxtimes n}(\tilde{v})))\\
&=\mathcal{W}^{\tau^z_U}(\tau^z_U\Vec{\mu}_{\mathcal{A}}(\tilde{v}))\\
&=\mathcal{W}^{\tau^z_U}(\tau^z_U(v)).
    \end{align*}

Now suppose that we have two global presentations $\tilde{v},\vardbtilde{v}$. Then
$$
\mathcal{W}^{\mathbf{v}}(v;\tilde{v})|_U=\mathcal{W}^{\tau^{z}_U}(\tau^{z}_U(v))=\mathcal{W}^{\mathbf{v}}(v;\vardbtilde{v})|_U.
$$
We conclude that $\mathcal{W}^{\mathbf{v}}(v;\tilde{v})=\mathcal{W}^{\mathbf{v}}(v;\vardbtilde{v}).$

\end{proof}

We have the following corollary.
\begin{cor}
Let $\{U_i\}_{i\in I}$ be a finite open cover of $X$ with local coordinate $z_i$ for each $U_i$.    The collection of maps $\{\mathcal{W}^{\tau_{U_i}}(\tau_{U_i}(v))\}_{i\in I}$ can be glued to a globally well defined map $$
    \mathcal{W}^{\mathbf{v}}:\mathcal{A}\rightarrow \mathrm{Sym}\ \mathcal{L}.$$
\end{cor}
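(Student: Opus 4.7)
The corollary is essentially a direct consequence of Proposition \ref{IndependentChoice} together with the partition-of-unity lemma at the start of this subsection. The plan is to define the global map intrinsically via an arbitrary global presentation, then observe that the local formulas of the corollary arise as its restrictions.

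First I would note that the existence lemma at the beginning of this subsection guarantees, for every $v \in \Omega^{0,\bullet}(X,\mathcal{A})$, the existence of some global $\tilde v \in \Omega^{0,\bullet}(X^n, \mathcal{L}^{\flat\boxtimes n}(*\Delta))$ with $\vec\mu_{\mathcal{A}}(\tilde v) = v$. This allows the definition $\mathcal{W}^{\mathbf{v}}(v) := \mathcal{W}^{\mathbf{v}}(v;\tilde v)$ as an element of $\Omega^{0,\bullet}(X,\mathrm{Sym}\ \mathcal{L})$, and Proposition \ref{IndependentChoice} asserts that this is independent of the choice of $\tilde v$, so $\mathcal{W}^{\mathbf{v}}(v)$ is well defined globally.

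Next I would check that the local expressions of the corollary agree with restrictions of this global object. By the second assertion of Proposition \ref{IndependentChoice}, for each chart $(U_i, z_i)$ we have
$$
\mathcal{W}^{\mathbf{v}}(v)|_{U_i} \;=\; \mathcal{W}^{\mathbf{v}}(v;\tilde v)|_{U_i} \;=\; \mathcal{W}^{\tau^{z_i}_{U_i}}\bigl(\tau^{z_i}_{U_i}(v|_{U_i})\bigr).
$$
Therefore on any overlap $U_i \cap U_j$, both local expressions $\mathcal{W}^{\tau^{z_i}_{U_i}}(\tau^{z_i}_{U_i}(v))$ and $\mathcal{W}^{\tau^{z_j}_{U_j}}(\tau^{z_j}_{U_j}(v))$ coincide with $\mathcal{W}^{\mathbf{v}}(v)|_{U_i \cap U_j}$, so they match on overlaps and assemble into the globally defined map $\mathcal{W}^{\mathbf{v}} : \mathcal{A} \to \mathrm{Sym}\ \mathcal{L}$.

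There is essentially no obstacle here: all the real content has already been absorbed into Theorem \ref{ChiralIntertwines} (which ensures the iterated chiral product intertwines $\mathcal{W}^{\tau^z_U}$ with $\vec\mu$) and Proposition \ref{IndependentChoice}. The only small point to watch is $\mathcal{O}_X$-linearity, but since the two sides of the local formula are built from natural operations compatible with the $\mathcal{D}_X$-module (and hence $\mathcal{O}_X$-module) structure, no further compatibility check is required. One could alternatively phrase the entire argument sheaf-theoretically by noting that Proposition \ref{IndependentChoice} identifies the local rules $v \mapsto \mathcal{W}^{\tau^{z_i}_{U_i}}(\tau^{z_i}_{U_i}(v))$ with the restriction of a presheaf morphism, and sheafification is automatic.
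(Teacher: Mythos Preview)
Your proposal is correct and follows essentially the same approach as the paper: define $\mathcal{W}^{\mathbf{v}}(v)$ globally via a choice of global presentation $\tilde v$ (whose existence is the partition-of-unity lemma), invoke Proposition \ref{IndependentChoice} for independence of $\tilde v$, and then use the local formula from that proposition to identify the restriction to each $U_i$ with $\mathcal{W}^{\tau^{z_i}_{U_i}}(\tau^{z_i}_{U_i}(v))$. The paper's proof is the same argument, stated more tersely.
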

\begin{proof}
    We only need to note that $\mathcal{W}^{\mathbf{v}}(v;\tilde{v})$ is globally well defined since we can find a global section $\tilde{v}$ such that $\Vec{\mu}_{\mathcal{A}}(\tilde{v})=v$. Moreover, it is equal to $\mathcal{W}^{\tau^{z_i}_{U_i}}(\tau^{z_i}_{U_i}(v))$ on each $U_i$ by Proposition \ref{IndependentChoice}.
\end{proof}
\subsection{ Trace map on chiral envelopes}

In this section, we will give the construction of the trace map on chiral Weyl algebra for any Riemann surface.

We define
$$
\mathcal{W}_{\mathrm{ch}}:\Omega^{0,\bullet}(X^n,\mathcal{A}^{\boxtimes n}(*\Delta))\rightarrow \Omega^{0,\bullet}(X^n,(\mathrm{Sym}\ \mathcal{L})^{\boxtimes n}(*\Delta)).
$$
to be the composition of the following sequence of maps
$$
\Omega^{0,\bullet}(X^n,\mathcal{A}^{\boxtimes n}(*\Delta))\xrightarrow{(\mathcal{W}^{\mathbf{v}})^{\boxtimes n}} \Omega^{0,\bullet}(X^n,(\mathrm{Sym}\ \mathcal{L})^{\boxtimes n}(*\Delta))\xrightarrow {e^{P_{\mathbf{sing}}}}\Omega^{0,\bullet}(X^n,(\mathrm{Sym}\ \mathcal{L})^{\boxtimes n}(*\Delta)).
$$

Notice that locally on $U^n$, the map $\mathcal{W}_{\mathrm{ch}}$ is exactly $\mathcal{W}^{\tau^{z}_U}\circ \tau^{z\ \boxtimes n}_U$. This can be seen as follows, we split the propagator locally on $U$ as in (\ref{SplitPropagator}) and observe that
$$
e^{\mathfrak{Q}_{\mathbf{Sing}}}\mathcal{W}^{\mathbf{v}\boxtimes n}=e^{\mathfrak{Q}_{\mathbf{reg}}}.
$$
Then on $U$, we have
$$
e^{P_{\mathbf{sing}}}\mathcal{W}^{\mathbf{v}\boxtimes n}=e^{\mathfrak{P}_{\mathbf{sing}}+\mathfrak{Q}_{\mathbf{Sing}}}\mathcal{W}^{\mathbf{v}\boxtimes n}=e^{\mathfrak{P}_{\mathbf{sing}}+\mathfrak{Q}_{\mathbf{reg}}}.
$$

Denote the projection map
$$
\Omega^{0,\bullet}(X^n,(\mathrm{Sym}\ \mathcal{L})^{\boxtimes n}(*\Delta))\rightarrow  \Omega^{0,\bullet}(X^n,\omega_{X^n}(*\Delta))
$$
by $\mathbf{p}$ which is induced by projection $\mathrm{Sym}\ \mathcal{L}\rightarrow \mathrm{Sym}^0\mathcal{L}=\omega_X$.

Then we construct the trace map as follows, define
$$
\mathrm{Tr}_{\mathrm{ch}}: \Omega^{0,\bullet}(X^n,\mathcal{A}^{\boxtimes n}(*\Delta))\rightarrow O_{\mathrm{BV}}.
$$
Here
$$
\mathrm{Tr}_{\mathrm{ch}}(\eta)[\mathbf{e}]:=\sum_{k\geq 0}\frac{1}{k!}\mathrm{tr}_{\omega}\circ \mathbf{p}\left(\partial^k_{\mathbf{e}}\mathcal{W}_{\mathrm{ch}}(\eta)\right).
$$
As before, this map extends naturally to a map $\mathrm{Tr}_{\mathrm{ch}}:\tilde{C}^{\mathrm{ch}}(X,\mathcal{A})_{\mathcal{Q}}\rightarrow O_{\mathrm{BV}}$

\begin{thm}\label{MainTheorem510}
    The map
    $$
    \mathrm{Tr}_{\mathrm{ch}}:(\tilde{C}^{\mathrm{ch}}(X,\mathcal{A})_{\mathcal{Q}},d^{\mathrm{ch}}_{\mathcal{A}})\rightarrow (O_{\mathrm{BV}},-\Delta_{\mathrm{BV}})
    $$
    is a chain map which is furthermore a quasi-isomorphism.
\end{thm}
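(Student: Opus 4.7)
The proof naturally splits into two parts: establishing that $\mathrm{Tr}_{\mathrm{ch}}$ is a chain map, and then promoting it to a quasi-isomorphism.

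For the chain map property, the strategy is to reduce to the already-known trace $\mathrm{tr}_\omega$ on the unit chiral complex, exploiting Theorem \ref{ChiralIntertwines} which says that $\mathcal{W}^{\mathbf{v}}$ intertwines $\mu_{\mathcal{A}}$ with $\mu_{\mathrm{Sym}}$. First I would verify that applying $(\mathcal{W}^{\mathbf{v}})^{\boxtimes n}$ transports the chiral differential $d_\mu$ on $\mathcal{A}^{\boxtimes n}$ to the chiral differential for the commutative chiral algebra $\mathrm{Sym}\,\mathcal{L}$; compatibility with the remaining pieces $d_{\mathrm{DR}}$ and $d_{\mathrm{ho}}$ of $d^{\mathrm{ch}}_{\mathcal{A}}$ is automatic since $\mathcal{W}^{\mathbf{v}}$ is $\mathcal{D}$-linear. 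Next, $e^{P_{\mathbf{sing}}}$ creates the diagonal poles needed to form genuine elements of the (commutative) chiral complex so that after projecting via $\mathbf{p}$ to $\omega_{X^n}$-components, the identity $\mathrm{tr}_\omega \circ d^{\mathrm{ch}}_{\omega} = 0$ of \cite{beilinson2004chiral} handles the contributions of $d_\mu + d_{\mathrm{DR}} + d_{\mathrm{ho}}$. The only extra contribution comes from the interaction of $\bar\partial$ with $e^{P_{\mathbf{sing}}}$: as in the Lie$*$ computation, $[\bar\partial,\partial_P] = \partial_{\bar\partial P}$, and substituting the harmonic decomposition of $\bar\partial P$ from (\ref{ZeroModes}) into the sum $\sum_k \tfrac{1}{k!}\partial^k_{\mathbf{e}}$ yields precisely $-\Delta_{\mathrm{BV}}$ acting on $\mathrm{Tr}_{\mathrm{ch}}(\eta)[\mathbf{e}]$.

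For the quasi-isomorphism, I would use the chiral PBW filtration $\mathcal{A} = \bigcup_n \mathcal{A}_n$ of Remark \ref{PBW} and the induced filtration on $\tilde{C}^{\mathrm{ch}}(X,\mathcal{A})_{\mathcal{Q}}$, matched with the polynomial-degree filtration on $O_{\mathrm{BV}} = O(\mathbb{H}(X,E))$ (which bounds the number of $\partial_{\mathbf{e}}$ derivatives appearing). Since $\mathrm{Sym}\,\mathcal{L}$ is commutative, the propagator contribution $e^{P_{\mathbf{sing}}}$ can only reduce symmetric degree, so $\mathrm{Tr}_{\mathrm{ch}}$ respects the filtrations. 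On the associated graded, $\mathrm{gr}\,\mathcal{A} \simeq \mathrm{Sym}\,\mathcal{L}$, the propagator contribution drops out and the induced graded map identifies with the Beilinson--Drinfeld chiral HKR quasi-isomorphism
\[
\mathrm{Sym}\bigl(R\Gamma_{\mathrm{DR}}(X,E_{\mathcal{D}})[1]\bigr) \xrightarrow{\sim} C^{\mathrm{ch}}\bigl(X,\mathrm{Sym}\,E_{\mathcal{D}}\bigr)
\]
discussed in the introduction, where the identification of $R\Gamma_{\mathrm{DR}}(X,E_{\mathcal{D}})$ with $\mathbb{H}(X,E)$ is precisely the Hodge-theoretic contraction furnished by the Szeg\"o kernel. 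A standard spectral sequence comparison for filtered complexes then promotes this graded equivalence to a quasi-isomorphism of the deformed complexes.

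The principal obstacle is the chain map verification, since $d^{\mathrm{ch}}_{\mathcal{A}} = d_{\mathrm{DR}} + \bar\partial + d_\mu + d_{\mathrm{ho}}$ has four components and each must be tracked through the composition $(\mathcal{W}^{\mathbf{v}})^{\boxtimes n} \to e^{P_{\mathbf{sing}}} \to \partial^k_{\mathbf{e}} \to \mathbf{p} \to \mathrm{tr}_\omega$. In particular, one must check that the Koszul-sign conventions in the chiral chain complex compare correctly with the Leibniz identities for the bidifferential operators $\partial_P$ and $\partial_K$, and that the local computation (where Proposition \ref{IndependentChoice} gives the clean formula $\mathcal{W}^{\mathbf{v}}|_U = e^{\mathfrak{Q}_{\mathbf{reg}}} \tau^z_U$) glues to a global chain-level statement. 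Once this combinatorics is arranged, both the QME identity $(d^{\mathrm{ch}}_{\mathcal{A}} + \Delta_{\mathrm{BV}})\mathrm{Tr}_{\mathrm{ch}} = 0$ and the PBW spectral sequence comparison follow from identities that can be checked locally and then transported globally via Corollary 4.13.
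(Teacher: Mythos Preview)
Your chain-map argument has a genuine gap in the attribution of which operator intertwines the chiral products. You assert that $(\mathcal{W}^{\mathbf{v}})^{\boxtimes n}$ alone transports $d_\mu$ on $\mathcal{A}$ to $d_\mu$ on $\mathrm{Sym}\,\mathcal{L}$, but this is false: by Proposition~\ref{IndependentChoice} one has locally $\mathcal{W}^{\mathbf{v}}|_U = e^{\mathfrak{Q}_{\mathbf{reg}}}\tau^z_U$, while the Wick theorem (Theorem~\ref{WickTheoremChiral}) reads $\tau^z_U\mu_{\mathcal{A}} = \mu_{\mathrm{Sym}}\circ e^{\mathfrak{P}_{\mathbf{sing}}}(\tau^z_U)^{\boxtimes 2}$. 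The singular piece $e^{\mathfrak{P}_{\mathbf{sing}}}$ is therefore indispensable for the intertwining, and Theorem~\ref{ChiralIntertwines} is a statement about the \emph{full} map $\mathcal{W}_{\mathrm{ch}} = e^{P_{\mathbf{sing}}}\circ(\mathcal{W}^{\mathbf{v}})^{\boxtimes n}$ (locally $=\mathcal{W}^{\tau^z_U}\circ\tau^{z\,\boxtimes n}_U$), not about $(\mathcal{W}^{\mathbf{v}})^{\boxtimes n}$. This misattribution cascades: your claim that the only $\bar\partial$-defect comes from $e^{P_{\mathbf{sing}}}$ is also wrong, since on $U\times U$ one has $\bar\partial\mathfrak{P}=0$ (as a form with poles) while $\bar\partial\mathfrak{Q}=\bar\partial P$, so the self-loop factor $\mathcal{W}^{\mathbf{v}}$ carries its own nontrivial $\bar\partial$-commutator that your decomposition misses.

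The paper avoids this bookkeeping by a different reduction. Instead of separating $\mathcal{W}^{\mathbf{v}}$ from $e^{P_{\mathbf{sing}}}$, it presents an arbitrary chiral chain as an iterated chiral product $\eta=\vec{\mu}_{\mathcal{A}}(\tilde\eta)$ of a Lie$^*$ chain $\tilde\eta$ built from linear fields. By the very definition of $\mathcal{W}^{\mathbf{v}}$ together with Theorem~\ref{ChiralIntertwines}, one then has $\mathcal{W}_{\mathrm{ch}}(\eta)=\vec{\mu}_{\mathrm{Sym}}\bigl(\mathcal{W}_{\mathrm{Lie}}(\tilde\eta)\bigr)$, and both the $d_\mu$-intertwining and the identity $[\bar\partial,\mathcal{W}_{\mathrm{ch}}]=\Delta_{\mathrm{BV}}\circ\mathcal{W}_{\mathrm{ch}}$ reduce at once to the Lie$^*$ case already established in Section~5.1, where only the single operator $e^{\partial_P}$ with no self-loops is in play. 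Your PBW spectral-sequence outline for the quasi-isomorphism is reasonable and is essentially what the paper invokes by citing \cite[Theorem~3.18]{gui2021elliptic}.
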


\begin{proof}
We need to prove that
    $$
    [\bar{\partial},\mathcal{W}_{\mathrm{ch}}]=\Delta_{\mathrm{BV}}\circ \mathcal{W}_{\mathrm{ch}}.
    $$

    However, since we can present a section of the chiral algebra as the chiral product of linear fields in $\mathcal{L}$, the above identity reduces to the one for Lie* algebra which we already proved in our construction of Trace map for Lie* algebra $\mathcal{L}^{\flat}$. More precisely, we can present a chiral chain $\eta$ as a iterated chiral product of a Lie* chain $\tilde{\eta}$
    $$
    \eta=\Vec{\mu}_{\mathcal{A}}(\tilde{\eta}).
    $$
    Then
    \begin{align*}
      \mathcal{W}_{\mathrm{ch}}(\bar{\partial}\eta) &=   \mathcal{W}_{\mathrm{ch}}\left(\bar{\partial}\Vec{\mu}_{\mathcal{A}}(\tilde{\eta})\right)  \\
       & =\Vec{\mu}_{\mathrm{Sym}}(  \mathcal{W}_{\mathrm{Lie}}\left(\bar{\partial}\tilde{\eta})\right)\\
       &=\bar{\partial}\Vec{\mu}_{\mathrm{Sym}}(  \mathcal{W}_{\mathrm{Lie}}\left(\tilde{\eta})\right)-\Delta_{\mathrm{BV}}\Vec{\mu}_{\mathrm{Sym}}(  \mathcal{W}_{\mathrm{Lie}}\left(\tilde{\eta})\right)\\
       &=\bar{\partial}\mathcal{W}_{\mathrm{ch}}(\eta)-\Delta_{\mathrm{BV}}\mathcal{W}_{\mathrm{ch}}(\eta).
    \end{align*}

    Now using Theorem \ref{ChiralIntertwines}, we have
    \begin{align*}
    \mathrm{Tr}_{\mathrm{ch}}(d^{\mathrm{ch}}_{\mathcal{A}}\eta)[\mathbf{e}]&=\sum_{k\geq 0}\frac{1}{k!}\mathrm{tr}_{\omega}\circ \mathbf{p}\left(\partial^k_{\mathbf{e}}\mathcal{W}_{\mathrm{ch}}(d^{\mathrm{ch}}_{\mathcal{A}}\eta)\right)\\
    &=\sum_{k\geq 0}\frac{1}{k!}\mathrm{tr}_{\omega}\circ \mathbf{p}\left((d_{\mathrm{Sym}}^{\mathrm{ch}}-\Delta_{\mathrm{BV}})\partial^k_{\mathbf{e}}\mathcal{W}_{\mathrm{ch}}(\eta)\right)\\
    &=\underbrace{\sum_{k\geq 0}\frac{1}{k!}\mathrm{tr}_{\omega}\circ d_{\omega}^{\mathrm{ch}}\mathbf{p}\left(\partial^k_{\mathbf{e}}\mathcal{W}_{\mathrm{ch}}(\eta)\right)}_{=0}-\sum_{k\geq 0}\frac{1}{k!}\mathrm{tr}_{\omega}\circ \mathbf{p}\left(\Delta_{\mathrm{BV}}\partial^k_{\mathbf{e}}\mathcal{W}_{\mathrm{ch}}(\eta)\right)\\
    &=-\left(\Delta_{\mathrm{BV}}\mathrm{Tr}_{\mathrm{ch}}(\eta)\right)[\mathbf{e}].
\end{align*}

Finally, by applying the same argument as in \cite[Theorem 3.18]{gui2021elliptic}, we see that $\mathrm{Tr}_{\mathrm{ch}}$ is a quasi-isomorphism.

\end{proof}

\section{Examples}
In this section, we compute some examples of our trace map. Suppose that the chiral homology is one-dimensional and concentrated in the degree 0.  Suppose that we have an operator $\mathcal{O}\in \Omega^{0,\bullet}(X,\mathcal{A})$.
Since $\mathcal{O}$ is closed as a chiral chain, it must be homologous to the scalar multiplication of the unit. In other words, we have
$$
\mathcal{O}=\langle\mathcal{O} \rangle\cdot 1+\text{exact-term}\in C^{ch}(X,\mathcal{A}).
$$
The complex number $\langle \mathcal{O}\rangle$ is called the expectation value of $\mathcal{O}$. In general, it is hard to find the closed formula of this number as the chiral chain complex is very complicated. However, the trace map helps to compute this number quickly. Applying the trace map to the above identity, we get
$$
\mathrm{Tr}(\mathcal{O})=\langle \mathcal{O}\rangle,
$$
here use the fact that the trace map annihilates the exact term.

Here we give an informal discussion on $\beta\gamma$-system using physical terminology. Denote the path integral of $\beta\gamma$-system by $I_{\beta\gamma}$
$$
I_{\beta\gamma}=\int \mathcal{D}\mathcal{E}\cdot e^{\int_X\beta\bar{\partial}\gamma },\quad \mathcal{E}=\Omega^{0,\bullet}(X,E).
$$
Suppose there are no zero modes, it is known that $I_{\beta\gamma}$ is equal to the determinant of $\bar{\partial}$. As well-known to physicists, doing variation along some parameters (like moduli of curve or bundle) corresponds to the insertion of some operators. Roughly speaking,
$$
\frac{dI_{\beta\gamma}}{d\tau}=\int \mathcal{D}\mathcal{E}\cdot e^{\int_X\beta\bar{\partial}\gamma }\cdot T_{\mu}=\langle T_{\mu}\rangle.
$$
A similar thing happens when we do the variation along the moduli of bundles.

We calculate these operator insertions and find the precise match with Fay's calculations on variation of analytic torsion.

\subsection{Insertion of a modified current operator}

Let $F$ be a holomorphic bundle over $X$, then the direct sum $E=F\oplus F^{\vee}\otimes_{\mathcal{O}_X}\omega_X$ has a natural symplectic pairing
$$
E\otimes_{\mathcal{O}_X} E\rightarrow\omega
$$
induced by the pairing $F\otimes_{\mathcal{O}_X} (F^{\vee}\otimes_{\mathcal{O}_X} \omega_X)\rightarrow \omega_X$.

Consider a smooth differential form $\nu\in \Omega^{0,1}(X,\mathrm{End}(F))$. Choosing a local coordinate $z$ and a local holomorphic frame $\{e_i\}$ of $F$, we can write $\nu=\nu^i_{j,\bar{z}}d\bar{z}\cdot e_i\otimes e^j$. One may try to define a global section of chiral algebra as follows
$$
J_{\nu}\stackrel{?}{=}\nu^i_{j,\bar{z}}d\bar{z}\cdot \beta_{i,z}\gamma^jdz.
$$
Here $\gamma^j$ and $\beta_{i,z}dz$ correspond to $e^j$ and $e_i$, respectively. The problem is that under coordinate change, $\beta_{i,z}\gamma^j$ behaves differently from $e_i\otimes e^j$. However, we can correct this after choosing  Hermitian metrics on $E$ and $\omega_X^{\frac{1}{2}}$.  Following \cite{fay1992kernel}, we use the notation $h$ (resp. $\rho$) for the Hermitian metric on $E$ (resp. $\omega_X^{\frac{1}{2}}$). It will be clear that our later construction is independent of the spin structure $\omega_X^{\frac{1}{2}}$.

\begin{prop}
    The following expression is independent of the choice of local coordinate $z$
    $$    J_{\nu}:=\nu^i_{j,\bar{z}}d\bar{z}\cdot \beta_{i,z}\gamma^jdz-\mathrm{tr}\left(\nu\cdot\rho h^{-1}\partial_z(h\rho^{-1})dz\right)
    $$
Thus we have a globally well defined section $J_{\nu}\in \Omega^{0,1}(X,\mathcal{A}).
    $
\end{prop}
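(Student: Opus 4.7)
The plan is to verify directly that the local formula defining $J_\nu$ is independent of both the local coordinate and the local holomorphic frame of $F$, and therefore patches to a global section. Take two overlapping charts $(U,z)$ and $(\tilde U,w)$ with $w=\phi(z)$, together with local holomorphic frames $\{e_i\}$ and $\{\tilde e_a=g^i_a e_i\}$ of $F$ on $U\cap\tilde U$. Via the trivializations $\tau_U^z:\mathcal A|_U\to\mathrm{Sym}\,\mathcal L|_U$ of Remark \ref{PBW} and the chiral Wick formula of Theorem \ref{WickTheoremChiral}, I will re-express $\beta_{i,z}\gamma^j\,dz$ in the $(w,\tilde e)$-trivialization and isolate the resulting anomaly.

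First, I will compute the anomaly of the bare expression $\nu^i_{j,\bar z}\,d\bar z\cdot\beta_{i,z}\gamma^j\,dz$. The normal-ordered product $:\!\beta\gamma\!:$ is defined by subtracting the chart-dependent singular part $\mathfrak P=\mathbf{Id}\cdot dz/(z_1-z_2)$ of the propagator appearing in (\ref{SplitPropagator}); consequently $:\!\beta_i\gamma^j\!:(z)\,dz$ differs from the naive tensor transform $\tilde g^{-1,a}_i g^j_b\cdot:\!\tilde\beta_a\tilde\gamma^b\!:(w)\,dw$ by a scalar $(1,0)$-form equal to the difference of the diagonal restrictions of the regular part $\mathfrak Q$ in the two charts. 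Using the local expansion of the Szeg\"o kernel in terms of the Hermitian data $(h,\rho)$ (cf. \cite{fay1992kernel}), I will identify this diagonal restriction, viewed as an $\mathrm{End}(F)$-valued $(1,0)$-form, with the Chern-connection one-form $\rho h^{-1}\partial_z(h\rho^{-1})$ of $F\otimes\omega_X^{-1/2}$ written in the chosen frame, so that its change-of-chart law is the standard gauge transformation $A\mapsto g^{-1}Ag+g^{-1}\partial g$, plus the coordinate-change contribution coming from $\rho$ as a metric on $\omega_X^{1/2}$.

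Second, I will pair this connection one-form with the tensorial $(0,1)$-form $\nu$ and take the trace over $\mathrm{End}(F)$; the resulting $(1,1)$-form transforms with precisely the opposite sign of the first-step anomaly, so that the sum $J_\nu$ is invariant and thus defines a global section of $\Omega^{0,1}(X,\mathcal A)$. The apparent dependence on the spin structure $\omega_X^{1/2}$ is cosmetic: only $\rho^{-1}\partial_z\rho$ enters, and its ambiguity under the choice of square-root is killed by tracing with $\nu\in\mathrm{End}(F)$.

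The main obstacle is the bookkeeping of the anomaly calculation---matching signs, factors of $\tfrac12$, and the interleaving of $h$- and $\rho$-dependence---especially because the naive coordinate transformation of $:\!\beta_i\gamma^j\!:$ involves both an $\mathrm{End}(F)$-valued gauge anomaly (proportional to $h^{-1}\partial h$) and a scalar conformal anomaly (proportional to $\rho^{-1}\partial\rho$) which only coalesce into $\rho h^{-1}\partial_z(h\rho^{-1})$ upon tracing. If direct manipulation with transition functions becomes unwieldy, I will instead work through the globally defined normal ordering map $\mathcal W^{\mathbf v}:\mathcal A\to\mathrm{Sym}\,\mathcal L$ of Proposition \ref{IndependentChoice}: by Theorem \ref{ChiralIntertwines}, checking that $J_\nu$ is globally well defined reduces to verifying that $\mathcal W^{\mathbf v}(J_\nu)$ is a well-defined section of $\mathrm{Sym}\,\mathcal L$, which is a purely geometric statement about the diagonal restriction of the Szeg\"o kernel and is insulated from the subtleties of the chiral algebra transition functions.
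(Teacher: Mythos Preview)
Your approach is valid but substantially different from the paper's. The paper gives a two-line direct computation: under a coordinate change $w=\phi(z)$ the current $\nu^i_{j,\bar z}d\bar z\cdot\beta_{i,z}\gamma^j\,dz$ picks up the anomaly $\tfrac12\,\mathrm{tr}(\nu\,\theta')$ with $\theta=dw/dz$ (this is the standard conformal-weight-one anomaly of $:\!\beta\gamma\!:$ in the $\beta\gamma$ system), and under a frame change $e_i\mapsto\varsigma^k_ie_k$ it picks up $\mathrm{tr}(\nu\,\partial_z\log\varsigma)$ (the affine Kac--Moody anomaly). Each is then matched against the corresponding piece of the transformation law of $\rho h^{-1}\partial_z(h\rho^{-1})$. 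No Szeg\"o kernel, no $\mathcal W^{\mathbf v}$, no reference to Fay is needed at this stage.

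Your route through $\mathcal W^{\mathbf v}$ works, and the fallback paragraph is in fact the cleanest version of it: $\mathcal W^{\mathbf v}$ is a globally defined bijection $\mathcal A\to\mathrm{Sym}\,\mathcal L$, and locally $\mathcal W^{\mathbf v}(\nu\cdot:\!\beta\gamma\!:)=\nu\cdot(\beta\gamma)+\mathrm{tr}(\nu\,a_0)$, so $\mathcal W^{\mathbf v}(J_\nu)=\nu\cdot(\beta\gamma)+\mathrm{tr}\bigl(\nu\,(a_0-\rho h^{-1}\partial_z(h\rho^{-1}))\bigr)$; the first term is tensorial in $\mathrm{Sym}^2\mathcal L$ and the second is global by Fay's result. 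One correction to your main plan: the diagonal restriction of $\mathfrak Q$ is $a_0$, and $a_0$ is \emph{not} equal to the connection form $\rho h^{-1}\partial_z(h\rho^{-1})$ --- only their difference is a global section of $\mathrm{End}(F)\otimes\omega_X$ (this is precisely the Fay statement the paper quotes immediately after the proposition). What you need, and what is true, is that they share the same change-of-chart law; saying you ``identify'' them is incorrect as written. The trade-off: the paper's argument is elementary and self-contained but requires knowing the VOA transformation formulas; yours leverages the machinery of Section~5 and outsources the anomaly bookkeeping to Fay, at the cost of invoking a nontrivial analytic fact about the Szeg\"o kernel.
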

\begin{proof}
    If we change the local coordinate, we have
    $$
\nu^i_{j,\bar{w}}d\bar{w}\cdot \beta_{i,w}\gamma^jdw=\nu^i_{j,\bar{z}}d\bar{z}\cdot \beta_{i,z}\gamma^jdz+\mathrm{tr}(\frac{1}{2}\nu\theta')
    $$
Here $\theta(z)=\frac{dw}{dz}$ and $\theta'=\frac{d\theta}{dz}$. It is the same as $\mathrm{tr}(\nu\cdot\rho_z h^{-1}\partial_z(h\rho^{-1}_zdz)).$

    If we change the holomorphic frame, we get
    $$
    \nu^{\tilde{i}}_{\tilde{j},\bar{z}}d\bar{z}\cdot \tilde{\beta}_{\tilde{i},z}\tilde{\gamma}^{\tilde{j}}dz=\nu^i_{j,\bar{z}}d\bar{z}\cdot \beta_{i,z}\gamma^jdz+\mathrm{tr}(\nu\partial_z\log \varsigma).
    $$
    Here $\varsigma(z)$ is the local transition function of the frame change. Again it is cancelled by the second term in $J_{\nu}$.

\end{proof}

Now for the rest of this subsection, we assume that $H^0(X,F)=H^1(X,F)=0$. Following the notation in \cite{fay1992kernel}, we expand the Szeg\"{o} kernel near the diagonal
$$
P(z_1,z_2;F)=\frac{\mathrm{Id}\cdot dz}{z_1-z_2}+a_0(z_1;F)+a_1(z_1;F)(z_2-z_1)+O(|z_1-z_2|^2).
$$
The coefficients $a_0$ satisfies that
$$
a_0(z;F)-\rho h^{-1}\partial_z(h\rho^{-1})\in \Omega^{0,0}(X,\mathrm{End}(F)\otimes \omega_X).
$$
See \cite[pp26,(2.10)]{fay1992kernel} for details.

Now we can compute the expectation value $\langle J_{\nu}\rangle$ using the trace map we constructed before.

\begin{thm}\label{InsertionCurrent}
    The expectation value of $J_{\nu}$ is given by the following formula
    $$
    \langle J_{\nu}\rangle=\mathrm{Tr}(J_{\nu})=\frac{1}{\pi}\int_X\mathrm{tr}[\nu\cdot (a_0(z,F)-\rho h^{-1}\partial_z(h\rho^{-1})]dz
    $$
\end{thm}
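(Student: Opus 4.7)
The plan is to directly apply the trace map $\mathrm{Tr}_{\mathcal{A}_E}=\mathrm{Tr}_{\mathrm{ch}}$ of Theorem \ref{MainTheorem510} to $J_\nu$ and reduce the computation to a local one-point calculation. Under the running assumption $H^0(X,F)=H^1(X,F)=0$, the space of harmonic forms $\mathbb{H}(X,E)$ vanishes, so $O_{\mathrm{BV}}=\mathcal{O}(\mathbb{H}(X,E))\simeq\mathbb{C}$ and only the $k=0$ summand contributes to the trace formula:
\[
\mathrm{Tr}_{\mathrm{ch}}(J_\nu)=\mathrm{tr}_{\omega}\circ\mathbf{p}\bigl(\mathcal{W}_{\mathrm{ch}}(J_\nu)\bigr).
\]
Moreover, since $J_\nu\in\Omega^{0,1}(X,\mathcal{A})$ lives on a single copy of $X$, the operator $e^{P_{\mathbf{sing}}}$ in the definition of $\mathcal{W}_{\mathrm{ch}}$ acts trivially (no pair of distinct variables to contract), and therefore $\mathcal{W}_{\mathrm{ch}}(J_\nu)=\mathcal{W}^{\mathbf v}(J_\nu)$.

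Next I would compute $\mathcal{W}^{\mathbf v}(J_\nu)$ locally using Proposition \ref{IndependentChoice}, which gives $\mathcal{W}^{\mathbf v}(J_\nu)|_U=e^{\mathfrak{Q}_{\mathbf{reg}}}\tau^z_U(J_\nu|_U)$, where $\mathfrak{Q}=P|_{U\times U}-\mathfrak{P}$ is the regular part of the Szeg\"o kernel, whose diagonal value is precisely $a_0(z;F)$. Under $\tau^z_U$ and the PBW identification (Remark \ref{PBW}), the linear-in-fields element $\beta_{i,z}\gamma^j\,dz$ corresponds to $e_i\cdot e^j\in\mathrm{Sym}^{2}\mathcal{L}$. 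Because this is a degree-two element, the exponential $e^{\mathfrak{Q}_{\mathbf{reg}}}$ truncates: only the zeroth and first Wick contractions survive, and of these only the single self-contraction produces a term in $\mathrm{Sym}^0\mathcal{L}=\omega_X$, which is the piece selected by $\mathbf{p}$. Using the symplectic pairing $E\otimes_{\mathcal{O}_X} E\to\omega_X$ and evaluating $\mathfrak{Q}$ at the diagonal, this contraction evaluates to $(a_0(z;F))^j_i\,dz$ paired into $\nu^i_{j,\bar z}d\bar z$, producing $\mathrm{tr}\bigl(\nu\cdot a_0(z;F)\bigr)\,d\bar z\wedge dz$.

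The second piece of $J_\nu$, namely $-\mathrm{tr}\bigl(\nu\cdot\rho h^{-1}\partial_z(h\rho^{-1})\bigr)\,dz$, lies already in $\omega_X\subset\mathcal{A}$ and passes through $\mathcal{W}^{\mathbf v}$ and $\mathbf{p}$ unchanged, producing $-\mathrm{tr}\bigl(\nu\cdot\rho h^{-1}\partial_z(h\rho^{-1})\bigr)d\bar z\wedge dz$. Adding the two contributions and applying $\mathrm{tr}_\omega$, which on $\Omega^{0,1}(X,\omega_X)\simeq\Omega^{1,1}(X)$ reduces to $\tfrac{1}{\pi}\int_X$ with the conventions of \cite{gui2021elliptic} (where the factor of $\pi$ arises from the distributional identity $\bar\partial\,\tfrac{1}{z}=\pi\delta_0$ inherent to the Szeg\"o kernel), yields exactly the claimed formula. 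Coordinate invariance of the result is automatic since $J_\nu$ is globally defined by Proposition \ref{InsertionCurrent}'s preceding proposition, and the integrand $a_0-\rho h^{-1}\partial_z(h\rho^{-1})$ is a genuine section of $\mathrm{End}(F)\otimes\omega_X$.

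The main point of care is the identification of the diagonal-contraction contribution with $a_0(z;F)$, together with tracking the normalization $1/\pi$ through $\mathrm{tr}_\omega$; the former is immediate from the definition of the Szeg\"o kernel's constant term in \cite[Ch.\ 2]{fay1992kernel} once the symplectic pairing $F\otimes (F^\vee\otimes\omega_X)\to\omega_X$ is used, and the latter is a bookkeeping exercise matching the conventions for the residue/regularized integral recalled after the definition of $\mathrm{tr}_\omega$. No higher-order Wick contractions appear because $J_\nu$ is quadratic in the generators, which makes this an especially clean illustration of the general trace formula.
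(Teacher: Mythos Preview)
Your proposal is correct and follows exactly the approach the paper takes: the paper's own proof consists of a single sentence pointing to the self-loop diagram (Fig.~\ref{Jcurrent}) as the source of the $\mathrm{tr}[\nu\cdot a_0(z,F)]$ term, and you have simply unpacked what that diagram means in terms of $\mathcal{W}^{\mathbf v}$, $e^{\mathfrak{Q}_{\mathbf{reg}}}$, and the projection $\mathbf{p}$. Your reductions (only $k=0$ survives because $\mathbb{H}(X,E)=0$; $e^{P_{\mathbf{sing}}}$ is trivial on one point; the single self-contraction of the quadratic term via $\mathfrak{Q}$ at the diagonal gives $a_0$) are exactly the content encoded in the figure, and the scalar correction term passing through unchanged is immediate since it already lies in $\omega_X$.
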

\begin{proof}
    The term $\mathrm{tr}[\nu\cdot a_0(z,F)]$ comes from the self-loop diagram (see Fig. \ref{Jcurrent}).

\begin{figure}
    \centering

\tikzset{every picture/.style={line width=0.75pt}} 

\begin{tikzpicture}[x=0.75pt,y=0.75pt,yscale=-1,xscale=1]

\draw   (185.97,81.32) .. controls (223.32,66.35) and (262.9,105.13) .. (308.3,102.36) .. controls (353.7,99.59) and (444.51,26.19) .. (464.3,114.83) .. controls (484.09,203.47) and (333.91,166.07) .. (294.33,160.53) .. controls (254.75,154.99) and (197.61,189.35) .. (172.1,173) .. controls (146.59,156.64) and (148.62,96.29) .. (185.97,81.32) -- cycle ;
\draw  [draw opacity=0] (435.57,115.61) .. controls (430.54,125.46) and (418.82,132.72) .. (404.94,133.39) .. controls (389.77,134.14) and (376.62,126.79) .. (371.76,115.97) -- (403.99,105.77) -- cycle ; \draw   (435.57,115.61) .. controls (430.54,125.46) and (418.82,132.72) .. (404.94,133.39) .. controls (389.77,134.14) and (376.62,126.79) .. (371.76,115.97) ;
\draw  [draw opacity=0] (378.91,126.44) .. controls (383.04,120.53) and (391.52,116.36) .. (401.38,116.14) .. controls (412.3,115.89) and (421.68,120.55) .. (425.4,127.31) -- (401.67,134.16) -- cycle ; \draw   (378.91,126.44) .. controls (383.04,120.53) and (391.52,116.36) .. (401.38,116.14) .. controls (412.3,115.89) and (421.68,120.55) .. (425.4,127.31) ;
\draw  [draw opacity=0] (240.35,123.97) .. controls (235.87,134.81) and (224.52,142.89) .. (210.99,143.63) .. controls (196.34,144.43) and (183.69,136.36) .. (179.37,124.59) -- (210.09,114.7) -- cycle ; \draw   (240.35,123.97) .. controls (235.87,134.81) and (224.52,142.89) .. (210.99,143.63) .. controls (196.34,144.43) and (183.69,136.36) .. (179.37,124.59) ;
\draw  [draw opacity=0] (187.01,135.06) .. controls (190.72,128.5) and (198.98,123.8) .. (208.63,123.56) .. controls (219.21,123.29) and (228.27,128.45) .. (231.57,135.84) -- (208.9,142.43) -- cycle ; \draw   (187.01,135.06) .. controls (190.72,128.5) and (198.98,123.8) .. (208.63,123.56) .. controls (219.21,123.29) and (228.27,128.45) .. (231.57,135.84) ;
\draw  [fill={rgb, 255:red, 74; green, 144; blue, 226 }  ,fill opacity=1 ] (294.76,137.75) .. controls (294.76,136.47) and (295.75,135.43) .. (296.97,135.43) .. controls (298.2,135.43) and (299.19,136.47) .. (299.19,137.75) .. controls (299.19,139.03) and (298.2,140.07) .. (296.97,140.07) .. controls (295.75,140.07) and (294.76,139.03) .. (294.76,137.75) -- cycle ;
\draw [color={rgb, 255:red, 184; green, 233; blue, 134 }  ,draw opacity=1 ][fill={rgb, 255:red, 255; green, 255; blue, 255 }  ,fill opacity=1 ][line width=2.25]    (299.08,138) .. controls (380.08,171) and (422.08,99) .. (299.02,137.27) ;

\draw (317,109.4) node [anchor=north west][inner sep=0.75pt]  [font=\footnotesize]  {$\beta $};
\draw (344,152.4) node [anchor=north west][inner sep=0.75pt]  [font=\footnotesize]  {$\gamma $};
\draw (269,128.4) node [anchor=north west][inner sep=0.75pt]  [font=\footnotesize]  {$J_{\nu }$};

\end{tikzpicture}
    \caption{$\langle J_{\nu}\rangle$}
    \label{Jcurrent}
\end{figure}
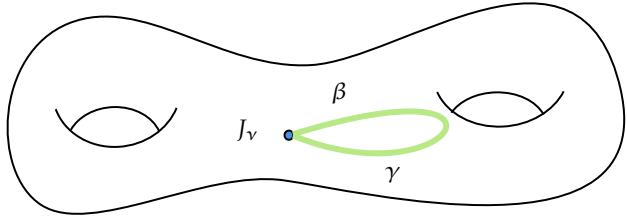

\end{proof}

\begin{rem}
If we choose $\nu$ to be a specific form related to the infinitesimal deformation of the holomorphic structure on $F$, the above formula matches the one in    \cite[pp79, Theorem 4.5]{fay1992kernel} (can be derived from \cite[pp60, (3.26)]{fay1992kernel}) which calculates the variation of analytic torsion.
\end{rem}

\subsection{Insertion of the modified energy-momentum tensor}

In this subsection, we turn to study the coupling between the energy-momentum tensor and Beltrami differentials.

We assume that $F$ is the trivial line bundle $X\times \mathbb{C}$. Let $\mu\in \Omega^{0,1}(X,\Theta_X)$ be a smooth (0,1)-form with valued in the tangent bundle $\Theta_X$. We can write $\mu$ locally as
$$
\mu=\mu^{z}_{\bar{z}}d\bar{z}\cdot \partial_z.
$$

Similar to the previous discussion, the naive guess
$$
T_{\mu}\stackrel{?}{=}\mu^z_{\bar{z}}d\bar{z}\cdot \beta_z\partial_z\gamma
$$
does not work either. But we can still correct it to a well-defined operator.

\begin{prop}
    We have the following well-defined global section
    $$
    \tilde{T}_{\mu}=\mu^{z}_{\bar{z}}d\bar{z}(\beta_z\cdot \partial_z\gamma-\frac{1}{3}\frac{\partial^2_z\rho}{\rho})dz\in \Omega^{0,1}(X,\mathcal{A})
    $$
\end{prop}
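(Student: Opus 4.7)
The plan is to follow the exact same template as the previous proposition for $J_\nu$: fix a local holomorphic coordinate change $z \mapsto w = w(z)$ and verify that the two local expressions for $\tilde T_\mu$ agree, so that they glue to a globally defined element of $\Omega^{0,1}(X,\mathcal A)$. A short bookkeeping shows that $\mu^z_{\bar z}\,d\bar z$ transforms by a factor of $w'$ and the trailing $dz$ by another factor of $w'$, so the required identity reduces to
\[
(w')^{2}\!\left[\beta_w\partial_w\gamma - \tfrac{1}{3}\tfrac{\partial_w^{2}\rho_w}{\rho_w}\right]
\;=\;
\beta_z\partial_z\gamma - \tfrac{1}{3}\tfrac{\partial_z^{2}\rho_z}{\rho_z}
\]
in $\mathcal A$, evaluated at the corresponding point of $X$.

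The first step computes the quantum anomaly of $\beta_z\partial_z\gamma$ under the coordinate change, viewed as a section of the chiral Weyl algebra. This can be extracted directly from the Wick formula (Theorem \ref{WickTheoremChiral}) together with the singular OPE $\beta(z_1)\gamma(z_2)\sim (z_1-z_2)^{-1}$, the needed double contraction producing a $c$-number piece proportional to the Schwarzian derivative. Conceptually, the identification of $\mathcal A$ with the vertex-algebra bundle attached to the standard conformal vector $T_0 = \beta\partial\gamma$ (Section~3.3 and Appendix \ref{VAbundleChiral}) makes $T_0$ a Virasoro element of central charge $c = 2$ (the bosonic $\beta\gamma$-system at $\lambda = 1$), and the $\mathrm{Aut}(\mathcal O)$-action on the Fock space implements the standard anomalous transformation
\[
(w')^{2}\beta_w\partial_w\gamma
\;=\;
\beta_z\partial_z\gamma + \tfrac{c}{12}\{w;z\}
\;=\;
\beta_z\partial_z\gamma + \tfrac{1}{6}\{w;z\},
\]
where $\{w;z\}=\tfrac{w'''}{w'}-\tfrac{3}{2}\bigl(\tfrac{w''}{w'}\bigr)^{2}$.

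The second step is a purely classical calculation of the transformation of $\tfrac{\partial^{2}\rho}{\rho}$. Interpreting $\rho$ as the squared pointwise norm of the canonical local frame $(dz)^{1/2}$ of $\omega_X^{1/2}$, one has $\rho_w = |w'|\rho_z$, hence $\log\rho_w = \tfrac{1}{2}\log w' + \tfrac{1}{2}\log\overline{w'} + \log\rho_z$. Applying $\partial_z = w'\partial_w$ twice and observing that the antiholomorphic pieces are annihilated by $\partial_z$, a direct computation yields
\[
(w')^{2}\tfrac{\partial_w^{2}\rho_w}{\rho_w}
\;=\;
\tfrac{\partial_z^{2}\rho_z}{\rho_z} + \tfrac{1}{2}\{w;z\}.
\]
Combining the two transformations, the Schwarzian contributions $\tfrac{1}{6}\{w;z\}$ from step one and $-\tfrac{1}{3}\cdot\tfrac{1}{2}\{w;z\}=-\tfrac{1}{6}\{w;z\}$ from step two cancel exactly, which establishes coordinate invariance. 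Independence of the chosen $r$-spin structure is automatic since $\rho$ is insensitive to a global sign change of the square-root frame.

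The main obstacle will be the first step: extracting the Schwarzian anomaly with the correct coefficient $\tfrac{c}{12}$ directly from the chiral-algebra definitions requires careful bookkeeping of the double contraction in the Wick product and the $\mathrm{Aut}(\mathcal O)$-twist built into the vertex-algebra bundle. The cleanest route is to import the standard vertex-algebra computation of the Virasoro central charge of the $\beta\gamma$-system through the identification of Section~3.3; once this is in place, the remaining content of the proof is the elementary calculus identity above for $\rho$.
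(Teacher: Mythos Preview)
Your proposal is correct and follows essentially the same approach as the paper: compute the Schwarzian anomaly $\tfrac{1}{6}\{w;z\}$ in the coordinate transformation of $\beta\partial\gamma$ (the paper writes this as $\tfrac{1}{6}\tfrac{\theta''}{\theta}-\tfrac{1}{4}(\tfrac{\theta'}{\theta})^2$ with $\theta=w'$), then cancel it against the transformation of $\tfrac{1}{3}\tfrac{\partial^2\rho}{\rho}$. Your version is considerably more explicit---you identify $c=2$ and actually carry out the $\rho$-calculation---whereas the paper simply asserts both transformation laws.
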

\begin{proof}
    Using the coordinate change formula for vertex algebra, we have
    $$
\beta_w\cdot \partial_w\gamma dw^{\otimes 2}=\beta_z\cdot \partial_z\gamma dz^{\otimes 2}+ \frac{1}{6}\frac{\theta''}{\theta}-\frac{1}{4} (\frac{\theta'}{\theta})^2.
    $$
    The second term is the Schwarzian derivative which is canceled out by the transformation formula of   $\frac{1}{3}\frac{\partial^2_z\rho}{\rho} dz^{\otimes 2}$.
\end{proof}

We have the following theorem which calculates the constant term of the trace map $\mathrm{Tr}(T_{\mu})$.

\begin{thm}
    The constant term of $\mathrm{Tr}(T_{\mu})$ is given by the following formula
    $$
    \frac{1}{\pi}\int_X\mathrm{tr}[\mu\cdot (a_1(z;E)-\partial_za_0(z;E)-\frac{1}{3}\rho^{-1}\partial^2_z\rho)]dz.
    $$
\end{thm}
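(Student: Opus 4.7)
The plan is to apply the explicit Feynman diagram description of $\mathrm{Tr}_{\mathrm{ch}}$ developed in Section 5 to the global section $\tilde{T}_\mu \in \Omega^{0,1}(X,\mathcal{A})$, and then extract the $\mathbf{e}=0$ (constant) component of $\mathrm{Tr}_{\mathrm{ch}}(\tilde T_\mu)\in O_{\mathrm{BV}}$. Since there is only a single insertion, $\mathcal{W}_{\mathrm{ch}}=\mathcal{W}^{\mathbf{v}}$, and by Proposition \ref{IndependentChoice} one has, in any local chart $(U,z)$ compatible with the chosen metrics $h$ and $\rho$,
$$\mathcal{W}^{\mathbf{v}}(\tilde T_\mu)|_U = e^{\mathfrak{Q}_{\mathbf{reg}}}\tau^z_U(\tilde T_\mu).$$
The constant term is then $\mathrm{tr}_\omega \circ \mathbf{p}$ applied to this expression, i.e.\ the part in which every $\beta,\gamma$-leg is contracted away by the regular kernel $\mathfrak{Q}$.

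First, I would split $\tilde T_\mu=\tilde T_\mu^{\mathrm{main}}+\tilde T_\mu^{\mathrm{cor}}$ with $\tilde T_\mu^{\mathrm{main}}=\mu^z_{\bar z}d\bar z\cdot\beta_z\partial_z\gamma\,dz$ and $\tilde T_\mu^{\mathrm{cor}}=-\tfrac{1}{3}\mu^z_{\bar z}d\bar z\cdot\rho^{-1}\partial_z^2\rho\,dz$. The correction piece already lies in $\omega_X\subset\mathcal{A}$, so $\mathbf{p}$ is the identity on it, and $\mathrm{tr}_\omega$ directly yields the contribution $-\tfrac{1}{3\pi}\int_X\mathrm{tr}[\mu\cdot\rho^{-1}\partial^2_z\rho]\,dz$, where the $1/\pi$ is the standard normalization of the unit chiral trace $\mathrm{tr}_\omega$ on the Dolbeault representative (cf.\ Section~2.3 of \cite{gui2021elliptic}).

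Second, for $\tilde T_\mu^{\mathrm{main}}$ I would lift to a two-point presentation $\tilde v\in\Omega^{0,1}(U^2,\mathcal{L}^{\flat\boxtimes 2}(*\Delta))$ with $\vec\mu_{\mathcal{A}}(\tilde v)=\tilde T_\mu^{\mathrm{main}}$, modelled on the OPE calculation preceding Remark \ref{PBW}: the normal-ordered product $\beta_z\partial_z\gamma$ corresponds under $\tau^z_U$ to $\Delta^*\!\bigl[\beta(z_1)\boxtimes\partial_{z_2}\gamma(z_2)\bigr]$ minus the singular counterterm built into $\mathfrak{P}_{\mathbf{sing}}$. Theorem \ref{WickTheoremChiral} then lets me exchange $\mathfrak{P}_{\mathbf{sing}}$ with the chiral-algebra normal ordering, reducing the self-contraction inside $e^{\mathfrak{Q}_{\mathbf{reg}}}$ to a single coincidence-limit diagram with one derivative on one leg. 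Using the expansion
$$\mathfrak{Q}(z_1,z_2)=a_0(z_1;E)+a_1(z_1;E)(z_2-z_1)+O((z_2-z_1)^2),$$
one computes $\partial_{z_2}\mathfrak{Q}(z_1,z_2)\big|_{z_2=z_1=z}=a_1(z;E)$ and $\partial_{z_1}\mathfrak{Q}(z_1,z_2)\big|_{z_2=z_1=z}=\partial_z a_0(z;E)-a_1(z;E)$. Because $\tau^z_U(\partial_z\gamma)$ arises from the PBW lift that distributes the $z$-derivative across both legs of the two-point section (the total-derivative piece $\partial_z(\mathfrak{Q}|_{\mathrm{diag}})$ is $d^{\mathrm{ch}}_\omega$-exact and hence killed by $\mathrm{tr}_\omega$), only the difference $\partial_{z_2}\mathfrak{Q}|_{\mathrm{diag}}-\partial_z(a_0)=a_1-\partial_z a_0$ survives in the trace.

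Finally, assembling both contributions gives the stated identity after multiplying by $\frac{1}{\pi}$ and integrating via $\mathrm{tr}_\omega$. The main obstacle is the second step: the careful identification of the two-point lift of the normal-ordered operator $\beta_z\partial_z\gamma$ and the bookkeeping of how the regular propagator $\mathfrak{Q}$ transfers a derivative between its two arguments. The key technical point is to show that under $\mathrm{tr}_\omega\circ\mathbf{p}$ the piece corresponding to $\partial_z(a_0|_{\mathrm{diag}})$ is an exact term in the chiral differential of the unit complex (hence annihilated), isolating the combination $a_1-\partial_z a_0$ rather than $a_1$ alone; this is exactly the chain-level reason behind Fay's variational formula for the analytic torsion.
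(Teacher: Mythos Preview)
Your overall strategy coincides with the paper's one-line proof (``same as before, use the self-loop diagram''): split $\tilde T_\mu$ into the quadratic piece $\mu^z_{\bar z}\,\beta_z\partial_z\gamma\,dz\,d\bar z$ and the scalar correction, apply Proposition~\ref{IndependentChoice} to reduce to $e^{\mathfrak{Q}_{\mathbf{reg}}}\tau^z_U(\tilde T_\mu)$, and read off the constant term from the single self-contraction with the regular kernel $\mathfrak{Q}$. The handling of the correction term and of the $1/\pi$ normalisation is fine.

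The flaw is in your derivation of $a_1-\partial_z a_0$. The claim that ``$\partial_z(\mathfrak{Q}|_{\mathrm{diag}})$ is $d^{\mathrm{ch}}_\omega$-exact and hence killed by $\mathrm{tr}_\omega$'' is not correct: for a single insertion $\mathrm{tr}_\omega$ is just (a multiple of) $\int_X$ on $\Omega^{1,1}(X)$, and $\int_X \mu^z_{\bar z}\,\partial_z a_0\,dz\wedge d\bar z$ has no reason to vanish, since $\mu$ depends on $z$. No exactness argument is needed. The combination $a_1-\partial_z a_0$ falls out directly from the self-loop once you track which argument of the Szeg\"o kernel carries $\gamma$ versus $\beta$: in Fay's expansion
\[
P(z_1,z_2;F)=\frac{\mathrm{Id}\cdot dz}{z_1-z_2}+a_0(z_1)+a_1(z_1)(z_2-z_1)+\cdots
\]
the $F$-point (that is, $\gamma$) sits at $z_1$. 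Hence the $\partial$ in $\gamma\otimes\partial$ acts on the $z_1$-slot of $\mathfrak{Q}$, and together with the sign coming from the right $\mathcal{D}_X$-action on $\omega_X$ one obtains $-\partial_{z_1}\mathfrak{Q}(z_1,z_2)\big|_{z_1=z_2=z}=a_1-\partial_z a_0$ on the nose (this is exactly your own computation $\partial_{z_1}\mathfrak{Q}|_{\mathrm{diag}}=\partial_z a_0-a_1$, with the $\mathcal{D}$-module sign supplying the overall minus). So your final formula is right, but the mechanism is the asymmetry of Fay's expansion in $(z_1,z_2)$ combined with the right $\mathcal{D}$-module sign, not a cancellation of exact terms under $\mathrm{tr}_\omega$.
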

\begin{proof}
    The proof is the same as the one in the previous section. We have the self-loop diagram (see Fig. \ref{Tcurrent}).
\begin{figure}
    \centering

\tikzset{every picture/.style={line width=0.75pt}} 

\begin{tikzpicture}[x=0.75pt,y=0.75pt,yscale=-1,xscale=1]

\draw   (185.97,81.32) .. controls (223.32,66.35) and (262.9,105.13) .. (308.3,102.36) .. controls (353.7,99.59) and (444.51,26.19) .. (464.3,114.83) .. controls (484.09,203.47) and (333.91,166.07) .. (294.33,160.53) .. controls (254.75,154.99) and (197.61,189.35) .. (172.1,173) .. controls (146.59,156.64) and (148.62,96.29) .. (185.97,81.32) -- cycle ;
\draw  [draw opacity=0] (435.57,115.61) .. controls (430.54,125.46) and (418.82,132.72) .. (404.94,133.39) .. controls (389.77,134.14) and (376.62,126.79) .. (371.76,115.97) -- (403.99,105.77) -- cycle ; \draw   (435.57,115.61) .. controls (430.54,125.46) and (418.82,132.72) .. (404.94,133.39) .. controls (389.77,134.14) and (376.62,126.79) .. (371.76,115.97) ;
\draw  [draw opacity=0] (378.91,126.44) .. controls (383.04,120.53) and (391.52,116.36) .. (401.38,116.14) .. controls (412.3,115.89) and (421.68,120.55) .. (425.4,127.31) -- (401.67,134.16) -- cycle ; \draw   (378.91,126.44) .. controls (383.04,120.53) and (391.52,116.36) .. (401.38,116.14) .. controls (412.3,115.89) and (421.68,120.55) .. (425.4,127.31) ;
\draw  [draw opacity=0] (240.35,123.97) .. controls (235.87,134.81) and (224.52,142.89) .. (210.99,143.63) .. controls (196.34,144.43) and (183.69,136.36) .. (179.37,124.59) -- (210.09,114.7) -- cycle ; \draw   (240.35,123.97) .. controls (235.87,134.81) and (224.52,142.89) .. (210.99,143.63) .. controls (196.34,144.43) and (183.69,136.36) .. (179.37,124.59) ;
\draw  [draw opacity=0] (187.01,135.06) .. controls (190.72,128.5) and (198.98,123.8) .. (208.63,123.56) .. controls (219.21,123.29) and (228.27,128.45) .. (231.57,135.84) -- (208.9,142.43) -- cycle ; \draw   (187.01,135.06) .. controls (190.72,128.5) and (198.98,123.8) .. (208.63,123.56) .. controls (219.21,123.29) and (228.27,128.45) .. (231.57,135.84) ;
\draw  [fill={rgb, 255:red, 74; green, 144; blue, 226 }  ,fill opacity=1 ] (294.76,137.75) .. controls (294.76,136.47) and (295.75,135.43) .. (296.97,135.43) .. controls (298.2,135.43) and (299.19,136.47) .. (299.19,137.75) .. controls (299.19,139.03) and (298.2,140.07) .. (296.97,140.07) .. controls (295.75,140.07) and (294.76,139.03) .. (294.76,137.75) -- cycle ;
\draw [color={rgb, 255:red, 184; green, 233; blue, 134 }  ,draw opacity=1 ][fill={rgb, 255:red, 255; green, 255; blue, 255 }  ,fill opacity=1 ][line width=2.25]    (299.08,138) .. controls (380.08,171) and (422.08,99) .. (299.02,137.27) ;

\draw (317,109.4) node [anchor=north west][inner sep=0.75pt]  [font=\footnotesize]  {$\beta $};
\draw (344,152.4) node [anchor=north west][inner sep=0.75pt]  [font=\footnotesize]  {$\partial \gamma $};
\draw (270,126.4) node [anchor=north west][inner sep=0.75pt]  [font=\footnotesize]  {$T_{\mu }$};

\end{tikzpicture}
    \caption{$\langle T_{\mu}\rangle$}
    \label{Tcurrent}
\end{figure}
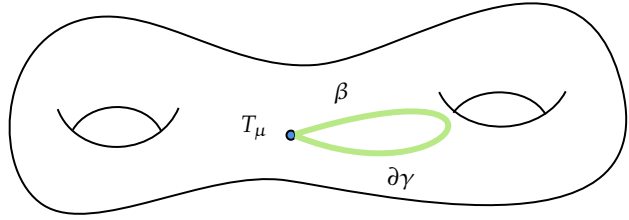
\end{proof}

\begin{rem}
The above formula recovers the first term of \cite[pp60,(3.27)]{fay1992kernel}. There Fay calculates the variation of the analytic torsion along the moduli space of curves.
\end{rem}
\section{Appendix}

\subsection{ Vertex algebra bundles}\label{VOAappendix}
We review the notion of vertex algebra and the construction of vertex algebra bundles over a smooth complex curve. See \cite{frenkel2004vertex,kac1998vertex}  for more details.

\begin{defn}

A vertex algebra is the following data:
\begin{itemize}
  \item the space of states: a vector space $\mathbf{V}$,
  \item the vacuum vector: a vector $|0\rangle \in \mathbf{V}$,
  \item the state-field correspondence: a linear map
of $\mathbf{V}$ to the space of fields, $a \mapsto Y(a, z)=\sum_{n \in \mathbb{Z}} a_{(n)} z^{-n-1}$,
satisfying the following axioms:
\end{itemize}

(translation covariance): $[T, Y(a, z)]=\partial Y(a, z)$,
where $T \in \mathrm{End} \mathbf{V}$ is defined by
$T(a)=a_{(-2)}|0\rangle$;

(vacuum): $Y(|0\rangle, z)=\mathrm{Id}_{\mathbf{V}},\left.Y(a, z)|0\rangle\right|_{z=0}=a$;

(locality): $(z-w)^{N} Y(a, z) Y(b, w)$
$$
=(z-w)^{N} Y(b, w) Y(a, z) \text { for } N \gg 0 .
$$

\end{defn}

Let $\mathcal{O}=\mathbb{C}[[z]]$. A conformal vertex algebra $\mathbf{V}$ is equipped with an action of $\mathrm{Der}_0\mathcal{O}=z\mathbb{C}[[z]]\partial_z$ under the correspondence $z^{j+1}\partial_z\mapsto-L_j$. Let us assume that the $\mathbb{Z}$-grading of $\mathbf{V}$ induced by the operator $L_0$ is bounded below. Then the Lie algebra $\mathrm{Der}_+\mathcal{O}=z^2\mathbb{C}[[z]]\partial_z$ is locally nilpotent as each of its elements has a negative degree.

Now suppose that we have an element in $\mathrm{Aut}(\mathcal{O})$ which is expressed as a power series
$$
z\mapsto f(z)=a_1z+a_2z^2+\dots,\quad a_1\neq 0.
$$

We can find $v_i\in \mathbb{C}, i\geq 0$, such that
$$
f(z)=\exp{(\sum_{i>0}v_iz^{i+1}\cdot \partial_z)}v_0^{z\partial_z}\cdot z.
$$

We define a linear operator $R(f)\in\mathrm{End}(\mathbf{V})$ by
$$
R(f)=\exp{(-\sum_{i>0}v_iL_i)}v_0^{-L_0}.
$$

We have a filtration $\mathbf{V}_{\leq m}$ of $\mathbf{V}$ be finite dimensional $\mathrm{Aut}(\mathcal{O})$-submodules. Hence we can define $\mathcal{V}$ as inductive limit of vector bundle $\mathcal{V}_i:=\mathcal{A}ut_X\times_{\mathrm{Aut}(\mathcal{O})}V_i.$

We can now describe the local coordinate change formula for $\mathcal{V}$ explicitly. When $D_x$ is a small analytic disc around $x$. Then the local coordinate $z$ on $D_x$ such that $z(x)=0$ induces a local coordinate $z_y:=z-z(y)$ at each point $y\in D_x$ satisfying $z_y(y)=0$. If we choose a new local coordinate $w=\rho(z)$. Then $w_y(t)=w(t+y)-w(y)=\rho(t+y)-\rho(y)=:\rho_y(t)$. We then have the coordinate change formula
$$
(z_y,v)\sim (w_y,R(\rho_y)^{-1}v).
$$

Now we turn to general twisting construction. Let $\mathfrak{g}$ be a simple Lie algebra. Suppose there is an injection $\alpha:\mathfrak{g}\rightarrow V$ such that the Fourier coefficients of the vertex operator $Y(\alpha(A),z), A\in \mathfrak{g}$, generate an action of $\widehat{\mathfrak{g}}$ on $V$. Furthermore, we assume that the conformal structure of $\mathbf{V}$ is compatible with the action of $\widehat{\mathfrak{g}}$. This means that the actions of $\mathrm{Vir}$ and $\widehat{\mathfrak{g}}$ combine into the action of the semi-direct product $\mathrm{Vir}\ltimes \widehat{\mathfrak{g}}$.

Suppose that the action of $\mathfrak{g}(\mathcal{O})$ on $\mathbf{V}$ can be exponentiate to a $G(\mathcal{O})-$action. Then we have an action of the semi-direct product $\mathrm{Aut}(\mathcal{O})\ltimes G(\mathcal{O}).$ Let $\mathcal{P}$ be a principal $G$-bundle on $X$. There is a principal $\mathrm{Aut}(\mathcal{O})\ltimes G(\mathcal{O})$-bundle $\widehat{P}$ over $X$ whose fiber at $X$ is $\widehat{\mathcal{P}}_x:=\{(z,s)| z\ \text{is a formal coordinate at } \ x, s \text{is a trivialization of }\ \mathcal{P}|_{D_x}\}.$ Then we can construct a vertex algebra bundle
$$
\mathcal{V}:=\widehat{P}\times_{\mathrm{Aut}(\mathcal{O})\ltimes G(\mathcal{O})}\mathbf{V}.
$$
For any element $\sigma(t)\in G(\mathcal{O})$, we can find
$$
\exp({\sum_{n\geq 0,i=1,\dots,\dim \mathfrak{g}} f^i_n(y)t^nE_i})=\sigma(y+t).
$$
Here $\{E_i\}_{i=1,\dots,\dim\mathfrak{g}}$ is a basis of $\mathfrak{g}$.
Define
$$
R(\sigma_y)=\exp(-{\sum_{n\geq 0,i=1,\dots,\dim \mathfrak{g}} f^i_n(y) J_{i,n}}),
$$
where $J_i$ is the affine Kac-Moody current corresponding to $E_i\in \mathfrak{g}$.

Similar to the discussion before,  we have
$$
(z_y,s;v)\sim (z_y,\tilde{s};R(\sigma_y)^{-1}v).
$$
Finally, we discuss a bit about the case of non-integral gradation following \cite[pp130, Section 7.3.9]{frenkel2004vertex}. Suppose that our vertex algebra is $\frac{1}{r}\mathbb{Z}$-graded by $L_0$ and we have a r-spin structure on $X$, that is, an r-cover of the frame bundle $\mathrm{Fr}_X$
$$
\pi:\mathcal{P}_{r-\mathrm{spin}}\rightarrow \mathrm{Fr}_X.
$$
First of all, we can still form a bundle over $\mathrm{Fr}_X$ by twisting
$$
\mathcal{V}_+=\mathcal{A}ut_X\times_{\mathrm{Aut}_+(\mathcal{O})}\mathbf{V}.
$$
Where $\mathrm{Aut}_+\mathcal{O}=\{z+a_2z^2+\dots\}\subset \mathrm{Aut}\mathcal{O}$. Using the r-spin structure we can pull back this bundle to $\mathcal{P}_{r-\mathrm{spin}}$ and do further $\mathbb{C}^{\times}$-twist
$$
\mathcal{V}:=\pi^*\mathcal{V}_+/\{(p\cdot t,v)\sim (p,t^{-rL_0}v)\}.
$$

\subsection{Coordinate change
formulas for chiral Weyl algebras}\label{VAbundleChiral}

We want to show that the chiral enveloping algebra is isomorphic to the chiral algebra constructed from the vertex algebra bundle
$$
\mathscr{U}(\mathcal{L})^{\flat}\simeq \mathcal{V}^r.
$$
To prove this, we first notice that locally their chiral algebra structure are the same. This can be seen from the Wick theorem that will be proved in the next section. Since both $\mathscr{U}(\mathcal{L})^{\flat}$ and $\mathcal{V}^r$ are generated by linear fields, we only need to show that they have the same coordinate change formula on linear fields. Furthermore,  the formula in \cite[pp118,(6.6.1)]{frenkel2004vertex}
$$
\rho'(z)(\partial_w+L_{-1})R(\rho_z)^{-1}(-)=R(\rho_z)^{-1}L_{-1}(-)
$$
matches of the coordinate change formula on $\mathscr{U}(\mathcal{L})^{\flat}$ side.

This means that we can further reduce to comparing the bundle $E\subset \mathscr{U}(\mathcal{L})$ and the bundle of primary fields in $\mathcal{V}^r$ which are isomorphic by construction.
\subsection{Proof of the Wick theorem}\label{WickThm}

We will prove Theorem \ref{WickTheoremChiral}. A choice of a local coordinate induces
 a local isomorphism
 $$
\tau^z_U:\mathcal{A}|_U\rightarrow \mathrm{Sym}\ \mathcal{L}|_{U}.
 $$
$$
\frac{n_1!\cdot e_{i_1}}{(t-z)^{n_1+1}}\cdots \frac{n_k!\cdot e_{i_k}}{(t-z)^{n_k+1}}|0\rangle\cdot dz\leftrightarrow    ( e_{i_1}\otimes \partial_z^{n_1}\otimes dz^{-1}\cdots e_{i_k}\otimes \partial_z^{n_k}\otimes dz^{-1})\cdot dz.
$$

Recall the diagram
 $$
 \mathbf{U}_{X}\boxtimes  \mathbf{U}_{X}\xrightarrow{c} U(\mathcal{L}^{\natural}_{X^2})/U(\mathcal{L}^{\natural}_{X^2})p^*_2\mathcal{L}^{\natural}_X\otimes U(\mathcal{L}^{\natural}_{X^2})/U(\mathcal{L}^{\natural}_{X^2})p^*_1\mathcal{L}^{\natural}_X\xleftarrow{\iota} \mathbf{U}_{X^2}.$$

First notice that
\begin{equation}
    \label{WickId1}
\iota(v_1\otimes v_2)=c(e^{-\mathfrak{P}_{\mathbf{sing}}}v_1\boxtimes v_2).
\end{equation}

 Here we write
$$
v_1\otimes v_2=(\frac{a_1}{(t-z_1)^{k_1+1}} \cdots  \frac{a_n}{(t-z_1)^{k_n+1}})\otimes (\frac{b_1}{(t-z_2)^{l_1+1}} \cdots  \frac{b_m}{(t-z_2)^{l_m+1}})\in \mathbf{U}_{X^2}.
$$

To prove (\ref{WickId1}), we need to compute
$$
\iota(v_1\otimes v_2)=
$$
$$
\sum_{I\subset \{1,\dots,n\}}(\prod_{i\notin I}\frac{a_i}{(t-z_1)^{k_i+1}})|0\rangle_{z_1}\boxtimes (\prod_{i \in I}\frac{a_i}{(t-z_1)^{k_i+1}})\cdot (\prod_{j=1}^m\frac{b_j}{(t-z_1)^{l_j+1}})|0\rangle_{z_2}.
$$

Exchanging $\frac{a_i}{(t-z_1)^{k_i+1}}$ and $\frac{b_j}{(t-z_1)^{l_j+1}}$ we get
$$
\frac{\langle a_i,b_j\rangle}{(t-z_1)^{k_i+1}(t-z_2)^{l_j+1}}|0\rangle_{z_2}=\frac{1}{l_j!}\cdot (-k_i-1)\cdot (-k_i-l_j)\cdot \frac{1}{(z_2-z_1)^{k_i+l_j+1}}
$$
$$
=\frac{(k_i+l_j)!}{k_i!l_j!}(-1)^{k_i+1}\frac{1}{(z_1-z_2)^{k_i+l_j+1}}
$$

While the Wick contraction gives the same result with a minus sign
$$
\frac{1}{k_i!l_j!}\cdot \partial^{k_i}_{z_1}\partial^{l_j}_{z_2}\frac{1}{z_1-z_2}=
\frac{(k_i+l_j)!}{k_i!l_j!}\cdot (-1)^{k_i}\cdot \frac{1}{(z_1-z_2)^{k_i+l_j+1}}
$$
and (\ref{WickId1}) is proved.

We can find $N$ sufficiently large such that
$$
\iota((z_1-z_2)^N\cdot e^{\mathfrak{P}^{\otimes}_{\mathbf{sing}}}v_1\otimes v_2)=c((z_1-z_2)^N \cdot v_1\boxtimes v_2).
$$
Here the operation $e^{\mathfrak{P}^{\otimes}_{\mathbf{sing}}}$ is defined in the same way as the operator $e^{\mathfrak{P}_{\mathbf{sing}}}$ by treating $\otimes$ as $\boxtimes$.

Then we have
$$
\mu(\eta\cdot v_1dz_1\boxtimes v_2 dz_2)=\mu_{{\omega}}(\frac{\eta}{(z_1-z_2)^N})\cdot ((z_1-z_2)^Ne^{\mathfrak{P}^{\otimes}_{\mathbf{sing}}}(v_1\otimes v_2))
$$
$$
=\mu_{\mathrm{Sym}}(\eta\cdot e^{\mathfrak{P}_{\mathbf{sing}}}v_1dz_1\boxtimes v_2dz_2)).
$$

\bibliographystyle{amsplain}
\providecommand{\bysame}{\leavevmode\hbox to3em{\hrulefill}\thinspace}
\providecommand{\MR}{\relax\ifhmode\unskip\space\fi MR }
\providecommand{\MRhref}[2]{%
  \href{http://www.ams.org/mathscinet-getitem?mr=#1}{#2}
}
\providecommand{\href}[2]{#2}

\end{document}